\documentclass[10pt,twoside,reqno]{amsart}
\usepackage{amsmath}
\usepackage{amssymb}
\usepackage{amsfonts}
\usepackage{amsthm}
\usepackage{graphicx}
\usepackage[initials]{amsrefs}
\usepackage{fancyhdr}
\usepackage{hyperref}

\usepackage{tikz}
\usepackage{tikz-cd}

\usepackage[all,cmtip]{xy}

\newtheorem{theorem}{Theorem}[section]
\newtheorem{corollary}[theorem]{Corollary}
\newtheorem{lemma}[theorem]{Lemma}
\newtheorem{proposition}[theorem]{Proposition}
\theoremstyle{definition}
\newtheorem{definition}[theorem]{Definition}
\newtheorem{example}[theorem]{Example}
\newtheorem{remark}[theorem]{Remark}

\begin{document}

\title[Finite Polyhedral Models]{\large Finite Polyhedral Models for the Space of Equivalent Norms}
\author[Acosta-Portilla Juan Rafael]{Juan Rafael Acosta-Portilla$^{1}$}
\date{May 2026}
\maketitle

\begin{center}
{\footnotesize
$^{1}$Instituto de Investigaciones y Estudios Superiores Económicos y Sociales, Universidad Veracruzana, México\\
E-mail: juaacosta@uv.mx\\
}
\end{center}

\bigskip

{\footnotesize
\noindent
{\bf Abstract.}
We study finite polyhedral models inside the projectivized space of equivalent norms $\mathcal{N}'(X)$ on a finite-dimensional Banach space $X$, endowed with the logarithmic distortion metric. Given a finite symmetric direction set $E$, we introduce the class of $\alpha E$-norms, defined as Minkowski functionals of symmetric polytopes whose vertices lie on the rays prescribed by $E$. We identify the admissible weights $\alpha$ for which this parametrization is non-redundant and prove that they give a one-to-one parametrization of the corresponding finite direction model $\mathcal{N}(E)$.
We then refine this description by introducing complete, symmetric, simplicial $E$-fans, which encode the conical regions on which the associated polyhedral norms are linear. For a fixed fan $\mathcal{F}$, we define the corresponding geometric and coordinate fan models $\mathcal{N}(\mathcal{F})$ and $\mathcal{R}(\mathcal{F})$, and show that they are naturally isomorphic as cones. After quotienting by positive scalar multiplication, the coordinate models become isometric to the corresponding norm models: the logarithmic distortion metric on $\mathcal{N}'(E)$ and $\mathcal{N}'(\mathcal{F})$ is represented exactly by the Hilbert projective metric on the admissible weight spaces.
Finally, we prove completeness results for these finite models and show how they provide finite-dimensional polyhedral approximations to the metric geometry of $\mathcal{N}'(X)$.

\noindent
{\bf Key Words and Phrases}:
Equivalent norms, polyhedral norms, finite polyhedral models, logarithmic distortion metric, Hilbert projective metric, polyhedral fans.

\noindent {\bf 2020 Mathematics Subject Classification}: 
46B20, 46B03, 52A21, 52B05, 54E35.
}

\bigskip

\section{Introduction}

Banach space geometry and renorming theory have traditionally focused on the study of geometric properties of individual norms. This point of view has led to a rich theory describing phenomena such as smoothness, convexity, rotundity, polyhedrality, and fixed point properties \cite{agarwal2009fixedpointbook, GoebelKirk1990TopicsFixedPoint, godefroy2001renormings, guirao2022renormings, kirk2002handbook}. However, once a Banach space $X$ is fixed, it is also natural to regard the collection of all equivalent norms on $X$ as a geometric object in its own right. This leads to the following general question:

\begin{center}
\emph{What is the structure of the space of equivalent norms?}
\end{center}

Let $X$ be a Banach space and let $\mathcal{N}(X)$ denote the family of equivalent norms on $X$. Since many geometric properties of a norm are invariant under multiplication by a positive scalar, it is natural to identify collinear norms. Thus we consider the projectivized space

\begin{equation}
\mathcal{N}'(X)=\mathcal{N}(X)/\sim,
\end{equation}

\noindent where $\|\cdot\|_1\sim \|\cdot\|_2$ if there exists $c>0$ such that $\|\cdot\|_2=c\|\cdot\|_1$. On this quotient, one may consider the logarithmic distortion metric

\begin{equation}
d\left(\overline{\|\cdot\|}_1,\overline{\|\cdot\|}_2\right)=\log\frac{u}{l},
\end{equation}

\noindent where $u\geq l>0$ are the optimal constants satisfying

\begin{equation}
l\|x\|_1\leq \|x\|_2\leq u\|x\|_1
\end{equation}

\noindent for all $x\in X$. This metric is a logarithmic version of the usual Banach--Mazur distortion between norms and gives a natural way to compare the shapes of their unit balls. Several related metrics and topological structures on spaces of equivalent norms have appeared in the literature \cite{TomczakJaegermann1989BanachMazurDA, dominguezPhoti2008porosity, dominguezPhoti2010genericityinsomebanach, fabianZajicekZizler1982residualityRotundNorms}. Nevertheless, the global metric geometry of $\mathcal{N}'(X)$ remains difficult to describe.

The main difficulty is that $\mathcal{N}'(X)$ is too large to admit a simple global finite-dimensional parametrization. Even in finite-dimensional spaces, the unit ball of an equivalent norm may be an arbitrary symmetric convex body with nonempty interior. Thus, instead of looking for a single global coordinate system, the present paper develops finite polyhedral models inside $\mathcal{N}'(X)$. These models are determined by finite sets of prescribed directions and by the polyhedral regions on which the corresponding norms are linear.

More precisely, let $X$ be finite-dimensional and let $E\subset X$ be a finite symmetric direction set. The set $E$ prescribes the possible directions in which the extreme points of a polyhedral unit ball may occur. We consider the family $\mathcal{N}(E)$ of all polyhedral norms whose extreme points lie on rays determined by $E$. This gives a finite direction model in the space of norms. However, the choice of $E$ alone does not determine a norm. One also needs radial data describing how far the unit ball extends in each prescribed direction. This leads to the notion of an $E$-admissible multiindex $\alpha$, and to the associated $\alpha E$-norm defined as the Minkowski functional of the symmetric polytope

\begin{equation}
\operatorname{conv}(\alpha E)=\operatorname{conv}\{\pm \alpha_1^{-1}x_1,\ldots,\pm \alpha_n^{-1}x_n\}.
\end{equation}

\noindent The admissibility condition removes the redundancy in this parametrization: it ensures that each prescribed direction is radially calibrated with the boundary of the associated polytope. In this way, the coordinate model $\mathcal{R}(E)$ of $E$-admissible multiindices gives a non-redundant parametrization of the finite direction model $\mathcal{N}(E)$.

A second layer of structure appears when one studies the regions of linearity of these norms. A polyhedral norm is linear on the cones generated by the faces of its unit ball. Therefore, in order to describe its local linear structure, one is naturally led to complete, symmetric, simplicial fans. Given a complete, symmetric, simplicial $E$-fan $\mathcal{F}$, we consider the subfamily $\mathcal{N}(\mathcal{F})$ of norms which are linear on every cone of $\mathcal{F}$. The corresponding coordinate model $\mathcal{R}(\mathcal{F})$ consists of those weights on $E$ which are compatible with the fixed fan. Thus the finite direction model records the allowed vertex directions, while the finite fan model records the conical regions of linearity.

The main results of the paper can be summarized as follows. First, we prove that every norm in $\mathcal{N}(E)$ is uniquely represented by an $E$-admissible multiindex. Equivalently, the map

\begin{equation}
\alpha \longmapsto \rho(\cdot;\alpha E)
\end{equation}

\noindent establishes a bijection between the coordinate direction model $\mathcal{R}(E)$ and the geometric direction model $\mathcal{N}(E)$. Second, we show that compatible fans provide a piecewise linear representation of $\alpha E$-norms: on each maximal cone of a compatible fan, the norm is given by a linear expression determined by the corresponding weights. Third, we prove that for a fixed fan $\mathcal{F}$, the coordinate fan model $\mathcal{R}(\mathcal{F})$ and the geometric fan model $\mathcal{N}(\mathcal{F})$ are isomorphic as cones.

The metric structure is also finite-dimensional. After passing to the quotient by positive scalar multiplication, the parametrization by admissible multiindices becomes an isometry. More precisely, if $\mathcal{R}'(E)$ denotes the projectivization of $\mathcal{R}(E)$ endowed with the Hilbert metric $d_H$, then the map

\begin{equation}
\phi'_E:(\mathcal{R}'(E),d_H)\longrightarrow(\mathcal{N}'(E),d)
\end{equation}

\noindent is an isometry. The same holds for the fan models associated with a fixed complete, symmetric, simplicial $E$-fan $\mathcal{F}$. Thus the logarithmic distortion metric on finite polyhedral models of norms is exactly represented by the Hilbert projective metric on the corresponding coordinate models.

Finally, we study the completeness and approximation properties of these finite models. The admissible coordinate models are closed under the relevant metric limits, and their images provide complete finite-dimensional submodels of $\mathcal{N}'(X)$. Moreover, in finite-dimensional spaces, polyhedral norms are dense in the space of equivalent norms with respect to the logarithmic distortion metric. Hence the global space $\mathcal{N}'(X)$ can be studied through an increasing family of finite polyhedral models. In this sense, the present work provides a finite-dimensional approximation scheme for the geometry of the space of equivalent norms.

The paper is organized as follows. Section 2 recalls the basic notions concerning equivalent norms, Minkowski functionals, polyhedral norms, fans, and the Hilbert projective metric. In Section 3 we introduce direction sets, $\alpha E$-norms, and $E$-admissible multiindices, and we prove the basic parametrization theorem for polyhedral norms with prescribed directions. Section 4 develops the local fan structure of $\alpha E$-norms and introduces admissible pairs $(\mathcal{F},\alpha)$. In Section 5 we define the finite direction, coordinate, and fan models, and prove their cone and isometric structures. Section 6 is devoted to completeness and density properties, showing how the finite polyhedral models fit into the global metric space of equivalent norms.

\section{Preliminaries}

In this section we introduce the basic notions and fix the notation used throughout the paper.

\subsection*{Families of equivalent norms}

Given a Banach space $X$, we denote by $\mathcal{N}(X)$ the family of equivalent norms. We say that $\| \cdot \|_1 \sim \| \cdot \|_2$ if there exists $c > 0$ such that $\| \cdot \|_2 = c \| \cdot \|_1$. We denote 

\begin{equation}\label{quotient of collinear norms}
\mathcal{N}'(X) = \mathcal{N}(X)/\sim,
\end{equation}

\noindent the family of equivalent norms modulo collinearity. For each $\| \cdot \|_1 , \| \cdot \|_2 \in \mathcal{N}(X)$ there exist sharp $u \geq l > 0$ such that

\begin{equation}
l \| x \|_1 \leq \| x \|_2 \leq u \| x\|_1
\end{equation}
\noindent for each $x \in X$.

The comparison between norms is naturally encoded by a logarithmic metric measuring their distortion. On the quotient space $\mathcal{N}'(X)$ we define

\begin{equation}\label{definition distance d}
d \left(\overline{\| \cdot \|}_1 , \overline{\| \cdot \|}_2\right) = \log \frac{u}{l}
\end{equation}

\noindent A detailed discussion of this metric can be found in \cite{dominguezPhoti2008porosity, dominguezPhoti2010genericityinsomebanach}.

\subsection*{Minkowski functional}

For a fixed $K \subset X$, the Minkowski functional is the extended real valued function

\begin{equation}
\rho (x ; K) = \inf \{r> 0 \mid x \in r K \}.
\end{equation}

\noindent In particular, $\rho( \, \cdot \, ; \operatorname{conv}(K))$ is a norm when $K$ is a finite symmetric set on a finite-dimensional space and $\operatorname{conv}(K)$ has nonempty interior.

\subsection*{Basics on simplex}

The following definitions and results can be found in the classical references by Schrijver \cite{Schrijver1998}, Ziegler \cite{ziegler1995polytopes}, and Klee \cite{Klee1959SomeCO}. By a polytope in $\mathbb{R}^n$ we mean the convex hull of finitely many points. A face of a polytope $P$ is the intersection of $P$ with a supporting hyperplane, and a facet is a maximal proper face with respect to inclusion. Every face of a polytope is itself a polytope, and a vertex is an extreme point. A $d$-simplex $\Delta^d$ is the convex hull of points ${x_0, \dots, x_d}$ such that ${x_1 - x_0, \dots, x_d - x_0}$ is linearly independent. For any $x \in \Delta^d$, its barycentric coordinates are the unique scalars $c_i \geq 0$ with $\sum c_i = 1$ such that

\begin{equation}
x = \sum_{i = 0}^d c_i x_i.  
\end{equation}   

\noindent The dimension of a polytope is the dimension of its affine hull. A triangulation of a $d$-dimensional polytope $P$ with vertex set ${x_1, \dots, x_n}$ is a collection of $d$-simplices $\Delta_i^d$ with vertices in ${x_1, \dots, x_n}$ such that $P = \bigcup_i \Delta_i^d$ and, for $i \neq j$, the intersection $\Delta_i^d \cap \Delta_j^d$ is a simplex of dimension less than $d$. It is well known that every polytope admits a triangulation \cite{deloera2010triangulation, ziegler1995polytopes}.

\subsection*{Basics on Fans}

The notions of polyhedral cones and fans used below are standard in polyhedral and toric geometry; see, for instance, \cite{CLS2011ToricVarieties,ziegler1995polytopes}.
A polyhedral cone in a finite-dimensional vector space $X$ is a set of the form

\begin{equation}
C = \operatorname{cone}(x_1,\dots,x_k) = \left\{ \sum_{i=1}^k \lambda_i x_i \, \middle| \, \lambda_i \ge 0 \right\}.
\end{equation}

\noindent A fan $\mathcal{F}$ in $X$ is a finite collection of polyhedral cones such that:

\begin{itemize}
\item[1)] If $C \in \mathcal{F}$ and $G$ is a face of $C$, then $G \in \mathcal{F}$.
\item[2)] For any $C_1, C_2 \in \mathcal{F}$, the intersection $C_1 \cap C_2$ is a face of both $C_1$ and $C_2$.
\end{itemize}

\noindent A fan $\mathcal{F}$ is called simplicial if every cone $C \in \mathcal{F}$ is generated by a linearly independent set of vectors. The fan $\mathcal{F}$ is said to be complete if

\begin{equation}
X= \bigcup_{C \in \mathcal{F}} C.
\end{equation}

\begin{lemma}\label{lema full dimension of generators of maximal cones on fan}
Let $X$ be a $d$-dimensional Banach space and $\mathcal{F}$ be a complete, simplicial fan in $X$. Then every maximal cone $C\in\mathcal{F}$ is generated by exactly $d$ linearly independent rays.
\end{lemma}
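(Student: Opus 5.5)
Since $\mathcal{F}$ is simplicial, each cone $C\in\mathcal{F}$ is generated by a linearly independent set, so it has at most $d$ generators, and $\dim\operatorname{span}(C)$ equals the number of generators. The claim therefore amounts to showing that every maximal cone (with respect to inclusion in $\mathcal{F}$) is full-dimensional, i.e.\ has nonempty interior in $X$. I would argue in two steps: first show that the union of the full-dimensional cones of $\mathcal{F}$ is all of $X$, and then show that a cone which is not full-dimensional is a proper face of a full-dimensional one, hence not maximal.

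\emph{Step 1.} Let $\mathcal{F}_{<d}$ be the cones of dimension less than $d$. Each such cone lies in a proper linear subspace, so $U=\bigcup_{C\in\mathcal{F}_{<d}} C$ is a finite union of nowhere dense closed sets and is nowhere dense (Baire, or simply measure zero). By completeness, $X\setminus U$ is covered by the full-dimensional cones. These cones are closed, so their union $V$ is closed. Since $X\setminus U$ is dense, it follows that $V=X$.

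\emph{Step 2.} Let $C\in\mathcal{F}$ with $\dim C<d$, and pick $x$ in the relative interior of $C$. By Step 1, $x\in D$ for some full-dimensional $D\in\mathcal{F}$. By fan axiom 2, $C\cap D$ is a face of $C$. It contains the relative interior point $x$, and the only face of $C$ meeting its relative interior is $C$ itself, so $C\cap D=C$ and $C\subseteq D$. Then $C=C\cap D$ is also a face of $D$, and it is proper because the dimensions differ. Hence $C$ is not maximal. So maximal cones have dimension $d$, and by simpliciality they are generated by exactly $d$ linearly independent vectors. These vectors span distinct rays, and the rays are determined by the cone, namely as its one-dimensional faces.

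\emph{Main obstacle.} The delicate point is Step 1. Completeness alone could a priori be witnessed by low-dimensional cones, so one needs the topological argument that finitely many closed cones covering $X$ must include full-dimensional ones covering everything. Step 2 is a routine use of the face axiom together with the standard fact that a face containing a relative interior point is the whole cone.
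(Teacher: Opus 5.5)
The paper does not prove this lemma. It is stated in the preliminaries as a standard fact about fans, with a pointer to Cox--Little--Schenck and Ziegler, so there is no argument of the author's to compare with. Your proposal supplies a correct, self-contained justification.

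In Step~1, each cone of dimension less than $d$ lies in a proper subspace, so $U$ is closed with empty interior. For a finite union this is elementary, so Baire is not needed. Completeness then places the dense set $X\setminus U$ inside the closed set $V$, and hence $V=X$.

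In Step~2, fan axiom 2 makes $C\cap D$ a face of $C$ containing a relative interior point, hence equal to $C$. This gives $C\subsetneq D$ with $D\in\mathcal{F}$, which already shows $C$ is not maximal; you do not need that $C$ is a face of $D$. The degenerate cone $\{0\}$ is also covered, since its relative interior is $\{0\}$ itself. Simpliciality then turns $\dim C=d$ into exactly $d$ linearly independent generators.

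You are right that Step~1 carries the real content. Completeness, as the paper defines it, is only a covering condition, and it is the closedness of finitely many cones that forces the full-dimensional ones to cover $X$.
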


\subsection*{Polyhedral norms}

The following characterization can be found in Klee \cite{Klee1960PoSec}. A norm $\|\cdot\|$ on a Banach space $X$ is said to be polyhedral if, for every finite-dimensional subspace $Y\subset X$, there exist functionals $x_1^*,\dots,x_n^*\in S_{X^*}$ such that, for every $y\in Y$,

\begin{equation}\label{polyhedral norm definition}
\|y\|=\max_{1\leq i\leq n}|x_i^*(y)|.
\end{equation}

\noindent This is a dual, or functional, characterization of polyhedrality. Since the present paper is concerned with finite-dimensional spaces, we shall use the equivalent geometric formulation: a norm is polyhedral if and only if its unit ball is a symmetric polytope with nonempty interior. Equivalently, the norm is the Minkowski functional of such a polytope.

\subsection*{Hilbert Metric}
\noindent We recall the classical Hilbert metric, see \cite{LemmensNussbaum2012, Bushell1973hilbert'sMetric}. For $x,y\in \mathbb{R}^k_{>0}$, we define

\begin{equation}
M(x,y):=\max_{1\leq i\leq k}\frac{x_i}{y_i}
\end{equation}

\noindent and

\begin{equation}
m(x,y):=\min_{1\leq i\leq k}\frac{x_i}{y_i}.
\end{equation}

\noindent Since the quotient $\frac{M(x,y)}{m(x,y)}$ is invariant under positive rescaling of $x$ and $y$, it defines a metric on the projective space

\begin{equation}\label{Proyective positive cone}
\mathbb{P}(\mathbb{R}^k_{>0})=\mathbb{R}^k_{>0}/\sim, \qquad x\sim y \iff y=\lambda x \text{ for some } \lambda>0.
\end{equation}

\noindent The Hilbert metric is then defined on $\mathbb{P}(\mathbb{R}^k_{>0})$ by

\begin{equation}
d_H([x],[y])=\log\left(\frac{M(x,y)}{m(x,y)}\right).
\end{equation}

The following classical results can be found in \cite{LemmensNussbaum2012}.

\begin{proposition}\label{proposition completeness of projective positive cone}
The metric space $(\mathbb{P}(\mathbb{R}^k_{>0}),d_H)$ is complete. 
\end{proposition}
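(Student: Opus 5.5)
The plan is to transport the problem to a familiar complete space. The map $\Psi:\mathbb{R}^k_{>0}\to\mathbb{R}^k$, $\Psi(x)=(\log x_1,\dots,\log x_k)$, turns ratios into differences. Writing $a=\Psi(x)$ and $b=\Psi(y)$, we have
\begin{equation*}
d_H([x],[y])=\max_i (a_i-b_i)-\min_i (a_i-b_i).
\end{equation*}
Positive rescaling $x\mapsto\lambda x$ becomes translation by $(\log\lambda)\mathbf{1}$, where $\mathbf{1}=(1,\dots,1)$. Hence $\Psi$ descends to a bijection
\begin{equation*}
\mathbb{P}(\mathbb{R}^k_{>0})\longrightarrow \mathbb{R}^k/\mathbb{R}\mathbf{1}.
\end{equation*}
Under this bijection $d_H$ corresponds to the variation seminorm $\omega(v)=\max_i v_i-\min_i v_i$ evaluated on $a-b$.

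The next step is to check that $\omega$ induces a genuine norm on the quotient. It is clearly a seminorm on $\mathbb{R}^k$. It is invariant under adding multiples of $\mathbf{1}$. Moreover, $\omega(v)=0$ holds exactly when all the $v_i$ are equal, that is, when $v\in\mathbb{R}\mathbf{1}$. So $\omega$ is a norm on the $(k-1)$-dimensional space $\mathbb{R}^k/\mathbb{R}\mathbf{1}$. This also confirms that $d_H$ is a metric.

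Completeness then follows from the fact that every norm on a finite-dimensional real vector space is complete, since all norms there are equivalent to the Euclidean one. Because $\Psi$ induces an isometry onto this normed space, $(\mathbb{P}(\mathbb{R}^k_{>0}),d_H)$ is complete. To make the step explicit, one can take a Cauchy sequence $[x^n]$ and pick representatives normalized by $x^n_1=1$, so that $a^n_1=0$. For such representatives $\|a^n-a^m\|_\infty\le\omega(a^n-a^m)$, since $a^n_1-a^m_1=0$ lies between the minimum and maximum. Therefore $a^n$ is Cauchy in $\mathbb{R}^k$ and converges to some $a$. Setting $x=\exp(a)\in\mathbb{R}^k_{>0}$, we get $d_H([x^n],[x])\le 2\|a^n-a\|_\infty\to 0$.

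There is no real obstacle in this argument. The one point needing care is that the limit stays inside the open cone rather than escaping to its boundary. This is automatic in logarithmic coordinates, because $\exp$ of any real vector is strictly positive.
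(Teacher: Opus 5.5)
Your proof is correct. The paper gives no proof of this proposition; it quotes it from Lemmens--Nussbaum, whose treatment of Hilbert's metric covers general cones, not just the positive orthant.

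Your argument is self-contained and specific to $\mathbb{R}^k_{>0}$. The coordinatewise logarithm identifies $(\mathbb{P}(\mathbb{R}^k_{>0}),d_H)$ isometrically with $\mathbb{R}^k/\mathbb{R}\mathbf{1}$ under the variation norm $\omega(v)=\max_i v_i-\min_i v_i$. Completeness then comes from finite-dimensionality. Your explicit check is also sound: you normalize to $x_1=1$ and use $\|v\|_\infty\le\omega(v)$ whenever $v_1=0$.

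The citation buys generality. Your route buys transparency and an extra result. The same normalization and inequality show that $d_H([x^n],[x])\to 0$ forces the representatives with first coordinate $1$ to converge coordinatewise. Rescaling to coordinate sum $1$ is continuous. So your argument also proves Proposition~\ref{proposition normalized representatives converge}, which the paper likewise only cites and then relies on in its completeness proofs.
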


\begin{proposition}\label{proposition normalized representatives converge}
Let $(\overline{\alpha}_n)$ be a sequence in $(\mathbb{P}(\mathbb{R}^k_{>0}),d_H)$ converging to $\overline{\alpha}$. Choose representatives $\alpha_n=(\alpha_1^n,\dots,\alpha_k^n)\in\overline{\alpha}_n$ and $\alpha=(\alpha_1,\dots,\alpha_k)\in\overline{\alpha}$ such that

\begin{equation}
\sum_{i=1}^k \alpha_i^n=1, \qquad\sum_{i=1}^k \alpha_i=1.
\end{equation}

\noindent Then

\begin{equation}
\lim_{n\to\infty}\alpha_i^n=\alpha_i, \qquad i=1,\dots,k.
\end{equation}
\end{proposition}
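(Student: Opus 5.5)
The plan is to make the Hilbert distance between the normalized representatives control each coordinate ratio $\alpha_i^n/\alpha_i$ directly. Since the quotient $M(x,y)/m(x,y)$ does not change under positive rescaling of $x$ and $y$, we may compute $d_H(\overline{\alpha}_n,\overline{\alpha})$ with the normalized representatives chosen in the statement. Set
\begin{equation}
M_n:=M(\alpha_n,\alpha)=\max_{1\leq i\leq k}\frac{\alpha_i^n}{\alpha_i},\qquad m_n:=m(\alpha_n,\alpha)=\min_{1\leq i\leq k}\frac{\alpha_i^n}{\alpha_i}.
\end{equation}
The hypothesis $d_H(\overline{\alpha}_n,\overline{\alpha})\to 0$ says exactly that $M_n/m_n\to 1$.

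The key step is to use the normalization to place $1$ between $m_n$ and $M_n$. Write
\begin{equation}
1=\sum_{i=1}^k\alpha_i^n=\sum_{i=1}^k\frac{\alpha_i^n}{\alpha_i}\,\alpha_i .
\end{equation}
This is a convex combination of the ratios $\alpha_i^n/\alpha_i$, with weights $\alpha_i>0$ satisfying $\sum_i\alpha_i=1$. A convex combination lies between the smallest and largest of the numbers being combined, so $m_n\leq 1\leq M_n$ for every $n$.

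Combining the two facts gives
\begin{equation}
1\leq M_n\leq \frac{M_n}{m_n}\qquad\text{and}\qquad \frac{m_n}{M_n}\leq m_n\leq 1 .
\end{equation}
Since $M_n/m_n\to 1$, both $M_n\to 1$ and $m_n\to 1$. Every ratio $\alpha_i^n/\alpha_i$ lies in $[m_n,M_n]$, so $\alpha_i^n/\alpha_i\to 1$ for each $i$. Multiplying by the fixed positive number $\alpha_i$ gives $\alpha_i^n\to\alpha_i$ for $i=1,\dots,k$.

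There is no real obstacle here. The only point needing care is the sandwich $m_n\leq 1\leq M_n$: without the normalization, the ratios could all tend to a common constant $c\neq 1$, and the representatives would converge only up to scaling.
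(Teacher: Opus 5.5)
Your proof is correct and complete. The paper itself gives no proof of this proposition; it states it as a classical fact and refers to \cite{LemmensNussbaum2012}. So your argument replaces that citation with a self-contained proof rather than varying an argument in the text.

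The standard cone-theoretic result proves something more general. On a cross-section $\{x\in K^\circ : \varphi(x)=1\}$ of a cone by a strictly positive linear functional, the Hilbert-metric topology coincides with the norm topology. It rests on the observation you isolate: $m\,y\leq x\leq M\,y$ in the cone order, together with $\varphi(x)=\varphi(y)$, forces $m\leq 1\leq M$. You specialize this to $\mathbb{R}^k_{>0}$ with $\varphi(x)=\sum_i x_i$, where the sandwich reduces to the elementary fact about convex combinations.

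What your route buys is transparency and an explicit rate. Since $M_n\leq M_n/m_n$ and $m_n\geq m_n/M_n$, you actually obtain
\[
|\alpha_i^n-\alpha_i|\leq \bigl(e^{d_H(\overline{\alpha}_n,\overline{\alpha})}-1\bigr)\alpha_i, \qquad i=1,\dots,k.
\]
The same argument also works verbatim for normalization by any order-preserving, positively homogeneous functional, for instance $\max_i x_i=1$. What the citation buys is generality to arbitrary cones, which the paper does not need here.
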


\section{Direction sets and $\alpha E$-norms}

The purpose of this section is to introduce a finite-dimensional parametrization of polyhedral norms through sets of directions and associated weights.

\begin{definition}[Direction set]
Let $(X,\|\cdot\|)$ be a finite-dimensional Banach space. A set $E\subset S_X$ is called a finite symmetric direction set, or simply a direction set, if $E$ is finite, $E=-E$, and $\operatorname{conv}(E)$ has nonempty interior. After choosing one representative from each antipodal pair, we write

\begin{equation}
E=\{\pm x_1,\dots,\pm x_n\}.
\end{equation}
\end{definition}

\noindent The elements of $E$ should be understood as the prescribed directions in which the vertices of the associated polyhedral unit balls may occur. We shall use the same symbol $E$ both for the symmetric set and, after fixing representatives of antipodal pairs, for the corresponding ordered list

\begin{equation}
E = (x_1 , \dots, x_n, -x_1, \dots, -x_n).
\end{equation}

\noindent We denote by $\operatorname{Dir}(E)$ the set of positive rays generated by $E$,

\begin{equation}
\operatorname{Dir}(E)=\{cx \mid c>0, x \in E\}.
\end{equation}

\noindent For a convex set $B\subset X$, we denote by $\operatorname{Ext}(B)$ the set of extreme points of $B$.

\noindent For the purpose of simplifying the calculations, we will introduce the following notation: If $\alpha= (\alpha_1 , \dots , \alpha_n)$ is a multiindex and $E = (x_1 , \dots , x_n , - x_1 , \dots , -x_n)$ is a direction set, then we define

\begin{equation}
\alpha E := (\alpha^{-1}_1 x_1 , \dots , \alpha^{-1}_n x_n , -\alpha^{-1}_1 x_1 , \dots , -\alpha^{-1}_n x_n)
\end{equation}

\noindent and

\begin{equation}
\operatorname{conv}(\alpha E) = \operatorname{conv} \{\pm \alpha^{-1}_1 x_1 , \dots , \pm \alpha^{-1}_n x_n \}
\end{equation}

\noindent In addition we will use the partial order $(\alpha_1 , \dots , \alpha_n) \leq (\beta_1 , \dots , \beta_n)$ if $\alpha_i \leq \beta_i$ for each $i=1 , \dots , n$.

The following construction allows us to encode norms using finite geometric data. 

\begin{definition}[$\alpha E$-norm]
If $\alpha > 0$, then the $\alpha E$-norm on $X$ is defined by 

\begin{equation}
\rho(x  ; \alpha  E) = \inf \{ r > 0 \mid x \in r \operatorname{conv}(\alpha E)\}.
\end{equation}
\end{definition}

\noindent The intended meaning is that $\rho(\, \cdot \, ;\alpha E)$ should make $x_i$ have length $\alpha_i$. This may fail if the point $\alpha_i^{-1}x_i$ is not exposed on the radial boundary of $\operatorname{conv}(\alpha E)$; moreover, different weights may define the same norm. The admissibility condition below removes this redundancy.

\begin{definition}[$E$-admissible multiindex]
We say that a multiindex $\alpha=(\alpha_1, \dots , \alpha_n)> 0$ is $E$-admissible if for every $ 0<\varepsilon_i < \alpha_i$, $i = 1 , \dots , n$ and $\alpha' = (\alpha_1 , \dots , \alpha_i - \varepsilon_i , \dots , \alpha_n)$ we have that 

\begin{equation}
\rho( \, \cdot \, ;\alpha'  E ) \neq \rho(\, \cdot \, ; \alpha  E)
\end{equation}
\end{definition}

\noindent Intuitively, $\alpha$ is $E$-admissible when the finite family $\alpha E$ is radially calibrated with its own convex hull. In every prescribed direction $x_i$, the point $\alpha_i^{-1}x_i$ lies exactly on the radial boundary of $\operatorname{conv}(\alpha E)$, 
or equivalently

\begin{equation}
\rho(x_i;\alpha E)=\alpha_i.
\end{equation}

\noindent Thus no coordinate of $\alpha$ can be decreased without changing the resulting polyhedral unit ball.

\begin{example}
Let $X = \mathbb{R}^2$, $e_3 = \frac{1}{2}(e_1 + e_2)$ and $E = \{ \pm e_1 , \pm e_2, \pm e_3 \}$. For each $\delta > 0$ we define $\alpha_\delta = (1 , 1, \delta)$. Then for $\delta \geq 1$

\begin{equation}
\rho((x, y);\alpha_\delta E)=                                                                                                                                                                                                                                                                                                                                                                                                                                                                                                                                                                                                                                                                                                                                                                                                                                                                                                                                                                                                                                                                                                                                                                                                                                                                                                                                                                                                                                                                                                                                                                                                                                                                                                                                                         |x|+|y|.
\end{equation}

\noindent  and for $0 < \delta < 1$

\begin{equation}
\rho((x,y);\alpha_\delta E)
=
\begin{cases}
|x|+(2\delta-1)|y|, & xy\geq 0 \text{ and } |x|\geq |y|, \\[2mm]
(2\delta-1)|x|+|y|, & xy\geq 0 \text{ and } |x|\leq |y|, \\[2mm]
|x|+|y|, & xy\leq 0.
\end{cases}
\end{equation}
\end{example}

The next result establishes the basic properties of the $\alpha E$-norms, showing that they define equivalent norms and can be reduced to admissible parameters.

\begin{lemma}\label{lemma properties of E-admissible multiindex1}
Let $X$ be a finite-dimensional Banach space, $E =( x_1 , \dots , x_n , - x_1 , \dots , -x_n )$ be a direction set and $\alpha=(\alpha_1 , \dots , \alpha_n)>0$. Then:

\begin{itemize}
\item[A)] The $\alpha E$-norm is an equivalent norm, $\rho(\, \cdot \, ; \alpha E) \in \mathcal{N}(X)$.
\item[B)] For each $i = 1 , \dots, n $ we have that $\rho(x_i ; \alpha E) \leq \alpha_i$.
\item[C)] Up to replacing $\alpha$ by another multiindex defining the same norm, $\alpha$ may be assumed $E$-admissible. More precisely, there exists an $E$-admissible multiindex $\alpha'$ such that $\rho(\, \cdot \,;\alpha' E)=\rho(\, \cdot \,;\alpha E)$.
\end{itemize}
\end{lemma}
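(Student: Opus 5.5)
For A), I will write $P_\alpha=\operatorname{conv}(\alpha E)$ and first show that $P_\alpha$ is a compact, convex, symmetric set containing $0$ in its interior. Symmetry holds because $E=-E$. Compactness holds because $P_\alpha$ is the convex hull of finitely many points. For the interior, note that $\operatorname{conv}(E)$ has nonempty interior, so $E$ spans $X$. Hence the vectors $\alpha_i^{-1}x_i$ also span $X$. Choosing a basis $\{\alpha_{i_1}^{-1}x_{i_1},\dots,\alpha_{i_d}^{-1}x_{i_d}\}$ among them, the cross-polytope $\operatorname{conv}\{\pm\alpha_{i_j}^{-1}x_{i_j}\}\subset P_\alpha$ contains a neighbourhood of $0$. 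The standard properties of Minkowski functionals then give that $\rho(\cdot;\alpha E)$ is a norm:
\begin{itemize}
\item finiteness and positive homogeneity come from absorbency;
\item subadditivity comes from convexity;
\item evenness comes from symmetry;
\item definiteness comes from boundedness.
\end{itemize}
Since $\dim X<\infty$, every norm is equivalent to $\|\cdot\|$, so $\rho(\cdot;\alpha E)\in\mathcal N(X)$. Alternatively, one can use explicit constants: $rB_X\subset P_\alpha\subset RB_X$ with $R=\max_i\alpha_i^{-1}$.

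For B), the inequality is immediate. Since $\alpha_i^{-1}x_i\in P_\alpha$, we have $x_i\in \alpha_i P_\alpha$, and therefore $\rho(x_i;\alpha E)\le\alpha_i$.

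For C), the plan is to define $\alpha'_i:=\rho(x_i;\alpha E)$, which is positive by A), and then prove two claims:
\begin{itemize}
\item[(i)] $P_{\alpha'}=P_\alpha$, so the norms coincide;
\item[(ii)] $\alpha'$ is $E$-admissible.
\end{itemize}

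For (i), B) gives $\alpha'\le\alpha$, so $\alpha_i^{-1}x_i\in\operatorname{conv}\{\pm\alpha_i'^{-1}x_i\}$ and hence $P_\alpha\subset P_{\alpha'}$. Conversely, $\rho(\alpha_i'^{-1}x_i;\alpha E)=1$, and $P_\alpha$ is closed, so $\alpha_i'^{-1}x_i\in P_\alpha$. Together with symmetry this gives $P_{\alpha'}\subset P_\alpha$. Thus $\rho(x_i;\alpha'E)=\rho(x_i;\alpha E)=\alpha'_i$ for every $i$.

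For (ii), fix $i$ and $0<\varepsilon<\alpha'_i$, and let $\beta$ be $\alpha'$ with its $i$-th entry replaced by $\alpha'_i-\varepsilon$. By B) applied to $\beta$, $\rho(x_i;\beta E)\le\alpha'_i-\varepsilon<\alpha'_i=\rho(x_i;\alpha' E)$. Hence $\rho(\cdot;\beta E)\neq\rho(\cdot;\alpha'E)$, which is exactly the admissibility condition. (Read literally, the definition perturbs one coordinate at a time, and the argument works coordinate by coordinate.)

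The main obstacle is minor: in (i), one must verify that the boundary point $\alpha_i'^{-1}x_i$ actually belongs to $P_\alpha$. This requires the infimum in the Minkowski functional to be attained, which follows from compactness of the polytope. Everything else is routine convexity.
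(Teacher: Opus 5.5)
Your proof is correct and follows essentially the paper's route. The paper's $\alpha_i-\varepsilon_i$, defined via a supremum, is exactly your $\rho(x_i;\alpha E)$, and both arguments use compactness of $\operatorname{conv}(\alpha E)$ to place $\rho(x_i;\alpha E)^{-1}x_i$ in the polytope. The differences are cosmetic: you get $0\in\operatorname{int}\operatorname{conv}(\alpha E)$ from a basis cross-polytope instead of the inclusion $s^{-1}\operatorname{conv}(E)\subset\operatorname{conv}(\alpha E)$, and you verify admissibility by evaluating at $x_i$ via B) rather than through a strict inclusion of convex hulls.
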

\begin{proof}
$A)$. Let $s = \max \{ \alpha_1 , \cdots , \alpha_n\}$. We note that 

\begin{equation}
\begin{split}
s^{-1} \cdot \operatorname{conv}( E ) & = \operatorname{conv}\{ \pm s^{-1} x_1 , \cdots , \pm s^{-1} x_n \} \\
                  &  \subset \operatorname{conv}\{ \pm \alpha^{-1}_1 x_1 , \cdots , \pm \alpha^{-1}_n x_n \} \\
                  & = \operatorname{conv}(\alpha E).
\end{split}
\end{equation}

\noindent Since $\operatorname{conv}(E)$ has nonempty interior and the fact that $\operatorname{conv}\{ \pm \alpha^{-1}_1 x_1 , \cdots , \pm \alpha^{-1}_n x_n \}$ is a symmetric convex set on a finite-dimensional space, we conclude that $\rho(\, \cdot \, ; \alpha E)$ is an equivalent norm.

$B)$. Since $\alpha^{-1}_i x_i \in \operatorname{conv}(\alpha E)$ and $\rho(x_i ; \alpha E)^{-1} = \sup \{r > 0 \mid r x_i \in \operatorname{conv}(\alpha E) \}$, then $ \rho(x_i ; \alpha E)^{-1} \geq \alpha^{-1}_i$.

$C)$. Let $\varepsilon_i = \sup \{  \varepsilon \in [0 , \alpha_i) \mid  (\alpha_i - \varepsilon)^{-1} x_i \in \operatorname{conv}(\alpha E)  \}$ for each $i = 1 , \dots ,n $. By the compactness of $\operatorname{conv}(\alpha E)$ we have $\varepsilon_i < \alpha_i$ and $(\alpha_i - \varepsilon_i)^{-1} x_i \in \operatorname{conv}(\alpha E)$. Note that $[0 , \alpha^{-1}_ix_i] \subset [0 , (\alpha_i - \varepsilon_i)^{-1} x_i]$. Then $\alpha' = (\alpha_1 - \varepsilon_1, \cdots ,  \alpha_n - \varepsilon_n)$ satisfies 

\begin{equation}
\operatorname{conv}\{ \pm (\alpha_1 - \varepsilon_1)^{-1} x_1 , \cdots , \pm (\alpha_n - \varepsilon_n)^{-1} x_n \} = \operatorname{conv}\{\pm \alpha^{-1}_1 x_1 , \cdots , \pm \alpha^{-1}_n x_n\}.
\end{equation}

\noindent Thus $\rho( \, \cdot \, ;\alpha'  E ) = \rho( \, \cdot \, ;\alpha  E )$ and for any $0< \eta_i < \alpha'_i$ we have that 
$(\alpha'_i - \eta_i)^{-1}x_i \notin \operatorname{conv}(\alpha E)$. That is, 
$\alpha^* =(\alpha'_1 , \dots , \alpha'_i - \eta_i, \dots, \alpha'_n)$ satisfies 

\begin{equation}
\operatorname{conv}(\alpha' E) \subsetneq \operatorname{conv}(\alpha^* E).
\end{equation}

\noindent Then $\alpha'$ is $E$-admissible.
\end{proof}

The following characterization identifies the geometric meaning of admissibility in terms of the position of the vertices of the associated convex body.

\begin{lemma}[Characterization of $E$-admissibility]\label{lemma properties of E-admissible multiindex}
Let $X$ be a finite-dimensional Banach space, $E = ( x_1 , \dots , x_n , - x_1 , \dots , -x_n )$ be a direction set and $\alpha=(\alpha_1 , \dots , \alpha_n) > 0$. Then the following statements are equivalent:

\begin{itemize}
\item[1)] $\alpha$ is $E$-admissible.
\item[2)] For each $0 < \alpha' \leq \alpha$ with $\alpha' \neq \alpha$, we have that $\operatorname{conv}(\alpha' E) \supsetneq \operatorname{conv}(\alpha E)$.
\item[3)] For each $0 < \alpha' \leq \alpha$ with $\alpha' \neq \alpha$, we have that $\rho(\, \cdot \, ; \alpha' E) \neq \rho(\, \cdot \, ; \alpha E)$.
\item[4)] For every $i = 1 , \dots , n$ we have that $\rho(x_i ; \alpha E) = \alpha_i$.
\item[5)] For any $\varepsilon>0$ and $i =1 , \dots,n $ we have that $(\alpha^{-1}_i + \varepsilon) x_i \notin \operatorname{conv}(\alpha E)$.
\item[6)] Every $\alpha^{-1}_i x_i$ is on the boundary of $\operatorname{conv}(\alpha E)$ for $i = 1 , \dots , n $.
\item[7)] There exists a norm $\| \cdot \|$ for $X$ such that $\alpha^{-1}_i x_i \in S_{\| \cdot \|}$ for each $i = 1 , \dots , n $. 
\item[8)] There exists a norm $\| \cdot \|$ for $X$ such that $\| x_i \| = \alpha_i$ for each $i = 1 , \dots , n$
\end{itemize}
\end{lemma}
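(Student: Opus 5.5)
The plan is to prove the cycle $1)\Rightarrow 4)\Rightarrow 2)\Rightarrow 3)\Rightarrow 1)$. Separately, we link $4)\Leftrightarrow 5)\Leftrightarrow 6)$ and $6)\Leftrightarrow 7)\Leftrightarrow 8)$. Throughout we use that $K_\alpha:=\operatorname{conv}(\alpha E)$ is a symmetric compact convex body with $0$ in its interior (Lemma \ref{lemma properties of E-admissible multiindex1} A)). Hence $K_\alpha=\{x \mid \rho(x;\alpha E)\le 1\}$, and its boundary is exactly $\{\rho(\,\cdot\,;\alpha E)=1\}$.

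For $1)\Rightarrow 4)$ we argue by contraposition, using Lemma \ref{lemma properties of E-admissible multiindex1} B). Suppose $\rho(x_i;\alpha E)=\beta_i<\alpha_i$ for some $i$. Then $\beta_i^{-1}x_i\in K_\alpha$, so replacing $\alpha_i$ by $\beta_i$ adds a point that already lies in $K_\alpha$. Moreover the old vertex $\alpha_i^{-1}x_i$ lies on the segment $[0,\beta_i^{-1}x_i]$, so it stays in the new hull. Hence the hull, and therefore the norm, is unchanged, which contradicts admissibility with $\varepsilon_i=\alpha_i-\beta_i$.

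For $4)\Rightarrow 2)$, let $0<\alpha'\le\alpha$. Then $\alpha_j'^{-1}x_j$ lies beyond $\alpha_j^{-1}x_j$ on the same ray, so $K_\alpha\subset K_{\alpha'}$. If some $\alpha'_j<\alpha_j$, then $\rho(\alpha_j'^{-1}x_j;\alpha E)=\alpha_j/\alpha'_j>1$ by 4). Thus $\alpha_j'^{-1}x_j\notin K_\alpha$ and the inclusion is strict. For $2)\Rightarrow 3)$, distinct bodies give distinct Minkowski functionals, since each body is recovered as the sublevel set $\{\rho\le 1\}$. Finally $3)\Rightarrow 1)$ is the special case $\alpha'=(\alpha_1,\dots,\alpha_i-\varepsilon_i,\dots,\alpha_n)$.

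Next we treat $4)\Leftrightarrow 5)\Leftrightarrow 6)$. By the definition of the Minkowski functional and compactness of $K_\alpha$, $\rho(x_i;\alpha E)^{-1}=\max\{r>0 \mid rx_i\in K_\alpha\}$. So 4) says this maximum is $\alpha_i^{-1}$, which is exactly 5). Also 4) says $\rho(\alpha_i^{-1}x_i;\alpha E)=1$, i.e.\ $\alpha_i^{-1}x_i\in\partial K_\alpha$, which is 6). The converse directions use the same identity $\partial K_\alpha=\{\rho=1\}$.

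For $6)\Rightarrow 7)$ we take $\|\cdot\|=\rho(\,\cdot\,;\alpha E)$. The step $7)\Leftrightarrow 8)$ is homogeneity: $\|\alpha_i^{-1}x_i\|=1$ if and only if $\|x_i\|=\alpha_i$. The only slightly delicate step, and the one I expect to need care, is $7)\Rightarrow 6)$, because the norm in 7) is arbitrary. The argument goes as follows. The closed unit ball $B$ of $\|\cdot\|$ is convex and symmetric and contains all $\pm\alpha_i^{-1}x_i$, so $K_\alpha\subset B$. If some $\alpha_i^{-1}x_i$ were in the interior of $K_\alpha$, it would be in the interior of $B$, hence $\|\alpha_i^{-1}x_i\|<1$. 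This contradicts $\alpha_i^{-1}x_i\in S_{\|\cdot\|}$. Hence each $\alpha_i^{-1}x_i$ lies on $\partial K_\alpha$, which is 6), and the proof is complete.
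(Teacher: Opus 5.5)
Your proof is correct and uses the same ingredients as the paper. These are the contrapositive replacement of $\alpha_j$ by $\rho(x_j;\alpha E)$ (the paper's $3)\Rightarrow 4)$, which you aim directly at 1)), the monotonicity $\operatorname{conv}(\alpha E)\subset\operatorname{conv}(\alpha' E)$ for $\alpha'\le\alpha$, and the inclusion $\operatorname{conv}(\alpha E)\subset B_{\|\cdot\|}$ for any norm as in 7). The differences are only organizational: you make 4) the hub, obtaining 2) from $\rho((\alpha'_j)^{-1}x_j;\alpha E)=\alpha_j/\alpha'_j>1$ rather than the paper's nesting through a one-coordinate perturbation $\alpha^*$, and you close the loop with $7)\Rightarrow 6)$ via $\partial\operatorname{conv}(\alpha E)=\{\rho(\,\cdot\,;\alpha E)=1\}$ instead of the paper's $7)\Rightarrow 1)$.
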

\begin{proof}
Set $\alpha'= (\alpha_1 - \varepsilon_1 , \cdots, \alpha_n - \varepsilon_n)$ for some $0 \leq \varepsilon_i < \alpha_i $ with $\sum \varepsilon^2_i >0$.

$1)$ implies $2)$. Without loss of generality we may assume that $\varepsilon_1 > 0$ and we define $\alpha^* = (\alpha_1 - \varepsilon_1 , \alpha_2 , \cdots , \alpha_n)$. Since $\alpha^{-1}_1 < (\alpha_1 - \varepsilon_1)^{-1}$, then $\operatorname{conv}(\alpha^* E) \supset \operatorname{conv}(\alpha E)$. On the other hand $\alpha$ is $E$-admissible, then  satisfies $\rho(\, \cdot  \, ; \alpha^* E) \neq \rho(\, \cdot  \, ; \alpha E)$. Thus $\operatorname{conv}(\alpha^* E) \supsetneq \operatorname{conv}(\alpha E)$. Hence $\operatorname{conv}(\alpha' E) \supset \operatorname{conv}(\alpha^* E) \supsetneq \operatorname{conv}(\alpha E)$.

$2)$ implies $3)$. By definition $\rho(\, \cdot \, ; \alpha' E)$ and $\rho(\, \cdot \, ; \alpha E)$ are the respective Minkowski functionals over $\operatorname{conv}(\alpha' E)$ and $\operatorname{conv}(\alpha E)$, thus $\operatorname{conv}(\alpha' E) \supsetneq \operatorname{conv}(\alpha E)$ implies $\rho(\, \cdot \, ; \alpha' E) \neq\rho(\, \cdot \, ; \alpha E)$.

$3)$ implies $1)$ is trivial. Now we prove $3)$ implies $4)$ by a contrapositive argument. By definition, for any $i =1 , \dots , n $ we have that $\alpha^{-1}_i x_i \in \operatorname{conv}(\alpha E)$. Then $\rho(x_i ; \alpha E) \leq \alpha_i$. 
We suppose $\neg 4)$. That is, for some $j =1 , \dots , n $, it is fulfilled $0 < \alpha'_j = \rho(x_j ; \alpha E)  < \alpha_j$. 
Then by the Minkowski functional definition and the compactness of $\operatorname{conv}(\alpha E)$, we have, $(\alpha'_j)^{-1} x_j \in \operatorname{conv}(\alpha E)$. Set $\alpha' = (\alpha_1 , \dots, \alpha_{j-1} , \alpha'_j , \alpha_{j+1} , \dots , \alpha_n)$. Then $0 < \alpha' \leq \alpha$ with $\alpha' \neq \alpha$. 
Since $[0, \alpha^{-1}_jx_j] \subset
[0 , (\alpha'_j)^{-1} x_j]$, we have $\operatorname{conv}(\alpha E) \subset \operatorname{conv}(\alpha' E)$. On the other hand 
$(\alpha'_j)^{-1}x_j \in \operatorname{conv}(\alpha E) $ implies $\operatorname{conv}(\alpha' E) \subset \operatorname{conv}(\alpha E)$. 
Then $\rho(\, \cdot \, ; \alpha' E) = \rho(\, \cdot \, ; \alpha E)$.

$4)$ implies $5)$. Since for each $i =1 , \dots , n $ we have $\rho(x_i ; \alpha E) = \alpha_i$, then for each $0 < \varepsilon_i < \alpha_i$ we have that $x_i \notin (\alpha_i - \varepsilon_i) \operatorname{conv}(\alpha E)$, that is, $(\alpha_i - \varepsilon_i)^{-1} x_i \notin \operatorname{conv}(\alpha E)$. We define $\varepsilon'_i = (\alpha_i - \varepsilon_i)^{-1} - \alpha^{-1}_i$. Thus $(\alpha^{-1}_i + \varepsilon'_i)x_i \notin \operatorname{conv}(\alpha E)$ for each $i = 1 , \dots , n $, note that as $\varepsilon_i$ ranges over $(0,\alpha_i)$, the number $\varepsilon_i'$ ranges over $(0,\infty)$.

$5)$ implies $6)$ is trivial. $6)$ implies $7)$. Let $\| \cdot \| = \rho(\, \cdot \, ; \alpha E)$, by hypothesis $\alpha^{-1}_i x_i \in S_\rho = S_{\| \cdot \|}$.

$7)$ equivalent to $8)$ is trivial. Finally we prove $7)$ implies $1)$. Since $\|\alpha^{-1}_i x_i\| =1$, then $\operatorname{conv}(\alpha E) \subset B_{\| \cdot \|}$ and for each $\varepsilon > 0$ we have that $(\alpha^{-1}_i + \varepsilon) x_i \notin B_{\| \cdot \|} $. Hence $(\alpha^{-1}_i + \varepsilon) x_i \notin \operatorname{conv}(\alpha E)$ and $\alpha^{-1}_i x_i$ is on the boundary of $\operatorname{conv}(\alpha E)$ for each $i = 1 , \dots , n $, that is, for each $0 < \varepsilon_i < \alpha_i$ and $\alpha' = (\alpha_1 , \dots , \alpha_i - \varepsilon_i , \dots , \alpha_n)$ it is fulfilled $\operatorname{conv}(\alpha' E) = \operatorname{conv}( \{\pm \alpha^{-1}_1 x_1 , \dots , \pm (\alpha_i - \varepsilon_i)^{-1}  x_i , \dots \pm \alpha^{-1}_n x_n \} ) \supsetneq \operatorname{conv}(\alpha E)$. Then $\rho(\, \cdot \, ; \alpha' E) \neq \rho(\, \cdot \, ; \alpha E)$.
\end{proof}

\begin{remark}\label{remark non redundancy in representation of alpha E norms}
For a fixed direction set $E = (x_1,\dots,x_n,-x_1,\dots,-x_n)$, the admissibility condition
removes the redundancy in the parametrization. Indeed, if $\alpha$ and $\beta$
are $E$-admissible and

\begin{equation}
\rho(\,\cdot\,;\alpha E)=\rho(\,\cdot\,;\beta E),
\end{equation}

\noindent then

\begin{equation}
\alpha_i=\rho(x_i;\alpha E)=\rho(x_i;\beta E)=\beta_i
\end{equation}

\noindent for every $i=1,\dots,n$. Hence $\alpha=\beta$.
\end{remark}

\begin{theorem}\label{theorem fixed E representation}
Let $X$ be a finite-dimensional Banach space, $E$ be a direction set and $p\in\mathcal{N}(X)$. Then the following statements are equivalent:

\begin{itemize}
\item[1)] $\operatorname{Ext}(B_p)\subset \operatorname{Dir}(E)$.
\item[2)] There exists an $E$-admissible multiindex $\alpha$ such that

\begin{equation}
p(x)=\rho(x;\alpha E), \qquad x\in X.
\end{equation}
\end{itemize}
\end{theorem}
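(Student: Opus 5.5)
We prove the two implications separately, using the Minkowski-functional description of norms together with Lemma \ref{lemma properties of E-admissible multiindex1} and Lemma \ref{lemma properties of E-admissible multiindex}.

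For 2) $\Rightarrow$ 1), assume $p=\rho(\,\cdot\,;\alpha E)$. Then $B_p=\operatorname{conv}(\alpha E)$, because $\operatorname{conv}(\alpha E)$ is a compact convex symmetric body and so equals the closed unit ball of its own Minkowski functional. The extreme points of the convex hull of a finite set lie in that set. Hence
\begin{equation}
\operatorname{Ext}(B_p)\subset\{\pm\alpha_i^{-1}x_i\}\subset\operatorname{Dir}(E),
\end{equation}
since $\alpha_i^{-1}>0$ and $-x_i\in E$. Admissibility is not even needed for this direction.

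For 1) $\Rightarrow$ 2), assume $\operatorname{Ext}(B_p)\subset\operatorname{Dir}(E)$.

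First we must know that $B_p$ is a polytope, i.e.\ that $\operatorname{Ext}(B_p)$ is finite. This is the delicate point, and the hypothesis forces it. Each ray $\{cx_i : c>0\}$ meets $S_p$ in exactly one point, namely $p(x_i)^{-1}x_i$. Every extreme point of $B_p$ lies on $S_p$, since an interior point is a midpoint of a segment in $B_p$. Therefore each extreme point lies in
\begin{equation}
\{\pm p(x_i)^{-1}x_i : i=1,\dots,n\},
\end{equation}
a finite set. By Krein--Milman (Minkowski's theorem in finite dimension), $B_p=\operatorname{conv}(\operatorname{Ext}(B_p))$.

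Next we define $\alpha_i=p(x_i)>0$. Since each point $\pm\alpha_i^{-1}x_i$ lies in $B_p$, we get
\begin{equation}
B_p=\operatorname{conv}(\operatorname{Ext}(B_p))\subset\operatorname{conv}(\alpha E)\subset B_p.
\end{equation}
Hence $B_p=\operatorname{conv}(\alpha E)$, and therefore $p=\rho(\,\cdot\,;\alpha E)$.

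Finally, $p(\alpha_i^{-1}x_i)=1$, so $\alpha_i^{-1}x_i\in S_p$ for every $i$. This is condition 7) of Lemma \ref{lemma properties of E-admissible multiindex}, so $\alpha$ is $E$-admissible. Alternatively, $\rho(x_i;\alpha E)=p(x_i)=\alpha_i$ gives condition 4).

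The only step I expect to need care is the finiteness of $\operatorname{Ext}(B_p)$ via the one-point-per-ray observation. The rest is bookkeeping with Minkowski functionals.
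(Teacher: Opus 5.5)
Your proof is correct and follows the paper's argument. For 1)$\Rightarrow$2) you set $\alpha_i=p(x_i)$, note that the rays through $\pm x_i$ meet $S_p$ only at $\pm\alpha_i^{-1}x_i$, conclude $B_p=\operatorname{conv}(\alpha E)$ via Minkowski's theorem, and get admissibility from Lemma \ref{lemma properties of E-admissible multiindex}; 2)$\Rightarrow$1) is the extreme-points-of-a-finite-hull observation, as in the paper. Your explicit sandwich $\operatorname{conv}(\operatorname{Ext}(B_p))\subset\operatorname{conv}(\alpha E)\subset B_p$ justifies the equality that the paper asserts in a single line.
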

\begin{proof}
We assume $E=(x_1 , \dots, x_n , -x_1 , \dots , - x_n)$. For every $i = 1 , \dots, n$ we define $\alpha_i = p(x_i)$. Then $\|\pm \alpha^{-1}_i x_i\| = 1$. That is, every $\pm \alpha^{-1}_i x_i$ lies on the boundary of $B_p$. Since $\operatorname{Dir}(E) \cap S_p = \{\pm \alpha^{-1}_1 x_1 , \dots , \pm \alpha^{-1}_n x_n \}$, 

\begin{equation}
\operatorname{Ext}(B_p) \subset \{\pm \alpha^{-1}_1 x_1 , \dots , \pm \alpha^{-1}_n x_n \}.
\end{equation}

\noindent By hypothesis

\begin{equation}
B_p = \operatorname{conv}(\operatorname{Ext}(B_p)) = \operatorname{conv}\{\pm \alpha^{-1}_1 x_1 , \dots , \pm \alpha^{-1}_n x_n \} = \operatorname{conv}(\alpha E).
\end{equation}

\noindent Hence the Minkowski functional $\rho(\, \cdot \, ; \alpha E)$ satisfies

\begin{equation}
p(x) = \rho(x; \alpha E) , \qquad x \in X
\end{equation}

\noindent and by Lemma \ref{lemma properties of E-admissible multiindex} $\alpha$ is $E$-admissible.

Conversely. $\rho( \, \cdot \, ; \alpha E)$ is the Minkowski functional of $\operatorname{conv}\{ \pm \alpha^{-1}_1 x_1 , \dots , \pm \alpha^{-1}_n x_n\}$. Then $\operatorname{Ext}(B_p)\subset \operatorname{Dir}(E)$.
\end{proof}

\begin{remark}
Thus, for a fixed direction set $E$, the $E$-admissible multiindices provide a non-redundant parametrization of the polyhedral norms whose 
extreme points lie in the directions prescribed by \(E\).
\end{remark}

\begin{corollary}[Combinatorial–metric representation of polyhedral norms]\label{corollary representation of polyhedral norms}
Let $(X,\|\cdot\|)$ be a finite-dimensional Banach space.

\begin{itemize}
\item[1)] Let $E = (x_1,\dots,x_n,-x_1,\dots,-x_n)$ be a direction set and let $\alpha = (\alpha_1,\dots,\alpha_n) > 0$ be an $E$-admissible multiindex. Then the Minkowski functional
    
\begin{equation}
\rho(\,\cdot\,;\alpha E)
\end{equation}
    
\noindent defines a polyhedral norm on $X$ satisfying

\begin{equation}
\rho(x_i;\alpha E) = \alpha_i, \qquad \text{for each } i=1,\dots,n.
\end{equation}

\item[2)] Conversely, for every polyhedral norm $p$ on $X$, there exist a direction set $E$ and an $E$-admissible multiindex $\alpha$ such that
    
\begin{equation}
p(x) = \rho(x;\alpha E), \qquad \text{for all } x \in X.
\end{equation}
\end{itemize}
\end{corollary}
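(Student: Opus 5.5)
Part 1) comes straight from Lemma \ref{lemma properties of E-admissible multiindex1} A) and Lemma \ref{lemma properties of E-admissible multiindex}. By part A), $\rho(\,\cdot\,;\alpha E)$ is an equivalent norm. Its unit ball is
\begin{equation}
\operatorname{conv}(\alpha E)=\operatorname{conv}\{\pm\alpha_1^{-1}x_1,\dots,\pm\alpha_n^{-1}x_n\}.
\end{equation}
This set is compact and convex, so it is its own closed unit ball. It is a symmetric polytope, being the convex hull of a finite symmetric set. It has nonempty interior because it contains $s^{-1}\operatorname{conv}(E)$ with $s=\max_i\alpha_i$. By the geometric formulation of polyhedrality recalled in Section 2, the norm is therefore polyhedral. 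Since $\alpha$ is $E$-admissible, the implication $1)\Rightarrow 4)$ of Lemma \ref{lemma properties of E-admissible multiindex} gives $\rho(x_i;\alpha E)=\alpha_i$ for every $i$.

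For part 2), I would build the direction set from the vertices of $B_p$. Since $p$ is polyhedral on the finite-dimensional space $X$, $B_p$ is a symmetric polytope with nonempty interior. Hence $V=\operatorname{Ext}(B_p)$ is finite, symmetric ($V=-V$), and $B_p=\operatorname{conv}(V)$. A direction set must lie in $S_X$, the unit sphere of the fixed norm $\|\cdot\|$, so I normalize and set
\begin{equation}
E=\{v/\|v\| \mid v\in V\}.
\end{equation}
This set is finite and symmetric. Its convex hull has nonempty interior: $\operatorname{conv}(E)\supseteq r\operatorname{conv}(V)=rB_p$ with $r=\min_{v\in V}\|v\|^{-1}>0$, using that $0\in\operatorname{conv}(E)$ by symmetry. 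Thus $E$ is a direction set.

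By construction, every extreme point of $B_p$ is a positive multiple of an element of $E$, so $\operatorname{Ext}(B_p)\subset\operatorname{Dir}(E)$. Theorem \ref{theorem fixed E representation}, implication $1)\Rightarrow 2)$, then gives an $E$-admissible $\alpha$ with $p=\rho(\,\cdot\,;\alpha E)$. Concretely, after choosing representatives $E=(x_1,\dots,x_n,-x_1,\dots,-x_n)$, one has $\alpha_i=p(x_i)=\|v_i\|^{-1}$ for the vertex $v_i$ in direction $x_i$.

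The only delicate step is checking that $E$ is a legitimate direction set. This requires the normalization into $S_X$, which is why $E$ is taken as normalized vertices rather than the vertices themselves. It also requires the nonempty-interior property of $\operatorname{conv}(E)$, handled by the scaling comparison above. Everything else is a direct appeal to the earlier results.
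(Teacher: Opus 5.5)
Your proposal is correct and follows essentially the same route as the paper. Part 1) comes from Lemma \ref{lemma properties of E-admissible multiindex1} A) together with the implication $1)\Rightarrow 4)$ of the admissibility characterization, and part 2) normalizes the vertices of $B_p$ into $S_X$ to form $E$ and then invokes Theorem \ref{theorem fixed E representation}. Your explicit check that $rB_p\subseteq\operatorname{conv}(E)$, and your use of the actual extreme points rather than an arbitrary generating set (which avoids repeated rays), only fill in details the paper states without proof.
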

\begin{proof}
The first assertion follows from Lemma \ref{lemma properties of E-admissible multiindex1} and Lemma \ref{lemma properties of E-admissible multiindex}.

\noindent For the converse, let $p$ be a polyhedral norm on $X$. Since $p$ is polyhedral, its unit ball $B_p=\{x\in X : p(x)\leq 1\}$ is a symmetric polytope with nonempty interior. Hence there exist vectors $v_1,\dots,v_n \in X$ such that

\begin{equation}
B_p=\operatorname{conv}\{\pm v_1,\dots,\pm v_n\}.
\end{equation}

\noindent For each $i=1,\dots,n$, set 

\begin{equation}
x_i = \| v_i \|^{-1} v_i.
\end{equation}

\noindent We define $E=(x_1,\dots,x_n,-x_1,\dots,-x_n)$. Note that $E \subset S_{\| \cdot \|}$, $E$ is symmetric and $\operatorname{conv}(E)$ has nonempty interior. Then $E$ is a direction set and 

\begin{equation}
\operatorname{Ext}(B_p) \subset \{\pm v_1,\dots,\pm v_n\} \subset \operatorname{Dir}(E)
\end{equation}

\noindent Thus by Theorem \ref{theorem fixed E representation} there exists an $E$-admissible multiindex $\alpha$ such that 

\begin{equation}
p(x) = \rho(x ; \alpha E), \qquad x \in X.
\end{equation}
\end{proof}

\section{Local fan structure of $\alpha E$-norms}\label{section:local Fan structure}

In this section we describe the local structure of $\alpha E$-norms through fan decompositions, which provide piecewise linear representations of the norm.

The direction set $E$ determines the possible rays on which the vertices of the unit ball may lie, and an admissible multiindex $\alpha$ determines the corresponding radial positions. However, this information does not explicitly record the regions on which the associated polyhedral norm is linear. These regions are conical and are determined by the facial structure of the polytope $\operatorname{conv}(\alpha E)$. Fans provide a convenient language for encoding this local piecewise linear structure.

\begin{definition}[$E$-fan and compatibility]
We say that a fan $\mathcal F$ is an $E$-fan if every cone $C\in\mathcal F$ is generated by rays determined by elements of $E$.

Let $\mathcal{F}$ be an $E$-fan. We say that $\mathcal{F}$ is compatible with the $\alpha E$-norm if $\rho(\, \cdot \, ; \alpha E)$ is linear on every $C \in \mathcal{F}$.
\end{definition}

The following result provides a local representation of the norm in terms of conical coordinates.

\begin{theorem}\label{theorem local fan form of alpha E norms}
Let $X$ be a $d$-dimensional Banach space, $E = (x_1 , \dots , x_n , -x_1 , \dots , -x_n)$ be a direction set and $\alpha = (\alpha_1 , \dots , \alpha_n)>0$ be an $E$-admissible multiindex. Then there exists a complete, symmetric, simplicial $E$-fan $\mathcal{F}$ compatible with $\rho(\, \cdot \, ; \alpha E)$. Moreover, if

\begin{equation}
C = \operatorname{cone}(x_{n_1} , \dots , x_{n_d}) \in \mathcal{F}
\end{equation}

\noindent is a maximal cone, then, for every 

\begin{equation}
x = \sum_{k=1}^d c_k x_{n_k}, \qquad c_k \geq 0
\end{equation}

\noindent one has

\begin{equation}
\rho(x  ; \alpha E) = \sum_{k= 1}^d c_k \alpha_{n_k}.
\end{equation}
\end{theorem}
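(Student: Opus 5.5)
The plan is to build the fan from a triangulation of the boundary of the polytope $P=\operatorname{conv}(\alpha E)$ and then read off the linear formula from barycentric coordinates. Since $\alpha$ is $E$-admissible, Lemma \ref{lemma properties of E-admissible multiindex} (item 4) gives $\rho(x_i;\alpha E)=\alpha_i$. Hence every point $\pm\alpha_i^{-1}x_i$ lies on the boundary $\partial P$, and the vertices of $P$ form a subset of these points. By Lemma \ref{lemma properties of E-admissible multiindex1} A), $P$ is a symmetric polytope with nonempty interior containing $0$ in its interior.

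The first step is to triangulate the facets of $P$ compatibly and symmetrically. Each facet $F$ is a $(d-1)$-polytope. I would triangulate each facet using only vertices of $P$ lying in $F$, and I would choose the triangulations so that two facets induce the same triangulation on their common face and so that the triangulation of $-F$ is the negative of that of $F$. A concrete way to do this is a pulling (placing) triangulation. Order the vertices $\pm v$ in antipodal pairs; each facet contains at most one point of each antipodal pair, since $F$ and $-F$ are disjoint for a facet of a centrally symmetric body with $0$ in its interior. Then pulling vertices in a fixed order gives a regular triangulation of $\partial P$. It restricts consistently to common faces, and it is symmetric because the pulling order is symmetric under $v\mapsto -v$, while $F$ and $-F$ never compete for the same vertex. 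This symmetry and compatibility bookkeeping is where I expect the main obstacle. An alternative is to triangulate one representative from each pair $\{F,-F\}$ by induction on faces, beginning with lower-dimensional faces, and then transport each triangulation by $-1$.

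Next I would define the fan. Let $\mathcal F$ consist of the cones $\operatorname{cone}(\sigma)$, where $\sigma$ ranges over all simplices of the boundary triangulation, including all faces and $\{0\}$. Each simplex $\sigma\subset\partial P$ spans an affine hull that misses $0$, because $\sigma$ lies in a supporting hyperplane not through $0$. Its vertices are therefore linearly independent, so each cone is simplicial. The intersection property of the triangulation transfers to the cones by radial projection: the map $x\mapsto x/\rho(x;\alpha E)$ is a homeomorphism from $X\setminus\{0\}$ onto $\partial P$. The same projection gives completeness, since every ray meets $\partial P$ in some simplex. Symmetry comes from the symmetric triangulation. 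Each vertex $\alpha_i^{-1}x_i$ generates the same ray as $x_i$, so $\mathcal F$ is an $E$-fan. Its maximal cones come from $(d-1)$-simplices, so they have $d$ generators, consistent with Lemma \ref{lema full dimension of generators of maximal cones on fan}.

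Finally I would prove the linear formula, which also establishes compatibility. Let $C=\operatorname{cone}(x_{n_1},\dots,x_{n_d})$ be maximal, with simplex $\sigma=\operatorname{conv}\{\alpha_{n_k}^{-1}x_{n_k}\}$ (signs absorbed into the labels) contained in a facet $F$. Take the supporting functional $f$ with $f\equiv 1$ on $F$ and $f\le 1$ on $P$; then $\rho(\cdot;\alpha E)\ge f$ everywhere, with equality on $\operatorname{cone}(F)$. Now write $x=\sum_k c_kx_{n_k}$ with $c_k\ge 0$. Then $\rho(x;\alpha E)=f(x)=\sum_k c_k f(x_{n_k})=\sum_k c_k\alpha_{n_k}\,f(\alpha_{n_k}^{-1}x_{n_k})=\sum_k c_k\alpha_{n_k}$. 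This shows the norm is linear on $C$, and hence on all of its faces.
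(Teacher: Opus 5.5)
Your proposal is correct and follows essentially the paper's route: triangulate the facets of $\operatorname{conv}(\alpha E)$ symmetrically and consistently using only its vertices, take the cones over the simplices, and read off the linear formula from the facet's supporting functional (the paper phrases this last step as linearity on $C$ plus $\rho(x_i;\alpha E)=\alpha_i$ from admissibility). Your pulling triangulation with the vertices ordered by antipodal pairs is a more careful implementation of the paper's ``refining if necessary'' step, since it gives agreement on common faces and central symmetry without adding vertices; a general refinement would add vertices not lying on rays of $E$.
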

\begin{proof}

We first construct a complete, symmetric, simplicial $E$-fan compatible with $\rho(\, \cdot \, ; \alpha E)$. Denote by $F_i$ the facets of $\operatorname{conv}(\alpha E)$. Since $\operatorname{conv}(\alpha E)$ is centrally symmetric, its facets come in antipodal pairs $F_i,-F_i$. We choose a triangulation of one representative of each antipodal pair and assign to the opposite facet the antipodal triangulation. Refining if necessary, we may assume that these triangulations agree on common faces.

\noindent Thus, for each pair $F_i,-F_i$, there exist $(d-1)$-triangulations $\mathcal{T}_i=\{\Delta^i_j\}_j$ of $F_i$ and $-\mathcal{T}_i=\{-\Delta^i_j\}_j$ of $-F_i$ which agree on common faces. Set $\mathcal{T} = \bigcup_i \, (\mathcal{T}_i \cup - \mathcal{T}_i)$.

Let $\Delta = \operatorname{conv}\{\alpha^{-1}_{n_1} x_{n_1} , \dots , \alpha^{-1}_{n_d} x_{n_d} \} \in \mathcal{T}$ be a maximal simplex. We claim that $x_{n_1} , \dots , x_{n_d}$ are linearly independent. Indeed, $\Delta$ is a simplex, then the vectors 

\begin{equation}
\alpha^{-1}_{n_k} x_{n_k} - \alpha^{-1}_{n_1} x_{n_1}, \qquad k = 2 , \dots , d
\end{equation}

\noindent are linearly independent. Since $\{\alpha^{-1}_{n_1} x_{n_1} , \dots , \alpha^{-1}_{n_d} x_{n_d} \}$ is contained in some facet, there exists a supporting functional $\ell$ such that $\ell(\alpha^{-1}_{n_k} x_{n_k})= 1$. Thus $\ell(\alpha^{-1}_{n_k} x_{n_k} - \alpha^{-1}_{n_1} x_{n_1}) = 0$. Hence $\alpha^{-1}_{n_1} x_{n_1} \notin \operatorname{span}(\alpha^{-1}_{n_k} x_{n_k} - \alpha^{-1}_{n_1} x_{n_1})$. That is, $\alpha^{-1}_{n_1}x_{n_1}$ is linearly independent of the vectors $\alpha^{-1}_{n_k} x_{n_k} - \alpha^{-1}_{n_1} x_{n_1}$, $k= 2 , \dots , d$. Therefore, 

\begin{equation}\label{technical linearly independence of vertices}
\sum_{k=2}^{d} c_k(\alpha^{-1}_{n_k}x_{n_k} - \alpha^{-1}_{n_1}x_{n_1}) + c_1 \alpha^{-1}_{n_1} x_{n_1} = 0
\end{equation}

\noindent implies $c_k = 0$ for each $k = 1 , \dots , d $. 

For each maximal simplex $\Delta = \operatorname{conv}\{\alpha^{-1}_{n_1} x_{n_1} , \dots , \alpha^{-1}_{n_d} x_{n_d} \}  \in \mathcal{T}$, define

\begin{equation}\label{technical E-fan equation}
C_\Delta=\operatorname{cone}(\Delta) = \operatorname{cone}(x_{n_1},\dots,x_{n_d}).
\end{equation}

\noindent Let $\mathcal{F}$ be the family formed by the cones $C_\Delta$ together with all their faces. Since the triangulations in $\mathcal{T}$ agree on common faces, the family $\mathcal{F}$ is a fan. Moreover, since $\mathcal{T}$ is symmetric and covers the facets of $\operatorname{conv}(\alpha E)$, the fan $\mathcal{F}$ is complete and symmetric. By (\ref{technical linearly independence of vertices}) and (\ref{technical E-fan equation}), every maximal cone of $\mathcal{F}$ is simplicial and generated by rays determined by elements of $E$. Hence $\mathcal{F}$ is a complete, symmetric, simplicial $E$-fan. The compatibility of $\mathcal{F}$ with $\rho(\, \cdot \, ; \alpha E)$ follows from the fact that every cone of $\mathcal{F}$ is contained in the cone generated by a facet of $\operatorname{conv}(\alpha E)$ on which $\rho(\,\cdot\,;\alpha E)$ is linear.

We now prove the piecewise linear representation. The case $x=0$ is trivial. Let $x \in X \setminus \{ 0\}$. Then there exists at least one maximal $C =\operatorname{cone}(x_{n_1} , \dots , x_{n_d}) \in \mathcal{F}$ with $x \in C$. Hence there exist unique $ c_1 , \dots , c_d \geq 0$ such that

\begin{equation}
x = \sum_{k= 1}^d c_k x_{n_k}.
\end{equation}

\noindent Then, by the compatibility of $\mathcal{F}$ with $\rho(\, \cdot \, ; \alpha E)$ and Lemma \ref{lemma properties of E-admissible multiindex}

\begin{equation}
\rho(x ; \alpha E) = \sum_{k= 1}^d c_k \rho(x_{n_k} ; \alpha E) =\sum_{k= 1}^d c_k \alpha_{n_k}.
\end{equation}
\end{proof}

The notion of an $\alpha E$-compatible fan captures the local piecewise linear regions of the norm. However, this structure alone is not enough to reconstruct the entire norm. For this reason, we introduce the following

\begin{definition}[$E$-admissible pair $(\mathcal{F}, \alpha)$]
Let $E$ be a direction set and $\mathcal{F}$ be a complete, symmetric, simplicial $E$-fan. Let $\alpha: E \to (0 , \infty)$ be a symmetric weight, that is, $\alpha(-x) =\alpha(x)$ for each $x \in E$. We say that the pair $(\mathcal F,\alpha)$ is $E$-admissible if, for every maximal cone

\begin{equation}
C=\operatorname{cone}(x_{n_1},\dots,x_{n_d})\in\mathcal F,
\end{equation}

\noindent the unique functional $\ell_C^\alpha\in X^*$ satisfying

\begin{equation}
\ell_C^\alpha (x_{n_k})=\alpha(x_{n_k}),\qquad k=1,\dots,d,
\end{equation}

\noindent also satisfies

\begin{equation}
\ell_C^\alpha(x_i)\leq \alpha(x_i)
\end{equation}

\noindent for every $x_i\in E$. 
\end{definition}

\begin{remark}\label{remark:weight multiindex identification}
If $E = (x_1 , \dots , x_n , -x_1, \dots , -x_n)$ then every symmetric weight $\alpha: E \to (0 , \infty)$ can be naturally identified with the multiindex 

\begin{equation}
\alpha' = (\alpha(x_1) , \dots , \alpha(x_n)) \in \mathbb{R}^n_{>0}.
\end{equation}

\noindent In what follows, we identify symmetric weights on $E$ with multiindices in $\mathbb R^n_{>0}$.
\end{remark}

\begin{example}
Let $X=\mathbb{R}^3$ and 

\begin{equation}
E=\{(\varepsilon_1,\varepsilon_2,\varepsilon_3)\mid \varepsilon_i\in\{-1,1\},\ i=1,2,3\}
\end{equation}

\noindent be the set of vertices of the cube $[-1,1]^3$. We distinguish the three positive facets

\begin{equation}
F_x^+=\{(1,\varepsilon_2,\varepsilon_3)\mid \varepsilon_2,\varepsilon_3\in\{-1,1\}\},
\qquad
F_y^+=\{(\varepsilon_1,1,\varepsilon_3)\mid \varepsilon_1,\varepsilon_3\in\{-1,1\}\},
\end{equation}

\noindent and

\begin{equation}
F_z^+=\{(\varepsilon_1,\varepsilon_2,1)\mid\varepsilon_1,\varepsilon_2\in\{-1,1\}\}.
\end{equation}

\noindent Each of these facets admits exactly two triangulations, according to the choice of one of its two diagonals. We denote by

\begin{equation}
d_r^\sigma, \qquad r=x,y,z, \qquad \sigma=1,2,
\end{equation}

\noindent the $\sigma$th diagonal of the positive facet $F_r^+$.

For each choice of diagonals

\begin{equation}
D=(d_x^{\sigma_1},d_y^{\sigma_2},d_z^{\sigma_3}), \qquad \sigma_1,\sigma_2,\sigma_3\in\{1,2\},
\end{equation}

\noindent we obtain a triangulation $\mathcal{T}_D$ of the positive facets. The triangulations of the opposite facets are obtained by antipodal symmetry. Thus $\mathcal{T}_D$ induces a symmetric triangulation of the boundary of the cube.

By taking the cones generated by the triangles of $\mathcal{T}_D$, together with all their faces, we obtain a complete, symmetric, simplicial $E$-fan, which we denote by $\mathcal{F}_D$. Since there are two possible choices of diagonal for each of the three distinguished facets, this construction gives

\begin{equation}
2^3=8
\end{equation}

\noindent different complete, symmetric, simplicial $E$-fans.
\end{example}

\begin{lemma}\label{lemma fan admissible induces norm}
Let $X$ be a $d$-dimensional Banach space, $E$ be a direction set and $\mathcal F$ be a complete, symmetric, simplicial $E$-fan. 

Let $\alpha:E\to(0,\infty)$ be a symmetric weight such that $(\mathcal F,\alpha)$ is $E$-admissible. For each maximal cone

\begin{equation}
C=\operatorname{cone}(x_{n_1},\dots,x_{n_d})\in\mathcal F,
\end{equation}

\noindent define $p_\alpha$ on $C$ by

\begin{equation}
p_\alpha(x)=\sum_{k=1}^d c_k\alpha(x_{n_k}),
\end{equation}

\noindent whenever

\begin{equation}
x=\sum_{k=1}^d c_k x_{n_k},\qquad c_k\geq 0.
\end{equation}

\noindent Then $p_\alpha$ is well defined and linear on every cone of $\mathcal F$. Moreover, the multiindex induced by $\alpha$ is $E$-admissible and

\begin{equation}
p_\alpha(x)=\rho(x;\alpha E),\qquad x\in X.
\end{equation}
\end{lemma}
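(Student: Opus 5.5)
Our plan is to identify $p_\alpha$ with the maximum of the finitely many linear functionals $\ell_C^\alpha$, and then to identify that maximum with the Minkowski functional of $\operatorname{conv}(\alpha E)$.

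\textbf{Well-definedness.} On a maximal cone $C=\operatorname{cone}(x_{n_1},\dots,x_{n_d})$, Lemma \ref{lema full dimension of generators of maximal cones on fan} gives that the generators form a basis. Hence the coefficients $c_k$ are unique, and $p_\alpha|_C=\ell_C^\alpha|_C$, which is linear. If $x$ lies in two maximal cones $C,C'$, then $x\in C\cap C'$, which is a common face $G$. A face of a simplicial cone is generated by a subset of its generators. On $G$ both $\ell_C^\alpha$ and $\ell_{C'}^\alpha$ take the value $\alpha$ on the generators of $G$, so they agree there, and the value $p_\alpha(x)$ does not depend on the cone used. Every cone of $\mathcal F$ is a face of a maximal cone, since the fan is complete and its maximal cones are full dimensional. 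Therefore $p_\alpha$ is linear on every cone of $\mathcal F$.

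\textbf{Key identity.} We claim that $p_\alpha(x)=\max_{C}\ell_C^\alpha(x)$ for all $x$, where $C$ ranges over the maximal cones. Admissibility gives $\ell_C^\alpha(x_i)\le\alpha(x_i)$ for every $x_i\in E$. For $-x_i$ this follows from symmetry: the fan is symmetric, so $-C$ is a maximal cone, and $\ell_{-C}^\alpha=-\ell_C^\alpha$ holds because $\alpha$ is symmetric. Since admissibility also applies to $-C$, we get $\ell_C^\alpha(-x_i)=-\ell_{-C}^\alpha(x_i)\le \alpha(x_i)$ after taking care with signs; more directly, admissibility is stated for all $x_i\in E$, and this includes $-x_i$. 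Now take $x\in C'$ written as $x=\sum c_k x_{m_k}$ with $c_k\ge0$. Then
\[
\ell_C^\alpha(x)=\sum_k c_k\,\ell_C^\alpha(x_{m_k})\le \sum_k c_k\,\alpha(x_{m_k})=\ell_{C'}^\alpha(x)=p_\alpha(x).
\]
Equality holds when $C=C'$, which proves the claim. Consequently $p_\alpha$ is a maximum of linear functionals, hence convex and positively homogeneous. It is also even, by the symmetry $\ell_{-C}^\alpha=-\ell_C^\alpha$, and it is positive on $X\setminus\{0\}$, since $x\neq0$ has some $c_k>0$ and all $\alpha>0$. So $p_\alpha$ is a norm.

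\textbf{Identification with $\rho(\cdot;\alpha E)$.} It suffices to show that $B_{p_\alpha}=\operatorname{conv}(\alpha E)$.
\begin{itemize}
\item[(i)] Each point $\pm\alpha_i^{-1}x_i$ satisfies $p_\alpha(\pm\alpha_i^{-1}x_i)\le1$, because every $\ell_C^\alpha$ is bounded by $1$ there. Since $B_{p_\alpha}$ is convex, $\operatorname{conv}(\alpha E)\subset B_{p_\alpha}$.
\item[(ii)] Conversely, let $p_\alpha(x)\le1$ with $x\in C$. Then $x=\sum c_k\alpha(x_{n_k})\cdot\alpha(x_{n_k})^{-1}x_{n_k}$, and the coefficients $c_k\alpha(x_{n_k})$ are nonnegative with sum $\le1$. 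Since $0\in\operatorname{conv}(\alpha E)$, this is a convex combination of points of $\alpha E$ and $0$, so $x\in\operatorname{conv}(\alpha E)$.
\end{itemize}
Hence $p_\alpha=\rho(\cdot;\alpha E)$. Moreover, $\rho(x_i;\alpha E)=p_\alpha(x_i)=\alpha(x_i)$: by completeness $x_i$ lies in some maximal cone, and by the well-definedness step we may take one having $x_i$ among its generators. By Lemma \ref{lemma properties of E-admissible multiindex}, condition 4), the multiindex induced by $\alpha$ is therefore $E$-admissible.

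\textbf{Main obstacle.} The step that needs the most care is the last one: checking that every $x_i\in E$ actually spans a ray of $\mathcal F$. The definition of an $E$-fan only says that the cones are generated by elements of $E$, not that all of them are used. If some $x_i$ is not a generator, we still get $p_\alpha(x_i)\le\alpha(x_i)$ from (i), and the equality $p_\alpha=\rho(\cdot;\alpha E)$ is unaffected. However, the admissibility conclusion then needs the reverse inequality $p_\alpha(x_i)\ge\alpha(x_i)$. We would first try to obtain it from the admissibility inequality applied to the cone containing $x_i$. If that fails, the natural fix is to interpret $E$-fans as having ray set exactly $\operatorname{Dir}(E)$, which is what the construction in Theorem \ref{theorem local fan form of alpha E norms} produces for admissible $\alpha$.
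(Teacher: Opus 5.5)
Your argument follows the paper's proof step for step. The steps are: well-definedness via common faces; the identity $p_\alpha=\max_C\ell_C^\alpha$ from the admissibility inequalities; the two inclusions between $B_{p_\alpha}$ and $\operatorname{conv}(\alpha E)$; and finally condition 4) of Lemma~\ref{lemma properties of E-admissible multiindex}. Your explicit check that $p_\alpha>0$ on $X\setminus\{0\}$ is cleaner than the paper's bare appeal to the max-of-functionals formula. The identity $\ell_C^\alpha(-x_i)=-\ell_{-C}^\alpha(x_i)$ has the wrong sign; it should read $\ell_C^\alpha(-x_i)=\ell_{-C}^\alpha(x_i)$. This is harmless, since admissibility is quantified over all of $E$ anyway.

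The obstacle you flag at the end is genuine, and the paper's proof has exactly the same hole. It simply writes $\rho(x;\alpha E)=p_\alpha(x)=\alpha(x)$ for each $x\in E$, which is only justified when $x$ generates some maximal cone of $\mathcal F$. Your first fallback cannot work. If $x_i\in C$ is not a generator, admissibility for $C$ gives $p_\alpha(x_i)=\ell_C^\alpha(x_i)\le\alpha(x_i)$, which is an upper bound and never the reverse.

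In fact the admissibility conclusion is false as stated. Take $X=\mathbb{R}^2$ with the Euclidean norm, $u=(1,1)/\sqrt2$, and $E=\{\pm e_1,\pm e_2,\pm u\}$. Let $\mathcal F$ consist of the four closed quadrants and their faces. This is a complete, symmetric, simplicial $E$-fan that simply does not use the ray through $u$. Set $\alpha(\pm e_1)=\alpha(\pm e_2)=1$ and $\alpha(\pm u)=2$.
\begin{itemize}
\item The four functionals are $\pm x\pm y$. They are at most $1$ on $\pm e_1,\pm e_2$ and at most $\sqrt2\le2$ on $\pm u$, so $(\mathcal F,\alpha)$ is $E$-admissible.
\item Consequently $p_\alpha(x,y)=|x|+|y|=\rho(\,\cdot\,;\alpha E)$.
\item However, $\rho(u;\alpha E)=\sqrt2\neq2$, so the multiindex $(1,1,2)$ is not $E$-admissible.
\end{itemize}
Every choice $\alpha(\pm u)\ge\sqrt2$ gives the same norm. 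So $\mathcal R(\mathcal F)\subset\mathcal R(E)$ and the injectivity of $\phi_{\mathcal F}$ in Lemma~\ref{lemma parametrization NE NF} also fail in this generality.

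Your proposed fix is therefore necessary, not merely convenient: you must assume that every element of $E$ spans a ray of $\mathcal F$. Then that ray is a face of some maximal cone, so $x_i$ is one of its generators (both are unit vectors on the same ray), and your last step goes through.

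One caution about your side remark. The construction in Theorem~\ref{theorem local fan form of alpha E norms} does not automatically produce such fans, because a triangulation of a facet need not use the non-vertex boundary points $\alpha_i^{-1}x_i$. Under your convention, those triangulations must additionally be chosen to use all of these points.
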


\begin{proof}
Since $\mathcal{F}$ is complete, every $x\in X$ belongs to some maximal cone of $\mathcal{F}$. By Lemma \ref{lema full dimension of generators of maximal cones on fan}, every maximal cone is generated by $d$ linearly independent rays. Since $\mathcal{F}$ is simplicial, the conical coordinates of $x$ inside each maximal cone are unique.

We first prove that $p_\alpha$ is well defined. Suppose that $x\in C_1\cap C_2$, where $C_1$ and $C_2$ are maximal cones of $\mathcal{F}$. Since $\mathcal{F}$ is a fan, $C_1\cap C_2$ is a common face of both cones. Hence the representation of $x$ uses only the common rays of this face. Thus the corresponding coefficients are the same in both cones, and the values of $\alpha$ on the common generators are the same. Therefore the value of $p_\alpha(x)$ does not depend on the maximal cone chosen.

\noindent By construction, $p_\alpha$ is linear on every maximal cone, and hence on every cone of $\mathcal{F}$. Since $\mathcal{F}$ is symmetric and $\alpha$ is a symmetric weight, $p_\alpha$ is symmetric.

Now we prove that $p_\alpha$ is a polyhedral norm. For each maximal cone $C = \operatorname{cone}(x_{n_1},\dots,x_{n_d})\in\mathcal{F}$, let $\ell_C^\alpha\in X^*$ be the unique functional satisfying

\begin{equation}\label{technical equality 1}
\ell_C^\alpha(x_{n_k})=\alpha(x_{n_k}),\qquad k=1,\dots,d.
\end{equation}

\noindent Since $(\mathcal{F}, \alpha)$ is $E$-admissible, for every maximal cone $C$ and every $x_i\in E$ we have

\begin{equation}\label{technical equality 2}
\ell_C^\alpha(x_i)\leq \alpha(x_i).
\end{equation}

\noindent Let $x\in X$ and choose a maximal cone $D=\operatorname{cone}(x_{j_1},\dots,x_{j_d})$ containing $x$. Then there exist unique

\begin{equation}
x=\sum_{k=1}^d c_k x_{j_k},\qquad c_k \geq 0.
\end{equation}

\noindent Then, by (\ref{technical equality 1}) and (\ref{technical equality 2}), for every maximal cone $C$,

\begin{equation}
\begin{split}
\ell_C^\alpha(x) & = \sum_{k=1}^d c_k\ell_C^\alpha(x_{j_k}) \leq  \sum_{k=1}^d c_k\alpha(x_{j_k}) \\
                 & =  p_\alpha(x) = \ell_D^\alpha(x).
\end{split}
\end{equation}

\noindent Thus

\begin{equation}\label{technical dual form of p alpha}
p_\alpha(x)=\max_{C\in\mathcal{F}_{\max}}\ell_C^\alpha(x).
\end{equation}

\noindent where $\mathcal{F}_{\max}$ is the collection of maximal cones of $\mathcal{F}$. Since $\mathcal{F}$ is symmetric and $\alpha$ is symmetric, the family $\{\ell_C^\alpha: C\in\mathcal F_{\max}\}$ is symmetric. Hence $p_\alpha$ is the maximum of a finite symmetric family of linear functionals. By \eqref{polyhedral norm definition}, $p_\alpha$ is a polyhedral norm.

We now prove that $p_\alpha( \, \cdot \,) =\rho(\, \cdot \,;\alpha E)$. Let

\begin{equation}
P_\alpha=\operatorname{conv}\{\alpha(x)^{-1}x \mid x\in E\}, \qquad B_{p_\alpha} = \{x \in X \mid p_\alpha(x) \leq 1 \}.
\end{equation}

\noindent Since $(\mathcal{F}, \alpha )$ is $E$-admissible, by (\ref{technical equality 2}) for each $x \in E$ and each maximal cone $C \in \mathcal{F}_{\max}$ we have

\begin{equation}
\ell_C^\alpha(\alpha(x)^{-1}x)\leq 1.
\end{equation}

\noindent Therefore for each $x \in P_\alpha$ and $C \in \mathcal{F}_{\max}$ we have $\ell_C^\alpha(x) \leq 1$. Then using (\ref{technical dual form of p alpha}),

\begin{equation}
p_\alpha(x)=\max_{C\in\mathcal F_{\max}}\ell_C^\alpha(x) \leq 1
\end{equation}

\noindent for each $x \in P_\alpha$. Hence

\begin{equation}
P_\alpha\subseteq B_{p_\alpha}.
\end{equation}

\noindent Conversely, let $x\in B_{p_\alpha}$. Choose a maximal cone $D=\operatorname{cone}(x_{n_1},\dots,x_{n_d})$ such that $x\in D$. Then $x= \sum_{i=1}^d c_k x_{n_k}$ for some $c_k \geq 0$ and

\begin{equation}\label{technical p alfa leq 1}
\sum_{k=1}^d c_k\alpha(x_{n_k})= p_\alpha(x) \leq 1.
\end{equation}

\noindent Note that

\begin{equation}
x= \sum_{k=1}^d c_k\alpha(x_{n_k}) \left(\alpha(x_{n_k})^{-1}x_{n_k}\right).
\end{equation}

\noindent Thus by (\ref{technical p alfa leq 1}) and $0\in P_\alpha$, we get $x \in P_\alpha$. Hence

\begin{equation}
B_{p_\alpha}\subseteq P_\alpha.
\end{equation}

\noindent Then $p_\alpha$ is the Minkowski functional of $\operatorname{conv}(\alpha E)$, that is,

\begin{equation}
p_\alpha(x)=\rho(x;\alpha E),\qquad x\in X.
\end{equation}

Finally the $E$-admissibility of $\alpha$ viewed as a multiindex follows by Lemma \ref{lemma properties of E-admissible multiindex} and $\rho(x ; \alpha E) = p_\alpha(x) = \alpha(x)$ for each $x \in E$.
\end{proof}

\begin{lemma}\label{lemma support functional from linear cone}
Let $X$ be a $d$-dimensional Banach space, $p \in \mathcal{N}(X)$, $C= \operatorname{cone}(x_1 , \dots ,x_d)$ be a full dimensional cone such that $p$ is linear on $C$. Then there exists a unique supporting functional $\ell_C$ of $B_p$ such that 

\begin{equation}
\ell_C(x) = p(x) , \qquad x \in C
\end{equation} 

\noindent and

\begin{equation}
\ell_C(x) \leq p(x) , \qquad x \in X.
\end{equation}
\end{lemma}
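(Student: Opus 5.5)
The plan is to build $\ell_C$ directly from the values of $p$ on the generators, then extend the inequality $\ell_C\le p$ from $C$ to all of $X$ by convexity.

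\emph{Construction and uniqueness.} Since $C=\operatorname{cone}(x_1,\dots,x_d)$ is full dimensional, $x_1,\dots,x_d$ span $X$, and a spanning set of $d$ vectors in a $d$-dimensional space is linearly independent. Hence there is a unique linear functional $\ell_C\in X^*$ with
\begin{equation*}
\ell_C(x_k)=p(x_k),\qquad k=1,\dots,d.
\end{equation*}
Because $p$ is linear on $C$, for $x=\sum_k c_kx_k$ with $c_k\ge 0$ we get $p(x)=\sum_k c_kp(x_k)=\ell_C(x)$, so $\ell_C=p$ on $C$. Uniqueness is immediate: any linear functional agreeing with $p$ on $C$ agrees with $\ell_C$ on the basis $x_1,\dots,x_d$, hence everywhere.

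\emph{The inequality $\ell_C\le p$ on $X$.} This is the main step. Fix an interior point $x_0$ of $C$, for instance $x_0=\sum_k x_k$. Let $y\in X$ be arbitrary. For $t>0$ small enough, $x_0+ty\in C$, since $x_0$ is interior. Then
\begin{equation*}
\ell_C(x_0)+t\,\ell_C(y)=\ell_C(x_0+ty)=p(x_0+ty)\le p(x_0)+t\,p(y)=\ell_C(x_0)+t\,p(y),
\end{equation*}
where the middle inequality is subadditivity and homogeneity of $p$. Cancelling $\ell_C(x_0)$ and dividing by $t$ gives $\ell_C(y)\le p(y)$. In other words, subadditivity forces a linear piece of a sublinear function on an open set to be a global minorant.

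\emph{Supporting property.} The inequality $\ell_C\le p$ gives $\ell_C\le 1$ on $B_p$. Since $p(x_0)=\ell_C(x_0)>0$, the point $x_0/p(x_0)\in S_p$ satisfies $\ell_C=1$ there. Hence $\{\ell_C=1\}$ is a supporting hyperplane of $B_p$ and $\ell_C$ is a supporting functional.

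\emph{Expected difficulty.} The only delicate point is justifying that $x_0+ty\in C$ for small $t$, i.e. that a full dimensional simplicial cone has nonempty interior containing $\sum_k x_k$. This follows because the coordinate map $x\mapsto(c_1,\dots,c_d)$ is a linear isomorphism, hence continuous, and the coordinates of $x_0$ are all equal to $1>0$, so they stay positive under small perturbations.
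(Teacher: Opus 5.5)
Your proposal is correct and is essentially the paper's argument: both perturb an interior point of $C$ in the direction of an arbitrary vector and use convexity of $p$. The paper argues by contradiction with the convexity of $B_p$, using $z_t=(1-t)u+ty$ with $p(u)=1$, to get $\ell_C\le 1$ on $B_p$ and then rescales. You instead apply subadditivity of $p$ directly to obtain $\ell_C\le p$ and deduce the supporting property afterwards, and you also justify the existence of the interior point, which the paper takes for granted.
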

\begin{proof}
Since $C$ is full dimensional and $p$ is linear on $C$, there exists a unique linear functional $\ell_C $ such that $\ell_C = p$ on $C$. We claim that $\ell_C(x)\leq 1$ for every $x\in B_p$. If this were not the case, there would exist $y\in B_p$ such that $\ell_C(y)>1$. Choose $u\in \operatorname{int}(C)$ with $p(u)=1$. Then $u \in B_p$ and $\ell_C(u)=1$. For $t>0$ sufficiently small, 

\begin{equation}
z_t=(1-t)u+ty \in C.
\end{equation}

\noindent On the other hand, since $B_p$ is convex and $u,y\in B_p$, we have $z_t\in B_p$, and hence $p(z_t)\leq 1$. However, since $z_t\in C$,

\begin{equation}
p(z_t)=\ell_C(z_t)=(1-t)\ell_C(u)+t\ell_C(y)>1,
\end{equation}

\noindent which is impossible. Thus $\ell_C(x)\leq 1$ for every $x\in B_p$. That is, $\ell_C$ is a support functional of $B_p$. 

Let $x \in X \setminus \{ 0 \}$. Then $p(x)^{-1} x \in B_p$ and

\begin{equation}
\ell_C(x) \leq p(x), \qquad x \in X. 
\end{equation}

\noindent In particular, by construction

\begin{equation}
\ell_C(x) = p(x), \qquad x \in C.
\end{equation}
\end{proof}

\begin{corollary}\label{corollary fan compatibility admissibility equivalence}
Let $X$ be a $d$-dimensional Banach space, $E$ be a direction set and $\mathcal{F}$ be a complete, symmetric, simplicial $E$-fan. Then the following statements are equivalent:

\begin{itemize}
\item[1)] $(\mathcal{F},\alpha)$ is $E$-admissible.
\item[2)] The $\alpha E$-norm $\rho(\, \cdot \,;\alpha E)$ is compatible with $\mathcal{F}$ and $\alpha$ is $E$-admissible.
\end{itemize}
\end{corollary}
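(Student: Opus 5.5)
The two implications are handled separately. For $1)\Rightarrow 2)$ essentially everything is already in Lemma \ref{lemma fan admissible induces norm}. For $2)\Rightarrow 1)$ we combine the piecewise linear formula of Theorem \ref{theorem local fan form of alpha E norms} with the supporting functional of Lemma \ref{lemma support functional from linear cone}. Throughout, symmetric weights are identified with multiindices as in Remark \ref{remark:weight multiindex identification}.

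\emph{$1)\Rightarrow 2)$.} Assume $(\mathcal F,\alpha)$ is $E$-admissible. Lemma \ref{lemma fan admissible induces norm} gives a function $p_\alpha$ that is linear on every cone of $\mathcal F$ and satisfies $p_\alpha=\rho(\,\cdot\,;\alpha E)$. The same lemma shows that the multiindex induced by $\alpha$ is $E$-admissible. Hence $\rho(\,\cdot\,;\alpha E)$ is linear on each cone of $\mathcal F$, which is exactly compatibility of $\mathcal F$ with the $\alpha E$-norm.

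\emph{$2)\Rightarrow 1)$.} Assume $\alpha$ is $E$-admissible and write $p=\rho(\,\cdot\,;\alpha E)$, which is linear on every cone of $\mathcal F$. Fix a maximal cone $C=\operatorname{cone}(x_{n_1},\dots,x_{n_d})$. By Lemma \ref{lema full dimension of generators of maximal cones on fan} it is full dimensional with linearly independent generators. Lemma \ref{lemma support functional from linear cone} therefore provides a linear functional $\ell_C$ with $\ell_C=p$ on $C$ and $\ell_C\le p$ on all of $X$.

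By Lemma \ref{lemma properties of E-admissible multiindex}, item 4), we have $p(x_{n_k})=\alpha_{n_k}=\alpha(x_{n_k})$. So $\ell_C$ takes the prescribed values on a basis, and uniqueness gives $\ell_C=\ell_C^\alpha$.

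For an arbitrary $x_i\in E$ we then obtain $\ell_C^\alpha(x_i)\le p(x_i)$. Item 4) again gives $p(x_i)=\alpha(x_i)$, using symmetry of $p$ and of $\alpha$ when $x_i$ is a negative representative $-x_j$. This is precisely the admissibility inequality, so $(\mathcal F,\alpha)$ is $E$-admissible.

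The only delicate point is the identification $\ell_C=\ell_C^\alpha$. It requires $p(x_{n_k})=\alpha(x_{n_k})$ exactly, and this is where $E$-admissibility of $\alpha$ in 2) is indispensable. Without it, Lemma \ref{lemma properties of E-admissible multiindex1} B) only yields $p(x_{n_k})\le\alpha_{n_k}$, and the functional $\ell_C^\alpha$ defined from the weights could differ from the true supporting functional $\ell_C$. Beyond this, one only needs to check that the generators of maximal cones of the $E$-fan are elements of $E$, possibly negative representatives, so that the symmetric weight is evaluated consistently.
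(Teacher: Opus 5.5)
Your proof is correct and follows the paper's argument. Direction $1)\Rightarrow 2)$ is read off from Lemma \ref{lemma fan admissible induces norm}, and $2)\Rightarrow 1)$ combines Lemma \ref{lemma support functional from linear cone} with item 4) of Lemma \ref{lemma properties of E-admissible multiindex}; you also usefully spell out the identification $\ell_C=\ell_C^\alpha$ via uniqueness on a basis and handle the negative representatives $-x_j$ by symmetry, both of which the paper leaves implicit. The only blemish is that your opening paragraph announces a use of Theorem \ref{theorem local fan form of alpha E norms} that the argument never actually makes or needs.
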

\begin{proof}
By Lemma \ref{lemma fan admissible induces norm} the norm $\rho(\, \cdot \,;\alpha E)$ is compatible with $\mathcal{F}$ and $\alpha$ is $E$-admissible.

Conversely. By the $E$-admissibility of $\alpha$ and Lemma \ref{lemma properties of E-admissible multiindex} we have $\alpha(x_i) = \rho(x_i ; \alpha E)$ for each $x_i \in E$. Since $\rho(\, \cdot \, ; \alpha E)$ is compatible with $\mathcal{F}$, then it is linear on every maximal cone $C \in \mathcal{F}_{\max}$. Then by Lemma \ref{lemma support functional from linear cone} for every $C=\operatorname{cone}(x_{n_1} , \dots , x_{n_d}) \in \mathcal{F}_{\max}$ there exists a unique supporting functional $\ell_C^\alpha$ of $\operatorname{conv}(\alpha E)$ such that 

\begin{equation}
\ell_C^\alpha(x_{n_k}) = \rho(x_{n_k}; \alpha E) = \alpha(x_{n_k}), \qquad k= 1, \dots , d.
\end{equation}

\noindent and 

\begin{equation}
\ell_C^\alpha(x_i) \leq \rho(x_i; \alpha E) = \alpha(x_i), \qquad x_i \in E.
\end{equation}

\noindent Hence $(\mathcal{F}, \alpha)$ is $E$-admissible.
\end{proof}

\begin{lemma}\label{lemma rigidity of two dimensional structure of fans}
Let $X$ be a two-dimensional Banach space and $E$ be a direction set of cardinality $2n$. Then there exists a canonical complete, symmetric, simplicial $E$-fan $\mathcal{F}_E$ such that for every $E$-admissible multiindex $\alpha$, the pair $(\mathcal{F}_E, \alpha)$ is $E$-admissible. Consequently, every $\alpha E$-norm is compatible with $\mathcal{F}_E$.
\end{lemma}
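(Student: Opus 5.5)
The canonical fan will be the angular fan obtained by ordering the rays of $E$ around the origin. Since $E$ is finite and symmetric in the plane, the $2n$ rays $\mathbb{R}_{>0}x$, $x\in E$, are pairwise distinct; they can be listed in counterclockwise cyclic order as $y_1,\dots,y_{2n}$ with $y_{k+n}=-y_k$. We assume here that no two representatives $x_i,x_j$ with $i\neq j$ are positively or negatively collinear. Define $\mathcal{F}_E$ to consist of the cones $\operatorname{cone}(y_k,y_{k+1})$ (indices mod $2n$), together with the rays $\operatorname{cone}(y_k)$ and $\{0\}$.

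Because $\operatorname{conv}(E)$ has nonempty interior, consecutive rays subtend an angle strictly less than $\pi$. Hence each $\operatorname{cone}(y_k,y_{k+1})$ is a two-dimensional simplicial cone, and consecutive cones meet exactly along a common ray. Thus $\mathcal{F}_E$ is a complete simplicial $E$-fan. It is symmetric because $y_{k+n}=-y_k$. It is also independent of $\alpha$, and this independence is what makes it canonical.

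Now fix an $E$-admissible $\alpha$. By Corollary \ref{corollary fan compatibility admissibility equivalence}, it suffices to show that $\rho(\,\cdot\,;\alpha E)$ is linear on each $C_k=\operatorname{cone}(y_k,y_{k+1})$. By Lemma \ref{lemma properties of E-admissible multiindex}, each point $v_k=\alpha(y_k)^{-1}y_k$ lies on the boundary of the polygon $P=\operatorname{conv}(\alpha E)$. The key planar fact is the following. Walking along $\partial P$ counterclockwise meets the rays in their angular order. So the boundary arc of $P$ lying in $C_k$ runs from $v_k$ to $v_{k+1}$, and no other $v_j$ lies strictly inside $C_k$.

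Every vertex of $P$ is one of the $v_j$. Therefore the arc of $\partial P$ inside $C_k$ contains no vertex of $P$ in its relative interior, which forces it to be the segment $[v_k,v_{k+1}]$. That segment is contained in an edge of $P$, and so $\rho(\,\cdot\,;\alpha E)$ agrees on $C_k$ with the linear functional $\ell$ satisfying $\ell(v_k)=\ell(v_{k+1})=1$. This is exactly the functional $\ell^\alpha_{C_k}$ of the definition. Lemma \ref{lemma support functional from linear cone} then gives $\ell^\alpha_{C_k}(x_i)\le\rho(x_i;\alpha E)=\alpha_i$, which is the admissibility of the pair $(\mathcal{F}_E,\alpha)$. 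The final sentence of the statement follows at once from Corollary \ref{corollary fan compatibility admissibility equivalence}.

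The main obstacle is the claim that the boundary arc inside $C_k$ contains no vertex other than its endpoints. I would argue it via radial parametrization. The map $\theta\mapsto r(\theta)u_\theta$, with $r(\theta)=\rho(u_\theta;\alpha E)^{-1}$, is a homeomorphism from the circle onto $\partial P$. Extreme points of $P$ lie on rays of $E$, and the open angular sector between $y_k$ and $y_{k+1}$ contains no ray of $E$. Hence the arc inside $C_k$ has no extreme points of $P$ except possibly its endpoints, so it is a single straight segment. One must also handle the degenerate case where some $v_k$ is not a vertex but lies in the interior of an edge; the same segment argument still works there.
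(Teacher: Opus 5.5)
Your proof is correct, but it runs in the opposite logical direction from the paper's.

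The paper builds the same angular fan and then verifies the admissibility inequalities directly. Suppose some $v_l=\alpha(x_l)^{-1}x_l$ had $\ell_k(v_l)>1$. Since $v_l$ cannot lie in $C_k$ or $-C_k$, it lies in $\operatorname{cone}(v_k,-v_{k+1})$ or $\operatorname{cone}(-v_k,v_{k+1})$. In the first case $v_l=av_k-bv_{k+1}$ with $a,b>0$ and $a-b>1$. The identity $\frac{a}{1+b}v_k=\frac{b}{1+b}v_{k+1}+\frac{1}{1+b}v_l$ then places the dilate $\frac{a}{1+b}v_k$, with $\frac{a}{1+b}>1$, inside $\operatorname{conv}(\alpha E)$. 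This contradicts $\rho(v_k;\alpha E)=1$. Compatibility is then deduced from Corollary~\ref{corollary fan compatibility admissibility equivalence} in the direction $1)\Rightarrow 2)$.

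You instead prove compatibility first. You show that the boundary arc of $P$ over each sector $C_k$ is the segment $[v_k,v_{k+1}]$ inside a single edge, so each sector lies in the cone over one edge; this is the same mechanism used in Theorem~\ref{theorem local fan form of alpha E norms}. You then obtain $\ell^\alpha_{C_k}(x_i)\le\alpha_i$ from Lemma~\ref{lemma support functional from linear cone}, which is the direction $2)\Rightarrow 1)$.

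Your route gives the more transparent geometric reason why the angular fan works for every admissible $\alpha$. However, it rests on standard facts you do not prove: the radial homeomorphism $S^1\to\partial P$, and the fact that a boundary arc of a polygon with no vertex in its relative interior lies in a single edge. The paper's route is a short self-contained convexity computation. In effect, it proves directly that the line through $v_k$ and $v_{k+1}$ supports $\operatorname{conv}(\alpha E)$.

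Finally, the non-collinearity you ``assume'' is automatic. Since $E\subset S_X$, positively collinear elements of $E$ coincide, and $x_j\neq -x_i$ by the choice of representatives.
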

\begin{proof}
First we construct the fan. Since $X$ is two-dimensional, after linearly identifying $X$ with $\mathbb{R}^2$, we may assume that the elements of $E=\{x_k \mid k = 1 , \dots, 2n \}$ are ordered cyclically according to their argument. That is,

\begin{equation}
\arg(x_i)<\arg(x_j)
\end{equation}

\noindent whenever $i<j$. Set $x_{2n + 1} = x_1$ and define

\begin{equation}
C_k=\operatorname{cone}(x_k,x_{k+1}), \qquad  k=1,\dots,2n.
\end{equation}

\noindent Let $\mathcal{F}_E$ be the collection of the cones $C_k$, $k=1,\dots,2n$, together with all their faces. Then $\mathcal{F}_E$ is a complete, symmetric, simplicial $E$-fan. Note that the maximal cones of $\mathcal{F}_E$ are the $C_k$ with $k= 1, \dots, 2n$.

Now we prove the admissibility. Let $\alpha$ be an $E$-admissible multiindex and $C_k=\operatorname{cone}(x_k,x_{k+1}) \in \mathcal{F}_E$ for some $k=1,\dots,2n$. For every $i= 1 , \dots, 2n +1$ we define 

\begin{equation}
v_i = \alpha(x_i)^{-1}x_i.
\end{equation}

\noindent Then there exists a unique functional $\ell_k$ such that 

\begin{equation}\label{technical functional on vertex}
\ell_k(v_k) = \ell_k(v_{k+1}) = 1.
\end{equation}

\noindent We claim that $\ell_k(v_i) \leq 1$ for each $i = 1, \dots, 2n$. Indeed, suppose that there exists some $v_l$ such that $\ell_k(v_l) > 1$. Then there are two possible cases: $v_l \in \operatorname{cone}(v_k , -v_{k+1})$ and $v_l \in \operatorname{cone}(-v_k , v_{k+1})$. We consider the first case; the second one is analogous. Using the $E$-admissibility of $\alpha$ and Lemma \ref{lemma properties of E-admissible multiindex} we have that 

\begin{equation}\label{technical norm one}
\rho(v_i ; \alpha E ) = 1 , \qquad i = 1 , \dots , 2n.
\end{equation} 

\noindent Since $v_l \in \operatorname{cone}(v_k , -v_{k+1})$ and $v_l \notin \operatorname{Dir}(\{v_k , -v_{k+1} \})$, there exist $a,b > 0$ such that

\begin{equation}\label{technical conic combination}
v_l=a v_k - b v_{k+1}.
\end{equation}

\noindent Using $\ell_k$ and (\ref{technical functional on vertex}) we have

\begin{equation}
1<\ell_k(v_l)=a - b.
\end{equation}

\noindent Hence 

\begin{equation}\label{technical lower bound for frac}
\frac{a}{1 + b} > 1
\end{equation}

\noindent By (\ref{technical conic combination}) we have 

\begin{equation}
\frac{a}{1 + b}v_k = \frac{b}{1 + b} v_{k+1} + \frac{1}{1 + b} v_l = : v^*. 
\end{equation}

\noindent Note that $\frac{b}{1 + b} , \frac{1}{1 + b} > 0 $ and $\frac{b}{1 + b} + \frac{1}{1 + b} = 1$. Therefore $v^*$ is a convex combination of $v_l$ and $v_{k+1}$. By (\ref{technical lower bound for frac}), there exists $\varepsilon>0$ such that

\begin{equation}\label{technical equal 1 + varepsilon}
(1 + \varepsilon)v_k = v^*.
\end{equation}

\noindent By the convexity of the norm and (\ref{technical norm one}) we have $\rho(v^* ; \alpha E) \leq 1$. Then, using (\ref{technical equal 1 + varepsilon}), we have $\rho(v_k ; \alpha E) \leq (1 + \varepsilon)^{-1 } < 1$, which is impossible. Thus no such $v_l$ exists, and therefore

\begin{equation}
\ell_k(v_i) \leq 1 , \qquad i= 1 , \dots , 2n.
\end{equation}

\noindent That is, for each maximal cone $C_k\in\mathcal{F}_E$, the functional $\ell_k$ satisfies

\begin{equation}
\ell_k(x_k)=\alpha(x_k), \qquad \ell_k(x_{k+1})=\alpha(x_{k+1}),
\end{equation}

\noindent and

\begin{equation}
\ell_k(x_i)\leq \alpha(x_i), \qquad x_i\in E.
\end{equation}

\noindent Hence $(\mathcal{F}_E,\alpha)$ is $E$-admissible for each $E$-admissible $\alpha$. 

Finally, the compatibility of $\rho(\, \cdot \, ; \alpha E)$ with $\mathcal{F}_E$ follows from Corollary \ref{corollary fan compatibility admissibility equivalence}.
\end{proof}

\begin{corollary}\label{corollary characterization (F,alpha) E-admissible}
Let $X$ be a $d$-dimensional Banach space and $E$ be a direction set.

\begin{itemize}
\item[1)] For every $\alpha E$-norm induced by an $E$-admissible multiindex, there exists a complete, symmetric, simplicial $E$-fan $\mathcal F$ compatible with $\rho(\, \cdot \,;\alpha E)$ such that $(\mathcal{F}, \alpha)$ is $E$-admissible.
\item[2)] Conversely, if $\mathcal{F}$ is a complete, symmetric, simplicial $E$-fan and $(\mathcal F,\alpha)$ is $E$-admissible, then the multiindex induced by $\alpha$ is $E$-admissible and the function $p_\alpha$ defined on each maximal cone by

\begin{equation}
x=\sum_{k=1}^d c_k x_{n_k} \quad\longmapsto\quad p_\alpha(x)=\sum_{k=1}^d c_k\alpha(x_{n_k})
\end{equation}

\noindent satisfies

\begin{equation}
p_\alpha(x)=\rho(x;\alpha E),\qquad x\in X.
\end{equation}
\end{itemize}
\end{corollary}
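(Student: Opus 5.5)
The statement is essentially an assembly of Theorem \ref{theorem local fan form of alpha E norms}, Corollary \ref{corollary fan compatibility admissibility equivalence} and Lemma \ref{lemma fan admissible induces norm}, so the plan is to reduce each part to those results, identifying symmetric weights with multiindices via Remark \ref{remark:weight multiindex identification}.

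\emph{Part 1).} Let $\alpha$ be an $E$-admissible multiindex and regard it as a symmetric weight on $E$. Theorem \ref{theorem local fan form of alpha E norms} produces a complete, symmetric, simplicial $E$-fan $\mathcal{F}$ compatible with $\rho(\,\cdot\,;\alpha E)$. Statement 2) of Corollary \ref{corollary fan compatibility admissibility equivalence} then holds, since the norm is compatible with $\mathcal{F}$ and $\alpha$ is $E$-admissible. The implication 2)$\Rightarrow$1) of that corollary gives that $(\mathcal{F},\alpha)$ is $E$-admissible.

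That direction rests on Lemma \ref{lemma support functional from linear cone}. For each maximal cone $C$ it yields a functional $\ell_C$ with $\ell_C\le\rho(\,\cdot\,;\alpha E)$ and $\ell_C(x_{n_k})=\rho(x_{n_k};\alpha E)=\alpha(x_{n_k})$, using Lemma \ref{lemma properties of E-admissible multiindex}. By uniqueness, $\ell_C$ coincides with $\ell_C^\alpha$, and evaluating at the $x_i\in E$ gives the required inequalities $\ell_C^\alpha(x_i)\le \alpha(x_i)$.

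\emph{Part 2).} This is exactly the content of Lemma \ref{lemma fan admissible induces norm}. Given $E$-admissibility of the pair $(\mathcal{F},\alpha)$, that lemma shows $p_\alpha$ is well defined on the maximal cones and linear on each cone. It also shows $p_\alpha=\max_{C}\ell_C^\alpha$, and that its unit ball equals $\operatorname{conv}(\alpha E)$, so $p_\alpha=\rho(\,\cdot\,;\alpha E)$. Finally, $\rho(x_i;\alpha E)=p_\alpha(x_i)=\alpha(x_i)$, so the induced multiindex is $E$-admissible by the equivalence 4)$\Leftrightarrow$1) in Lemma \ref{lemma properties of E-admissible multiindex}.

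The only point needing care is the uniqueness identification $\ell_C=\ell_C^\alpha$ in part 1). It holds because a maximal cone is full dimensional by Lemma \ref{lema full dimension of generators of maximal cones on fan}, so a linear functional is determined by its values on the $d$ linearly independent generators. Everything else is direct citation.
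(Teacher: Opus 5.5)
Your route is the paper's own: part 1) via Theorem~\ref{theorem local fan form of alpha E norms} and the implication 2)$\Rightarrow$1) of Corollary~\ref{corollary fan compatibility admissibility equivalence}, and part 2) by citing Lemma~\ref{lemma fan admissible induces norm}. Part 1) and the identity $p_\alpha=\rho(\,\cdot\,;\alpha E)$ are fine. Your last step, $\rho(x_i;\alpha E)=p_\alpha(x_i)=\alpha(x_i)$ for \emph{every} $x_i\in E$, is a genuine gap. It is the same step that closes the proof of Lemma~\ref{lemma fan admissible induces norm}, so citing that lemma does not cover it.

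The equality $p_\alpha(x_i)=\alpha(x_i)$ is immediate only when $x_i$ is a generator of a maximal cone, i.e. when $x_i$ spans a ray of $\mathcal{F}$. The definition of an $E$-fan does not require this, and the fans built in Theorem~\ref{theorem local fan form of alpha E norms} only use directions of vertices of $\operatorname{conv}(\alpha E)$. If $x_i$ is not a ray of $\mathcal{F}$ and $D$ is a maximal cone containing it, admissibility of the pair only gives $p_\alpha(x_i)=\ell_D^\alpha(x_i)\le\alpha(x_i)$, and this inequality can be strict.

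Concretely, as in the paper's Example, take $X=(\mathbb{R}^2,\|\cdot\|_1)$ and $E=\{\pm e_1,\pm e_2,\pm e_3\}$ with $e_3=\tfrac{1}{2}(e_1+e_2)$. Let $\mathcal{F}$ be the fan of the four closed coordinate quadrants, and let $\alpha=(1,1,\delta)$ with $\delta>1$.
\begin{itemize}
\item On $\operatorname{cone}(e_1,e_2)$ one has $\ell_C^\alpha(x,y)=x+y$, so $\ell_C^\alpha(\pm e_3)=\pm1\le\delta$.
\item On $\operatorname{cone}(e_2,-e_1)$ one has $\ell_C^\alpha(x,y)=y-x$, so $\ell_C^\alpha(\pm e_3)=0$.
\item The other two quadrants follow by symmetry, and the inequalities at $\pm e_1,\pm e_2$ are clear.
\end{itemize}
Thus $(\mathcal{F},\alpha)$ is $E$-admissible and $p_\alpha(x,y)=|x|+|y|=\rho((x,y);\alpha E)$. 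But $\rho(e_3;\alpha E)=1<\delta$, so $\alpha$ is not $E$-admissible: lowering $\delta$ to $1$ does not change the norm. This $\mathcal{F}$ is exactly the fan that Theorem~\ref{theorem local fan form of alpha E norms} produces for $\alpha=(1,1,1)$, so nothing in the paper excludes it.

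So the claim in 2) that the induced multiindex is $E$-admissible is false as stated, and no route can prove it. It does hold under the extra hypothesis that every element of $E$ spans a ray of $\mathcal{F}$. In a complete fan every cone is a face of a maximal one, so each $x_i$ is then a generator of some maximal simplicial cone, and $p_\alpha(x_i)=\alpha(x_i)$ by the definition of $p_\alpha$. Without that hypothesis, what survives is weaker: $\alpha'_i:=p_\alpha(x_i)\le\alpha_i$ defines an $E$-admissible multiindex with $\rho(\,\cdot\,;\alpha' E)=\rho(\,\cdot\,;\alpha E)$. The step that really needed care was this one, not the uniqueness identification $\ell_C=\ell_C^\alpha$, which is routine.
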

\begin{proof}

By Theorem \ref{theorem local fan form of alpha E norms} there exists a complete, symmetric, simplicial $E$-fan compatible with $\rho(\, \cdot \, ; \alpha E)$. The $E$-admissibility of $(\mathcal{F},\alpha)$ follows from Corollary \ref{corollary fan compatibility admissibility equivalence}.

Conversely, suppose that $(\mathcal F,\alpha)$ is $E$-admissible. Then Lemma \ref{lemma fan admissible induces norm} implies that the multiindex induced by $\alpha$ is $E$-admissible and that the function $p_\alpha$ defined on the maximal cones of $\mathcal F$ satisfies

\begin{equation}
p_\alpha(x)=\rho(x;\alpha E),\qquad x\in X.
\end{equation}
\end{proof}

\begin{corollary}\label{corollary fan linear implies E norm}
Let $X$ be a $d$-dimensional Banach space, $E=(x_1,\dots,x_n,-x_1,\dots,-x_n)$ be a direction set and $\mathcal{F}$ be a complete, symmetric, simplicial $E$-fan. If $p\in\mathcal{N}(X)$ is linear on every cone $C\in\mathcal{F}$, then there exists an $E$-admissible multiindex $\alpha$ such that

\begin{equation}
p(x)=\rho(x;\alpha E), \qquad x\in X.
\end{equation}

\noindent In particular,

\begin{equation}
\operatorname{Ext}(B_p)\subset \operatorname{Dir}(E).
\end{equation}
\end{corollary}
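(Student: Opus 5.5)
The plan is to set $\alpha_i = p(x_i)$ for $i=1,\dots,n$, so that $\alpha$ is a symmetric weight on $E$ because $p(-x_i)=p(x_i)$. We will then show that $(\mathcal{F},\alpha)$ is $E$-admissible. Once that is done, Lemma \ref{lemma fan admissible induces norm} (equivalently Corollary \ref{corollary characterization (F,alpha) E-admissible}, part 2) says two things: the multiindex $\alpha$ is $E$-admissible, and $p_\alpha = \rho(\,\cdot\,;\alpha E)$. It then remains to identify $p$ with $p_\alpha$.

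\textbf{Step 1: $p = p_\alpha$.} Let $C=\operatorname{cone}(x_{n_1},\dots,x_{n_d})$ be a maximal cone of $\mathcal{F}$. By Lemma \ref{lema full dimension of generators of maximal cones on fan} it is full dimensional. For $x=\sum_k c_k x_{n_k}$ with $c_k\geq 0$, linearity of $p$ on $C$ gives
\begin{equation}
p(x)=\sum_k c_k\, p(x_{n_k})=\sum_k c_k \alpha_{n_k}=p_\alpha(x).
\end{equation}
Since $\mathcal{F}$ is complete, every $x\in X$ lies in some maximal cone, so $p=p_\alpha$ on all of $X$.

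\textbf{Step 2: admissibility of $(\mathcal{F},\alpha)$.} For a maximal cone $C$, apply Lemma \ref{lemma support functional from linear cone} to $p$. This produces a functional $\ell_C$ with $\ell_C=p$ on $C$ and $\ell_C\le p$ on $X$. In particular $\ell_C(x_{n_k})=p(x_{n_k})=\alpha(x_{n_k})$, so $\ell_C$ is the unique functional $\ell_C^\alpha$ in the definition of an $E$-admissible pair. Moreover $\ell_C^\alpha(x_i)\le p(x_i)=\alpha(x_i)$ for every $x_i\in E$, and this holds for $\pm x_i$ by symmetry. Hence $(\mathcal{F},\alpha)$ is $E$-admissible, and Lemma \ref{lemma fan admissible induces norm} yields that $\alpha$ is $E$-admissible with $p=p_\alpha=\rho(\,\cdot\,;\alpha E)$.

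\textbf{Step 3: the inclusion $\operatorname{Ext}(B_p)\subset\operatorname{Dir}(E)$.} This follows from Theorem \ref{theorem fixed E representation}, direction 2) $\Rightarrow$ 1). One can also see it directly: $B_p=\operatorname{conv}\{\pm\alpha_i^{-1}x_i\}$, and every extreme point of a polytope is one of its generating points.

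\textbf{Main obstacle.} The only delicate point is making sure the functional in Lemma \ref{lemma support functional from linear cone} coincides with the $\ell^\alpha_C$ of the admissibility definition. This is where full dimensionality and simpliciality of the maximal cones are needed: the $d$ generators are linearly independent, so the linear extension of $p|_C$ is uniquely determined by its values on them. Everything else is bookkeeping with earlier results.
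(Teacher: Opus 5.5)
Your proposal is correct and follows the paper's proof essentially step for step. You set $\alpha_i=p(x_i)$, apply Lemma \ref{lemma support functional from linear cone} on each maximal cone to obtain $E$-admissibility of $(\mathcal{F},\alpha)$, identify $p$ with $p_\alpha$ via linearity on the maximal cones, and conclude with Lemma \ref{lemma fan admissible induces norm} and $B_p=\operatorname{conv}(\alpha E)$; the only difference is that you establish $p=p_\alpha$ before the admissibility check rather than after, which does not matter.
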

\begin{proof}
We define $\alpha_i=p(x_i)$ for every $i=1,\dots,n$ and $\alpha=(\alpha_i)$.

First we prove that $(\mathcal{F},\alpha)$ is $E$-admissible. By Lemma \ref{lemma support functional from linear cone} for every maximal cone $\operatorname{cone}(x_{n_1}, \dots , x_{n_d})\in\mathcal{F}$ there exists a unique functional $\ell_C$ such that 

\begin{equation}\label{technical functional equality}
\ell_C(x_{n_k}) = p(x_{n_k})= \alpha(x_{n_k}) , \qquad k=1 , \dots , d.
\end{equation} 

\noindent and

\begin{equation}
\ell_C(x_i) \leq p(x_i)=\alpha(x_i) , \qquad x_i \in E.
\end{equation}

\noindent Hence $(\mathcal{F} , \alpha)$ is $E$-admissible.

Now we prove the equality. By (\ref{technical functional equality}), for every maximal cone $C=\operatorname{cone}(x_{n_1}, \dots , x_{n_d})\in\mathcal{F}$ and $x = \sum_{k=1}^d c_k x_{n_k}$ we have 

\begin{equation}
p(x) = \sum_{k=1}^d c_k \alpha_{n_k}, \qquad x \in C.
\end{equation}

\noindent Thus $p$ coincides with the piecewise linear function $p_\alpha$ associated with $(\mathcal F,\alpha)$. Then by Lemma \ref{lemma fan admissible induces norm} 

\begin{equation}
p(x)=\rho(x;\alpha E), \qquad x\in X.
\end{equation}

Since $B_p=\operatorname{conv}(\alpha E)$, every extreme point of $B_p$ lies on a ray determined by an element of $E$. Then $\operatorname{Ext}(B_p)\subset \operatorname{Dir}(E)$.
\end{proof}

\begin{remark}
The structure of an $\alpha E$-norm can be understood through three layers: the combinatorial layer given by the set of directions $E$, the metric layer given by the weights $\alpha$, and the linearity layer encoded by the associated fan. 

While $E$ determines the available directions, it does not determine the regions of linearity. These are instead captured by the fan, which arises from the interaction between $E$ and $\alpha$.
\end{remark}

\section{Finite Direction, Coordinate, and Fan Models}

\begin{definition}[Finite direction and fan models]
Let $E$ be a direction set of cardinality $2n$. We define the finite direction model as

\begin{equation}
\mathcal{N}(E)=\{p\in \mathcal{N}(X) \mid \operatorname{Ext}(B_p)\subset \operatorname{Dir}(E)\}.
\end{equation}

\noindent This is the geometric model consisting of all polyhedral norms whose extreme points have directions prescribed by $E$. We also define the associated coordinate direction model by

\begin{equation}
\mathcal{R}(E)=\{\alpha\in \mathbb{R}^{n}_{>0} \mid \alpha \text{ is } E\text{-admissible}\}.
\end{equation}

\noindent Thus $\mathcal{R}(E)$ parametrizes the elements of $\mathcal{N}(E)$ by means of $E$-admissible multiindices.

Let $\mathcal{F}$ be a complete, symmetric, simplicial $E$-fan. We define the finite fan model as

\begin{equation}
\mathcal{N}(\mathcal{F})=\{p\in\mathcal{N}(X) \mid p \text{ is linear on every cone } C\in\mathcal{F}\}.
\end{equation}

\noindent This is the geometric submodel of $\mathcal{N}(E)$ obtained by fixing the conical regions of linearity. Similarly, we define the associated coordinate fan model by

\begin{equation}
\mathcal{R}(\mathcal{F})=\{\alpha:E\to(0,\infty) \mid \alpha(-x)=\alpha(x)\text{ and }(\mathcal{F},\alpha)\text{ is }E\text{-admissible}\}.
\end{equation}

\noindent Hence $\mathcal{R}(\mathcal{F})$ parametrizes the norms in $\mathcal{N}(\mathcal{F})$ through weights compatible with the fixed fan $\mathcal{F}$.
\end{definition}

\begin{lemma}\label{lemma parametrization NE NF}
Let $X$ be a finite-dimensional Banach space, $E$ be a direction set and $\mathcal F$ be a complete, symmetric, simplicial $E$-fan. Then

\begin{equation}
\mathcal{R}(\mathcal F)\subset \mathcal{R}(E) \qquad \text{and} \qquad \mathcal{N}(\mathcal F)\subset \mathcal{N}(E).
\end{equation}

\noindent Moreover, the map 

\begin{equation}
\phi_E:\mathcal{R}(E)\to \mathcal{N}(E), \qquad \phi_E(\alpha)=\rho(\,\cdot\,;\alpha E),
\end{equation}

\noindent is bijective. Its restriction

\begin{equation}
\phi_{\mathcal F}:\mathcal{R}(\mathcal F)\to \mathcal{N}(\mathcal F), \qquad \phi_{\mathcal F}(\alpha)=\rho(\,\cdot\,;\alpha E),
\end{equation}

\noindent is also bijective.
\end{lemma}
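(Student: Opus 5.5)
The plan is to deduce every part of the statement from the results of Sections 3 and 4, identifying symmetric weights with multiindices as in Remark \ref{remark:weight multiindex identification}.

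\emph{Inclusion $\mathcal{R}(\mathcal F)\subset\mathcal{R}(E)$.} Let $\alpha\in\mathcal{R}(\mathcal F)$, so $(\mathcal F,\alpha)$ is $E$-admissible. By Lemma \ref{lemma fan admissible induces norm}, or part 2) of Corollary \ref{corollary characterization (F,alpha) E-admissible}, the induced multiindex is $E$-admissible. Hence $\alpha\in\mathcal{R}(E)$.

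\emph{Inclusion $\mathcal{N}(\mathcal F)\subset\mathcal{N}(E)$.} Let $p\in\mathcal{N}(\mathcal F)$, so $p$ is linear on every cone of $\mathcal F$. Corollary \ref{corollary fan linear implies E norm} gives $\operatorname{Ext}(B_p)\subset\operatorname{Dir}(E)$, that is, $p\in\mathcal{N}(E)$.

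\emph{Bijectivity of $\phi_E$.} First, $\phi_E$ is well defined: for $\alpha\in\mathcal{R}(E)$ the implication $2)\Rightarrow1)$ of Theorem \ref{theorem fixed E representation} gives $\rho(\,\cdot\,;\alpha E)\in\mathcal{N}(E)$. Surjectivity is the implication $1)\Rightarrow2)$ of the same theorem. Injectivity is Remark \ref{remark non redundancy in representation of alpha E norms}: if $\rho(\,\cdot\,;\alpha E)=\rho(\,\cdot\,;\beta E)$, then by item 4) of Lemma \ref{lemma properties of E-admissible multiindex} we get $\alpha_i=\rho(x_i;\alpha E)=\rho(x_i;\beta E)=\beta_i$ for every $i$.

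\emph{The restriction $\phi_{\mathcal F}$.} Since $\mathcal{R}(\mathcal F)\subset\mathcal{R}(E)$, the map $\phi_{\mathcal F}$ is a restriction of $\phi_E$ and is therefore injective. It remains to show that $\phi_E(\mathcal{R}(\mathcal F))=\mathcal{N}(\mathcal F)$.

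For the inclusion $\subset$, let $\alpha\in\mathcal{R}(\mathcal F)$. By Corollary \ref{corollary fan compatibility admissibility equivalence}, direction $1)\Rightarrow2)$, the norm $\rho(\,\cdot\,;\alpha E)$ is compatible with $\mathcal F$. This means it is linear on every cone of $\mathcal F$, so it lies in $\mathcal{N}(\mathcal F)$.

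For the inclusion $\supset$, let $p\in\mathcal{N}(\mathcal F)$ and put $\alpha_i=p(x_i)$. The proof of Corollary \ref{corollary fan linear implies E norm} shows two things: $(\mathcal F,\alpha)$ is $E$-admissible, and $p=\rho(\,\cdot\,;\alpha E)$. Hence $\alpha\in\mathcal{R}(\mathcal F)$ and $\phi_{\mathcal F}(\alpha)=p$.

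The main obstacle is this last surjectivity step. The statement of Corollary \ref{corollary fan linear implies E norm} only asserts the existence of some $E$-admissible $\alpha$; it does not say that $(\mathcal F,\alpha)$ is admissible. I would therefore cite the admissibility established inside its proof. Alternatively, I would argue it directly: $\alpha$ is $E$-admissible, $\rho(\,\cdot\,;\alpha E)=p$ is compatible with $\mathcal F$, and direction $2)\Rightarrow1)$ of Corollary \ref{corollary fan compatibility admissibility equivalence} then gives that $(\mathcal F,\alpha)$ is $E$-admissible.
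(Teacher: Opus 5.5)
Your proposal is correct and follows essentially the same route as the paper: the inclusions come from Corollary~\ref{corollary fan compatibility admissibility equivalence} (equivalently Lemma~\ref{lemma fan admissible induces norm}) and Corollary~\ref{corollary fan linear implies E norm}, and bijectivity of $\phi_E$ comes from Theorem~\ref{theorem fixed E representation} together with Remark~\ref{remark non redundancy in representation of alpha E norms}. Your ``alternative'' argument for surjectivity of $\phi_{\mathcal F}$ is exactly the paper's argument: obtain an $E$-admissible $\alpha$ from Corollary~\ref{corollary fan linear implies E norm}, then apply direction $2)\Rightarrow 1)$ of Corollary~\ref{corollary fan compatibility admissibility equivalence}; your explicit check that $\phi_E$ lands in $\mathcal{N}(E)$ is a small step the paper leaves implicit.
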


\begin{proof}
First we prove the inclusions. By Corollary \ref{corollary fan compatibility admissibility equivalence} if $(\mathcal{F}, \alpha)$ is $E$-admissible, then $\alpha$ is $E$-admissible. That is, 

\begin{equation}
\mathcal{R}(\mathcal{F}) \subset \mathcal{R}(E).
\end{equation}

\noindent On the other hand, If $p \in \mathcal{N}(\mathcal{F})$ then by Corollary \ref{corollary fan linear implies E norm}, $\operatorname{Ext}(B_p)\subset \operatorname{Dir}(E)$. That is,

\begin{equation}
\mathcal{N}(\mathcal{F}) \subset \mathcal{N}(E).
\end{equation}

Now we prove that $\phi_E$ is bijective. Let $p\in\mathcal{N}(E)$. Since the extreme points of $B_p$ lie in $\operatorname{Dir}(E)$, Theorem \ref{theorem fixed E representation} gives an $E$-admissible multiindex $\alpha$ such that

\begin{equation}
p(x)=\rho(x;\alpha E),\qquad x\in X.
\end{equation}

\noindent Hence $\phi_E$ is onto. The injectivity follows from Remark \ref{remark non redundancy in representation of alpha E norms}.

Now we prove the fan statement. By definition, $\alpha\in\mathcal{R}(\mathcal{F})$ implies $(\mathcal{F},\alpha)$ is $E$-admissible. Thus by Corollary \ref{corollary fan compatibility admissibility equivalence}, $\rho(\, \cdot \,;\alpha E)$ is linear on every cone of $\mathcal{F}$, that is,

\begin{equation}
\phi_{\mathcal{F}}(\alpha)=\rho(\, \cdot \,;\alpha E)\in\mathcal{N}(\mathcal{F}).
\end{equation}

\noindent Now we suppose that $p \in \mathcal{N}(\mathcal{F})$. By Corollary \ref{corollary fan linear implies E norm}, there exists an $E$-admissible multiindex $\alpha$ such that 

\begin{equation}
p(x) = \rho(x ; \alpha E), \qquad x \in X.
\end{equation}

\noindent Since $p$ is linear on every cone of $\mathcal{F}$, the norm $\rho(\, \cdot \, ; \alpha E)$ is compatible with $\mathcal{F}$. Hence, by Corollary \ref{corollary fan compatibility admissibility equivalence}, $(\mathcal{F}, \alpha)$ is $E$-admissible. That is, $\alpha \in \mathcal{R}(\mathcal{F})$ and

\begin{equation}
\phi_{\mathcal{F}}(\alpha)= \rho(\, \cdot \, ; \alpha E) = p(\, \cdot \, ).
\end{equation}

\noindent Therefore $\phi_{\mathcal{F}}$ is onto. Since $\phi_E$ is injective, its restriction $\phi_{\mathcal{F}}$ is also injective. Therefore $\phi_{\mathcal{F}}$ is a bijection.
\end{proof}

\begin{lemma}\label{lemma parameter sets are cones}
Let $X$ be a finite-dimensional Banach space, $E$ be a direction set of cardinality $2n$, and $\mathcal{F}$ be a complete, symmetric, simplicial $E$-fan. Then $\mathcal{R}(E)$ and $\mathcal{R}(\mathcal{F})$ are cones in $\mathbb{R}^n_{\geq 0}$ without the origin. More precisely, if $\alpha,\beta\in\mathcal{R}(E)$ and $c>0$, then

\begin{equation}
\alpha+\beta\in\mathcal{R}(E) \qquad\text{and}\qquad c \alpha\in\mathcal{R}(E).
\end{equation}

\noindent Similarly, if $\alpha,\beta\in\mathcal{R}(\mathcal{F})$ and $c>0$, then

\begin{equation}
\alpha+\beta\in\mathcal{R}(\mathcal{F})\qquad\text{and}\qquad c\alpha\in\mathcal{R}(\mathcal{F}).
\end{equation}
\end{lemma}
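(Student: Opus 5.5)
The plan is to verify the two closure properties directly from the characterizations already established. For $\mathcal{R}(E)$ I will use the equivalence $1)\Leftrightarrow 8)$ of Lemma \ref{lemma properties of E-admissible multiindex}: a multiindex is $E$-admissible as soon as some norm takes the prescribed values on the $x_i$. For $\mathcal{R}(\mathcal{F})$ I will use the linear dependence of the functionals $\ell_C^\alpha$ on the weight $\alpha$. Positivity of all coordinates is preserved by sums and positive multiples, so every set in question stays inside $\mathbb{R}^n_{>0}$. In particular it avoids the origin.

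\emph{Scaling in $\mathcal{R}(E)$.} Let $\alpha\in\mathcal{R}(E)$ and $c>0$. The plan is to note that
\begin{equation}
\operatorname{conv}(c\alpha E)=c^{-1}\operatorname{conv}(\alpha E),
\end{equation}
which gives $\rho(x;c\alpha E)=c\,\rho(x;\alpha E)$ for all $x\in X$. Condition 4) for $\alpha$ then gives $\rho(x_i;c\alpha E)=c\alpha_i$ for every $i$. By Lemma \ref{lemma properties of E-admissible multiindex}, $4)\Rightarrow 1)$, so $c\alpha$ is $E$-admissible.

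\emph{Addition in $\mathcal{R}(E)$.} Let $\alpha,\beta\in\mathcal{R}(E)$ and define
\begin{equation}
q(x)=\rho(x;\alpha E)+\rho(x;\beta E).
\end{equation}
By Lemma \ref{lemma properties of E-admissible multiindex1} A), $q$ is a sum of two equivalent norms, hence a norm on $X$. By 4) applied to $\alpha$ and to $\beta$, it satisfies $q(x_i)=\alpha_i+\beta_i$. This is precisely statement 8) for the multiindex $\alpha+\beta$. The implication $8)\Rightarrow 1)$ (through $7)$) of Lemma \ref{lemma properties of E-admissible multiindex} then yields $\alpha+\beta\in\mathcal{R}(E)$. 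This is the only step with content. Its point is that we never need to identify the unit ball of $q$; we only need the \emph{existence} of a norm calibrated on the directions $x_i$.

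\emph{Closure of $\mathcal{R}(\mathcal{F})$.} Fix a maximal cone $C=\operatorname{cone}(x_{n_1},\dots,x_{n_d})\in\mathcal{F}$. Its generators form a basis of $X$ by Lemma \ref{lema full dimension of generators of maximal cones on fan}. Hence $\ell_C^\alpha$ is uniquely determined by its values on them, and the assignment $\alpha\mapsto\ell_C^\alpha$ is linear. Concretely, $\ell_C^\alpha+\ell_C^\beta$ takes the values $\alpha(x_{n_k})+\beta(x_{n_k})$ on the generators, so by uniqueness it equals $\ell_C^{\alpha+\beta}$; likewise $\ell_C^{c\alpha}=c\,\ell_C^\alpha$.

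Adding the inequalities $\ell_C^\alpha(x_i)\le\alpha(x_i)$ and $\ell_C^\beta(x_i)\le\beta(x_i)$ gives $\ell_C^{\alpha+\beta}(x_i)\le(\alpha+\beta)(x_i)$. Multiplying the first inequality by $c>0$ gives the corresponding inequality for $c\alpha$. Symmetry $\alpha(-x)=\alpha(x)$ and positivity are clearly preserved under sums and positive multiples. Hence $(\mathcal{F},\alpha+\beta)$ and $(\mathcal{F},c\alpha)$ are $E$-admissible, which completes the argument. No serious obstacle is expected; the only care needed is to invoke the uniqueness of $\ell_C^\alpha$ to justify the additivity.
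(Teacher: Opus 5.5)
Your proof is correct and follows the paper's argument. You handle the sum in $\mathcal{R}(E)$ via the norm $\rho(\cdot\,;\alpha E)+\rho(\cdot\,;\beta E)$ and condition 8) of the characterization lemma, and you close $\mathcal{R}(\mathcal{F})$ under sums and positive multiples by adding and scaling the functionals $\ell_C^\alpha$. The differences are cosmetic: for scaling in $\mathcal{R}(E)$ you go through $\rho(\cdot\,;c\alpha E)=c\,\rho(\cdot\,;\alpha E)$ and condition 4), where the paper applies 8) to the norm $c\,\rho(\cdot\,;\alpha E)$, and you make explicit the uniqueness step $\ell_C^\alpha+\ell_C^\beta=\ell_C^{\alpha+\beta}$ that the paper leaves implicit.
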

\begin{proof}
We assume $E = (x_1 , \dots, x_n , - x_1 , \dots , - x_n)$. 

First we prove that $\mathcal{R}(E)$ is a cone. Let $\alpha=(\alpha_1 , \dots , \alpha_n)$ and $\beta = (\beta_1 , \dots , \beta_n)$ be two $E$-admissible multi-indices and $c >0$. By Lemma \ref{lemma properties of E-admissible multiindex} for each $i  = 1 , \dots , n $ we have $\rho(x_i ; \alpha E) = \alpha_i$ and $\rho(x_i ; \beta E) = \beta_i$. Thus by the definition of the sum of functions $\| \cdot \| = \rho(\, \cdot \, ; \alpha E) + \rho(\, \cdot \, ; \beta E) $ satisfies 

\begin{equation}
\begin{split}
\| x_i \|  & = \rho(x_i ; \alpha E) +\rho(x_i ; \beta E) \\
           & = \alpha_i + \beta_i.
\end{split}
\end{equation}

\noindent Hence according to Lemma \ref{lemma properties of E-admissible multiindex}, $\alpha + \beta$ is an $E$-admissible multiindex. 

On the other hand, the $E$-admissibility of $\alpha$ implies $c \,\rho(x_i ; \alpha E) = c \alpha_i$ for every $i = 1 , \dots , n$. Then by Lemma \ref{lemma properties of E-admissible multiindex}, $c\alpha$ is an $E$-admissible multiindex.

Now we prove that $\mathcal{R}(\mathcal{F})$ is a cone. Let $\alpha , \beta \in \mathcal{R}(\mathcal{F})$ and $c > 0$. by the $E$-admissibility of $(\mathcal{F}, \alpha)$ and $(\mathcal{F}, \beta)$, for each maximal cone $C = \operatorname{cone}(x_{n_1} , \dots , x_{n_d}) \in \mathcal{F}$, there exists unique functionals $\ell_C^\alpha$ and $\ell_C^\beta$ such that 

\begin{equation}\label{technical supports for admissibility 1}
\ell_C^{\alpha}(x_{n_k}) = \alpha(x_{n_k}), \qquad  \ell_C^{\beta}(x_{n_k}) = \beta(x_{n_k}), \qquad k= 1 , \dots , d
\end{equation}

\noindent and

\begin{equation}\label{technical supports for admissibility 2}
\ell_C^{\alpha}(x_i) \leq \alpha(x_i),  \qquad \ell_C^{\beta}(x_i) \leq \beta(x_i), \qquad x_i \in E.
\end{equation}

\noindent Hence, for every maximal cone $C = \operatorname{cone}(x_{n_1} , \dots , x_{n_d}) \in \mathcal{F}$ we define $\ell_C^{\alpha + \beta} = \ell_C^\alpha + \ell_C^\beta$. Then

\begin{equation}
\ell_C^{\alpha + \beta}(x_{n_k}) = \alpha(x_{n_k}) + \beta(x_{n_k}), \qquad k= 1 , \dots , d
\end{equation}

\noindent and

\begin{equation}
\ell_C^{\alpha + \beta}(x_i) \leq \alpha(x_i) + \beta(x_i), \qquad x_i \in E.
\end{equation}

\noindent That is, $(\mathcal{F}, \alpha + \beta)$ is $E$-admissible and $\alpha + \beta \in \mathcal{R}(\mathcal{F})$. In the case of $c \alpha$, the proof is similar. We define for every maximal cone $\ell_C^{c \alpha} = c \, \ell_C^\alpha$. This family clearly satisfies (\ref{technical supports for admissibility 1}) and (\ref{technical supports for admissibility 2}). Hence $c \alpha \in \mathcal{R}(\mathcal{F})$. 
\end{proof}

\begin{theorem}\label{theorem cone structure of alpha E norms}
Let $X$ be a finite-dimensional Banach space, let $E$ be a direction set of cardinality $2n$, and let $\mathcal{F}$ be a complete, symmetric, simplicial $E$-fan. Then the following statements hold.

\begin{itemize}
\item[1)] For every $\alpha\in\mathcal{R}(E)$ and every $c>0$,

\begin{equation}
c \, \rho(\, \cdot \,;\alpha E)=\rho(\, \cdot \,;c\alpha E).
\end{equation}

\noindent In particular, $\mathcal{N}(E)$ is closed under positive scalar multiplication.

\item[2)] The set $\mathcal{N}(\mathcal{F})$ is a cone in $\mathcal{N}(X)$, with respect to pointwise addition and positive scalar multiplication of norms. Moreover, the map

\begin{equation}
\phi_{\mathcal{F}}:\mathcal{R}(\mathcal{F}) \to \mathcal{N}(\mathcal{F}), \qquad \phi_{\mathcal{F}}(\alpha)=\rho(\, \cdot \,; \alpha E),
\end{equation}

\noindent is an isomorphism of cones. Equivalently, for every $\alpha, \beta \in \mathcal{R}(\mathcal{F})$ and every $c>0$,

\begin{equation}
\rho(\, \cdot \, ;(\alpha + \beta ) E) = \rho(\, \cdot \,; \alpha E) + \rho(\, \cdot \,; \beta E),
\end{equation}

\noindent and

\begin{equation}
\rho(\, \cdot \,; c \alpha E) = c \, \rho(\, \cdot \,; \alpha E).
\end{equation}

\item[3)] If $\dim X=2$, then $\mathcal{N}(E)$ is a cone in $\mathcal{N}(X)$, and the map

\begin{equation}
\phi_E:\mathcal{R}(E)\to\mathcal{N}(E), \qquad \phi_E(\alpha)=\rho(\, \cdot \,;\alpha E),
\end{equation}

\noindent is an isomorphism of cones. Equivalently, for every $\alpha,\beta \in \mathcal{R}(E)$ and every $c>0$,

\begin{equation}
\rho(\, \cdot \,; (\alpha+\beta) E) = \rho(\, \cdot \,; \alpha E) + \rho(\, \cdot \,; \beta E),
\end{equation}

\noindent and

\begin{equation}
\rho(\, \cdot \,; c \alpha E) = c\, \rho(\, \cdot \,;\alpha E).
\end{equation}
\end{itemize}
\end{theorem}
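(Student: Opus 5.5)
For part 1) I will argue directly from the Minkowski functional. Since $\operatorname{conv}(c\alpha E)=\operatorname{conv}\{\pm c^{-1}\alpha_i^{-1}x_i\}=c^{-1}\operatorname{conv}(\alpha E)$, we get
\begin{equation}
\rho(x;c\alpha E)=\inf\{r>0\mid x\in rc^{-1}\operatorname{conv}(\alpha E)\}=c\,\rho(x;\alpha E).
\end{equation}
By Lemma \ref{lemma parameter sets are cones}, $c\alpha\in\mathcal{R}(E)$. Hence $c\rho(\,\cdot\,;\alpha E)=\phi_E(c\alpha)\in\mathcal{N}(E)$, and surjectivity of $\phi_E$ (Lemma \ref{lemma parametrization NE NF}) shows that $\mathcal{N}(E)$ is closed under positive scaling. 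This homogeneity identity also gives the scalar part of 2) and 3).

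For part 2), the key point is additivity. Let $\alpha,\beta\in\mathcal{R}(\mathcal{F})$. By Lemma \ref{lemma parameter sets are cones}, $\alpha+\beta\in\mathcal{R}(\mathcal{F})$, so Lemma \ref{lemma fan admissible induces norm} (equivalently Corollary \ref{corollary characterization (F,alpha) E-admissible} 2)) applies to the three weights $\alpha$, $\beta$, $\alpha+\beta$ with the same fan $\mathcal{F}$. Fix $x$ in a maximal cone $C=\operatorname{cone}(x_{n_1},\dots,x_{n_d})$ and write $x=\sum_k c_kx_{n_k}$ with $c_k\ge0$. These conical coordinates depend only on $\mathcal{F}$, not on the weight. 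Therefore
\begin{equation}
\rho(x;(\alpha+\beta)E)=p_{\alpha+\beta}(x)=\sum_k c_k(\alpha+\beta)(x_{n_k})=p_\alpha(x)+p_\beta(x)=\rho(x;\alpha E)+\rho(x;\beta E).
\end{equation}

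This identity shows that $\mathcal{N}(\mathcal{F})$ is closed under addition. Indeed, any $p,q\in\mathcal{N}(\mathcal{F})$ are of the form $\phi_{\mathcal{F}}(\alpha)$ and $\phi_{\mathcal{F}}(\beta)$ by Lemma \ref{lemma parametrization NE NF}, so $p+q=\phi_{\mathcal{F}}(\alpha+\beta)\in\mathcal{N}(\mathcal{F})$. One could also see this directly, since a sum of norms linear on each cone is linear on each cone. Closure under scaling follows in the same way from part 1). Bijectivity of $\phi_{\mathcal{F}}$ comes from Lemma \ref{lemma parametrization NE NF}, and together with additivity and positive homogeneity this makes $\phi_{\mathcal{F}}$ a cone isomorphism. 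Its inverse also respects the operations, because it is the inverse of a bijective additive and homogeneous map.

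For part 3), take $\dim X=2$ and use the canonical fan $\mathcal{F}_E$ of Lemma \ref{lemma rigidity of two dimensional structure of fans}. That lemma says every $E$-admissible $\alpha$ makes $(\mathcal{F}_E,\alpha)$ $E$-admissible, so $\mathcal{R}(E)\subset\mathcal{R}(\mathcal{F}_E)$. Combined with the inclusion $\mathcal{R}(\mathcal{F}_E)\subset\mathcal{R}(E)$ from Lemma \ref{lemma parametrization NE NF}, we get $\mathcal{R}(E)=\mathcal{R}(\mathcal{F}_E)$. Since $\phi_E$ and $\phi_{\mathcal{F}_E}$ are both bijective onto their images, it follows that $\mathcal{N}(E)=\phi_E(\mathcal{R}(E))=\phi_{\mathcal{F}_E}(\mathcal{R}(\mathcal{F}_E))=\mathcal{N}(\mathcal{F}_E)$. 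Part 3) is then part 2) applied to $\mathcal{F}=\mathcal{F}_E$.

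The one genuinely delicate step is the additivity in 2). It relies on all three weights sharing one fan, so that a single conical decomposition of $x$ computes all three norms simultaneously. Everything else is bookkeeping with the earlier lemmas. In general dimension, $\mathcal{N}(E)$ need not be additively closed, because $\alpha$ and $\beta$ may require incompatible fans. This is precisely why 3) needs the two-dimensional rigidity lemma.
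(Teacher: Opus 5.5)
Your proposal is correct and follows the paper's proof essentially step for step: the rescaling $\operatorname{conv}(c\alpha E)=c^{-1}\operatorname{conv}(\alpha E)$ for 1); one conical decomposition shared by $\alpha$, $\beta$, $\alpha+\beta$ via Lemma~\ref{lemma fan admissible induces norm}, together with the bijectivity from Lemma~\ref{lemma parametrization NE NF}, for 2); and the canonical fan of Lemma~\ref{lemma rigidity of two dimensional structure of fans} for 3), where your coordinate-level identity $\mathcal{R}(E)=\mathcal{R}(\mathcal{F}_E)$ is just a slightly more direct packaging of the paper's $\mathcal{N}(E)=\mathcal{N}(\mathcal{F}_E)$. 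One caveat you inherit from the paper is that injectivity of $\phi_{\mathcal{F}}$ rests on $\mathcal{R}(\mathcal{F})\subset\mathcal{R}(E)$, which tacitly requires every element of $E$ to span a ray of $\mathcal{F}$ (true for $\mathcal{F}_E$), because otherwise it fails: for the quadrant fan in Euclidean $\mathbb{R}^2$ with $E=\{\pm e_1,\pm e_2,\pm u\}$ and $u=(e_1+e_2)/\sqrt{2}$, the weights $(1,1,2)$ and $(1,1,3)$ both lie in $\mathcal{R}(\mathcal{F})$ and both give the $\ell^1$ norm.
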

\begin{proof}
$1)$. Recall that $c\alpha E = \{\pm c^{-1} \alpha_1^{-1} x_1 , \dots , \pm c^{-1} \alpha_n^{-1} x_n \}$, therefore

\begin{equation}
\operatorname{conv}(c \alpha E) = c^{-1} \operatorname{conv}(\alpha E).
\end{equation}

\noindent Then, by the definition of the Minkowski functional and setting $s = r c^{-1}$, for each $x \in X$ we obtain

\begin{equation}\label{technical homogeneity of phi_F(alpha)}
\begin{split}
\rho(x;c\alpha E) &= \inf\{r>0 \mid x\in r\,\operatorname{conv}(c\alpha E)\}  \\
                  &= \inf\{r>0 \mid x\in r c^{-1}\operatorname{conv}(\alpha E)\} \\
                  &= c \,\inf\{s>0 \mid x\in s \operatorname{conv}(\alpha E)\} \\
                  &= c \,\rho(x;\alpha E).
\end{split}
\end{equation}

$2)$ Let $\alpha , \beta \in  \mathcal{R}(\mathcal{F})$ and $c> 0$. By Lemma \ref{lemma parameter sets are cones}, $c \alpha \in \mathcal{R}(\mathcal{F})$. Thus using (\ref{technical homogeneity of phi_F(alpha)}) we have

\begin{equation}\label{technical homogeneity of phi_F}
\phi_\mathcal{F}(c \alpha) = \rho(\, \cdot \, ; c \alpha E) = c \rho(\, \cdot \, ; \alpha E) = c \phi_\mathcal{F}(\alpha).
\end{equation}

\noindent By the completeness of $\mathcal{F}$ and Lemma \ref{lemma fan admissible induces norm}, for each $x \in X$ there exist a maximal cone $C = \operatorname{cone}(x_{n_1} , \dots , x_{n_d}) \in \mathcal{F}$ and coordinates $c_1 , \dots , c_d \geq 0$ such that $x = \sum_{k=1}^d c_k x_{n_k} $,

\begin{equation}\label{technical PL representation 1}
\rho(x; \alpha E) = \sum_{k=1}^d c_k \alpha(x_{n_k}), \qquad \rho(x ; \beta E) = \sum_{k=1}^d c_k \beta(x_{n_k}).
\end{equation}

\noindent Since Lemma \ref{lemma parameter sets are cones} implies $\alpha + \beta \in \mathcal{R}(\mathcal{F})$, Lemma \ref{lemma fan admissible induces norm} gives

\begin{equation}\label{technical PL representation 2}
\begin{split}
\rho(x ; (\alpha+ \beta) E) & = \sum_{k=1}^d c_k (\alpha + \beta)(x_{n_k})\\
                            & = \sum_{k=1}^d c_k (\alpha(x_{n_k}) + \beta(x_{n_k})).
\end{split}
\end{equation}

\noindent By (\ref{technical PL representation 1}) and (\ref{technical PL representation 2}) we obtain

\begin{equation}\label{technical additivity of phi_F}
\phi_\mathcal{F}(\alpha + \beta)= \phi_\mathcal{F}(\alpha) + \phi_\mathcal{F}(\beta)
\end{equation}

\noindent By (\ref{technical homogeneity of phi_F}) and (\ref{technical additivity of phi_F}), $\phi_\mathcal{F}$ preserves addition and positive scalar multiplication. By Lemma \ref{lemma parametrization NE NF}, $\phi_\mathcal{F}$ is a bijection. Therefore $\phi_\mathcal{F}$ is an isomorphism of cones between $\mathcal{R}(\mathcal{F})$ and $\mathcal{N}(\mathcal{F})$.

$3)$. We observe that it is enough to prove that $\mathcal{N}(E)=\mathcal{N}(\mathcal{F}_E)$ for a suitable complete, symmetric, simplicial $E$-fan $\mathcal{F}_E$. 

Since $X$ is two-dimensional, by Lemma \ref{lemma rigidity of two dimensional structure of fans}, there exists a canonical complete, symmetric, simplicial $E$-fan $\mathcal{F}_E$, such that for every $\alpha \in \mathcal{R}(E)$, the pair $(\mathcal{F}_E, \alpha)$ is $E$-admissible. Then, by Corollary \ref{corollary fan compatibility admissibility equivalence}, $\rho(\, \cdot \, ; \alpha E)$ is compatible with $\mathcal{F}_E$ for each $\alpha \in \mathcal{R}(E)$.

Let $p\in\mathcal{N}(E)$. By Lemma \ref{lemma parametrization NE NF}, there exists $\alpha\in\mathcal{R}(E)$ such that

\begin{equation}
p=\rho(\,\cdot\,;\alpha E).
\end{equation}

\noindent Hence $p$ is compatible with $\mathcal{F}_E$, and therefore $p\in\mathcal{N}(\mathcal{F}_E)$. Thus

\begin{equation}
\mathcal{N}(E)\subset \mathcal{N}(\mathcal{F}_E).
\end{equation}

\noindent Since the opposite inclusion follows from Lemma \ref{lemma parametrization NE NF}, we conclude that

\begin{equation}
\mathcal{N}(E)=\mathcal{N}(\mathcal{F}_E).
\end{equation}
\end{proof}

\begin{definition}
Let $\sim$ be the collinearity relation, that is, $x \sim y$ if there exists $c > 0$ such that $y = c x$. We define 

\begin{equation}
\mathcal{R}'(E) = \mathcal{R}(E) / \sim
\end{equation}

\noindent and 

\begin{equation}
\mathcal{N}'(E) = \mathcal{N}(E)/ \sim
\end{equation}
\end{definition}

In the following theorem we use the Hilbert $d_H$ and $d$ metrics defined on preliminaries section. 

\begin{theorem}\label{theorem isometric models}
Let $X$ be a finite-dimensional Banach space and $E$ be a direction set. Then the map

\begin{equation}
\phi'_E: (\mathcal{R}'(E), d_H) \to (\mathcal{N}'(E),d) , \qquad \phi'_E(\overline{\alpha}) = \overline{\rho(\, \cdot \, ; \alpha E)}
\end{equation}

\noindent is an isometry. Moreover, if $\mathcal{F}$ is a complete, symmetric, simplicial $E$-fan, then the map

\begin{equation}
\phi'_\mathcal{F}:(\mathcal{R}'(\mathcal{F}),d_H) \to (\mathcal{N}'(\mathcal{F}),d) , \qquad \phi'_\mathcal{F}(\overline{\alpha}) = \overline{\rho(\, \cdot \, ; \alpha E)}
\end{equation}

\noindent is an isometry.
\end{theorem}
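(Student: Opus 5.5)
The plan is to reduce everything to one identity: for all $\alpha,\beta\in\mathcal{R}(E)$ the optimal distortion constants between $\rho(\,\cdot\,;\beta E)$ and $\rho(\,\cdot\,;\alpha E)$ are
\begin{equation*}
u=\sup_{x\neq 0}\frac{\rho(x;\alpha E)}{\rho(x;\beta E)}=M(\alpha,\beta),\qquad
l=\inf_{x\neq 0}\frac{\rho(x;\alpha E)}{\rho(x;\beta E)}=m(\alpha,\beta).
\end{equation*}
Once this holds, $d(\overline{\rho(\cdot;\beta E)},\overline{\rho(\cdot;\alpha E)})=\log(M/m)=d_H(\overline{\beta},\overline{\alpha})$. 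The remaining points are then formal.

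\emph{Well-definedness.} By Theorem \ref{theorem cone structure of alpha E norms} 1), $\rho(\,\cdot\,;c\alpha E)=c\,\rho(\,\cdot\,;\alpha E)$, and Lemma \ref{lemma parameter sets are cones} gives $c\alpha\in\mathcal{R}(E)$. Hence $\phi'_E$ does not depend on the representative. Note also that $\mathcal{R}'(E)\subset\mathbb{P}(\mathbb{R}^n_{>0})$, so $d_H$ makes sense there.

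\emph{Bijectivity.} This descends from Lemma \ref{lemma parametrization NE NF}. If $\rho(\,\cdot\,;\alpha E)=c\,\rho(\,\cdot\,;\beta E)=\rho(\,\cdot\,;c\beta E)$, then Remark \ref{remark non redundancy in representation of alpha E norms} gives $\alpha=c\beta$. Being distance preserving, $\phi'_E$ is in any case injective.

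\emph{Lower bounds.} By admissibility and Lemma \ref{lemma properties of E-admissible multiindex} 4), $\rho(x_i;\alpha E)=\alpha_i$ and $\rho(x_i;\beta E)=\beta_i$. Testing $x=x_i$ therefore gives $u\ge \max_i\alpha_i/\beta_i$ and $l\le\min_i\alpha_i/\beta_i$.

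\emph{Upper bound $u\le M$.} This is the step needing an actual argument. Let $x\neq0$ and, by homogeneity, assume $\rho(x;\beta E)=1$. Compactness of $\operatorname{conv}(\beta E)$ gives $x\in\operatorname{conv}(\beta E)$, so we can write
\begin{equation*}
x=\sum_{j=1}^n t_j\,\beta_j^{-1}x_j,\qquad \sum_j|t_j|\le 1,
\end{equation*}
where signs are absorbed into $t_j$ and $\pm\beta_j^{-1}x_j$ are the generators. Then the triangle inequality, symmetry of the norm $\rho(\,\cdot\,;\alpha E)$ and the admissibility identity give
\begin{equation*}
\rho(x;\alpha E)\le\sum_j|t_j|\,\beta_j^{-1}\rho(x_j;\alpha E)=\sum_j|t_j|\,\frac{\alpha_j}{\beta_j}\le M(\alpha,\beta).
\end{equation*}

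\emph{Bound $l\ge m$.} Swapping the roles of $\alpha$ and $\beta$ yields $\sup\rho(\cdot;\beta E)/\rho(\cdot;\alpha E)\le M(\beta,\alpha)=1/m(\alpha,\beta)$, that is, $l\ge m$.

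The only delicate point is the upper bound, and it relies on admissibility only for $\alpha$. The facts used there are the representation of points in the symmetric hull with $\ell^1$-bounded coefficients and the equality $\rho(x_j;\alpha E)=\alpha_j$ rather than mere $\le$. Actually the inequality $\le\alpha_j$ from Lemma \ref{lemma properties of E-admissible multiindex1} B) already suffices for this direction, while equality is what makes the bounds sharp.

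\emph{Fan case.} By Lemma \ref{lemma parametrization NE NF}, $\mathcal{R}(\mathcal F)\subset\mathcal{R}(E)$, $\mathcal{N}(\mathcal F)\subset\mathcal{N}(E)$, and $\phi_{\mathcal F}$ is a bijection that is the restriction of $\phi_E$. These properties pass to the quotients because both sets are closed under positive scaling (Lemma \ref{lemma parameter sets are cones} and Theorem \ref{theorem cone structure of alpha E norms} 2)). Hence $\phi'_{\mathcal F}$ is the restriction of the isometry $\phi'_E$ and is a bijection onto $\mathcal{N}'(\mathcal F)$, so it is an isometry as well.
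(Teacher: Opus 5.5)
Your proposal is correct and follows essentially the same route as the paper. Well-definedness and bijectivity come from Lemma \ref{lemma parametrization NE NF} and the scaling results. The inequality $d_H\le d$ comes from evaluating at the $x_i$ using admissibility. The inequality $d\le d_H$ comes from comparing the two polytopes, and the fan case is obtained as a restriction of $\phi'_E$. The only cosmetic difference is in that comparison step: you bound $\rho(\,\cdot\,;\alpha E)$ on $\operatorname{conv}(\beta E)$ via the triangle inequality applied to an $\ell^1$-bounded representation, whereas the paper derives the inclusions $m\operatorname{conv}(\alpha E)\subset\operatorname{conv}(\beta E)\subset M\operatorname{conv}(\alpha E)$ directly from segment inclusions; both need only $\rho(x_j;\alpha E)\le\alpha_j$ for that direction.
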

\begin{proof}
Let $\phi'_E:\mathcal{R}'(E)\to \mathcal{N}'(E)$ and $\phi'_\mathcal{F} : \mathcal{R}'(\mathcal{F})\to \mathcal{N}'(\mathcal{F})$ be defined by

\begin{equation} 
\phi'_E(\overline{\alpha})=\overline{\rho(\, \cdot \,;\alpha E)}, \qquad \phi'_\mathcal{F}(\overline{\alpha})=\overline{\rho(\, \cdot \,;\alpha E)}.
\end{equation}

\noindent By Lemma \ref{lemma parametrization NE NF}, the maps $\phi_E$ and $\phi_\mathcal{F}$ are bijective. Moreover, by Theorem \ref{theorem cone structure of alpha E norms}, they preserve positive scalar multiplication. Therefore, $\phi'_E$ and $\phi'_\mathcal{F}$ are well defined and bijective. That is, the following diagrams are commutative

\begin{equation}
\begin{tikzcd}
\mathcal{R}(E) \arrow[r, "\phi_E"] \arrow[d, "\sim"'] & \mathcal{N}(E) \arrow[d, "\sim"] \\
\mathcal{R}'(E) \arrow[r, "\phi_E'"']                                 & \mathcal{N}'(E)
\end{tikzcd}
\qquad \qquad
\begin{tikzcd}
\mathcal{R}(\mathcal{F}) \arrow[r, "\phi_{\mathcal{F}}"] \arrow[d, "\sim"'] & \mathcal{N}(\mathcal{F}) \arrow[d, "\sim"] \\
\mathcal{R}'(\mathcal{F}) \arrow[r, "\phi_{\mathcal{F}}'"'] & \mathcal{N}'(\mathcal{F})
\end{tikzcd}
\end{equation}

\noindent By Lemma \ref{lemma parametrization NE NF} we have that $\phi_\mathcal{F}$ is the restriction of $\phi_E$ to $\mathcal{R}(\mathcal{F})$. Thus $\phi'_\mathcal{F}$ is the restriction of $\phi'_E$ to $\mathcal{R}'(\mathcal{F})$. In diagram form

\begin{equation}
\begin{tikzcd}
\mathcal{R}'(\mathcal{F}) \arrow[r, "\phi'_{\mathcal{F}}"] \arrow[d, hook] & \mathcal{N}'(\mathcal{F}) \arrow[d, hook] \\
\mathcal{R}'(E) \arrow[r, "\phi'_E"] & \mathcal{N}'(E)
\end{tikzcd}
\end{equation}

\noindent Hence, it is enough to prove the isometry of $\phi'_E$ to ensure that of $\phi'_\mathcal{F}$.

We assume $E= (x_1 , \dots, x_n , -x_1, \dots, - x_n)$. Let $\alpha , \beta \in \mathcal{R}(E)$ with $\alpha = (\alpha_1 , \dots, \alpha_n)$ and $\beta = (\beta_1 , \dots, \beta_n)$. Set

\begin{equation}
m=\min_i \frac{\alpha_i}{\beta_i},\qquad M=\max_i \frac{\alpha_i}{\beta_i}.
\end{equation}

First we prove $d \leq d_H$. Since

\begin{equation}
m\beta_i \leq \alpha_i \leq M\beta_i,
\end{equation}

\noindent then

\begin{equation}
[0, m \alpha^{-1}_i x_i] \subset[0, \beta^{-1}_i x _i] \subset [0, M \alpha^{-1}_i x_i].
\end{equation}

\noindent Hence 

\begin{equation}
m\,\operatorname{conv}(\alpha E)\subset \operatorname{conv}(\beta E)\subset M\,\operatorname{conv}(\alpha E).
\end{equation}

\noindent Thus passing to Minkowski functionals, we obtain

\begin{equation}
\frac{1}{M}\rho(\, \cdot \,;\alpha E) \leq \rho(\, \cdot \, ;\beta E) \leq \frac{1}{m}\rho(\, \cdot \, ;\alpha E).
\end{equation}

\noindent Hence

\begin{equation}
d\left(\phi'_E(\overline{\alpha}),\phi'_E(\overline{\beta})\right) \leq \log\left(\frac{M}{m}\right)=d_H(\overline{\alpha},\overline{\beta}).
\end{equation}

Now we prove $d_H \leq d$. Let $u \geq l>0$ be the optimal constants such that

\begin{equation}
l\,\rho(\, \cdot \, ;\alpha E)\leq \rho(\, \cdot \, ;\beta E)\leq u\,\rho(\, \cdot \, ;\alpha E).
\end{equation}

\noindent By Lemma \ref{lemma properties of E-admissible multiindex} we have

\begin{equation}
\rho(x_i;\alpha E)=\alpha_i,\qquad \rho(x_i;\beta E)=\beta_i,
\end{equation}

\noindent thus

\begin{equation}
l\alpha_i\leq \beta_i\leq u\alpha_i.
\end{equation}

\noindent Therefore

\begin{equation}
l\leq \min_i \frac{\beta_i}{\alpha_i},\qquad \max_i \frac{\beta_i}{\alpha_i}\leq u.
\end{equation}

\noindent Then

\begin{equation}
l\leq \frac{1}{M},\qquad \frac{1}{m}\leq u.
\end{equation} 

\noindent Hence

\begin{equation}
d_H(\overline{\alpha},\overline{\beta}) = \log\left(\frac{M}{m}\right) \leq  \log\left(\frac{u}{l}\right) =d \left(\phi'_E(\overline{\alpha}),\phi'_E(\overline{\beta})\right).
\end{equation}

\noindent We conclude that

\begin{equation}
d \left(\phi'_E(\overline{\alpha}),\phi'_E(\overline{\beta})\right) = d_H(\overline{\alpha},\overline{\beta}).
\end{equation}
\end{proof}

\section{Completeness}

In this section we establish the completeness of the finite-dimensional models and show that they are compatible with the global metric structure.

\begin{lemma}\label{lemma:pointwise convergence of alpha implies admissibility}
Let $X$ be a finite-dimensional Banach space and $E$ be a direction set of cardinality $2k$. Let $(\alpha_n)$ be a sequence of $E$-admissible multiindices such that

\begin{equation}
\alpha_i^n \to \alpha_i > 0 , \qquad  i= 1,\dots,k.
\end{equation}

\noindent Then the multiindex $\alpha=(\alpha_1,\dots,\alpha_k)$ is $E$-admissible and satisfies

\begin{equation}
\rho(x;\alpha_n E) \to \rho(x;\alpha E), \quad x\in X.
\end{equation}
\end{lemma}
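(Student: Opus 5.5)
The proof rests on the distortion estimate already obtained in the first half of the proof of Theorem~\ref{theorem isometric models}. That argument did not use admissibility. It shows that for any two positive multiindices $\alpha,\beta$, with $m=\min_i \alpha_i/\beta_i$ and $M=\max_i\alpha_i/\beta_i$, one has
\begin{equation*}
m\,\operatorname{conv}(\alpha E)\subset \operatorname{conv}(\beta E)\subset M\,\operatorname{conv}(\alpha E),
\end{equation*}
and hence $M^{-1}\rho(\,\cdot\,;\alpha E)\le \rho(\,\cdot\,;\beta E)\le m^{-1}\rho(\,\cdot\,;\alpha E)$.

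I would apply this estimate with $\beta=\alpha_n$. Set $m_n=\min_i \alpha_i/\alpha_i^n$ and $M_n=\max_i\alpha_i/\alpha^n_i$. Since each $\alpha_i^n\to\alpha_i>0$ and there are only finitely many indices, both $m_n\to1$ and $M_n\to1$. Therefore
\begin{equation*}
\bigl|\rho(x;\alpha_nE)-\rho(x;\alpha E)\bigr|\le \max\{|M_n^{-1}-1|,|m_n^{-1}-1|\}\,\rho(x;\alpha E)\longrightarrow 0 .
\end{equation*}
This gives the pointwise convergence, and in fact uniform convergence on bounded sets. In particular, for every $x\in X$ the positive numbers $\rho(x;\alpha_n E)$ converge to $\rho(x;\alpha E)$.

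Admissibility then follows from characterization 4) of Lemma~\ref{lemma properties of E-admissible multiindex}. Since each $\alpha_n$ is $E$-admissible, that lemma gives $\rho(x_i;\alpha_nE)=\alpha_i^n$ for every $i$. Letting $n\to\infty$ and using the convergence just proved at $x=x_i$ yields
\begin{equation*}
\rho(x_i;\alpha E)=\lim_n\alpha_i^n=\alpha_i,\qquad i=1,\dots,k.
\end{equation*}
Applying the same lemma in the other direction, 4) implies 1), so $\alpha$ is $E$-admissible.

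The only point needing care is to make sure the sandwich inclusion is justified for non-admissible weights, since $\alpha$ is not yet known to be admissible when we use it. This is immediate: the inclusion compares the generating points segment by segment, $[0,m\alpha_i^{-1}x_i]\subset[0,\beta_i^{-1}x_i]\subset[0,M\alpha_i^{-1}x_i]$, and then takes convex hulls. Neither step uses admissibility.
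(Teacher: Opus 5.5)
Your proof is correct and follows essentially the same route as the paper. The paper also sandwiches $\operatorname{conv}(\alpha_n E)$ between $(1+\varepsilon)^{-1}\operatorname{conv}(\alpha E)$ and $(1-\varepsilon)^{-1}\operatorname{conv}(\alpha E)$, passes to Minkowski functionals, evaluates at $x_i$ using $\rho(x_i;\alpha_nE)=\alpha_i^n$, and concludes admissibility from characterization 4); the differences are only cosmetic, namely that you encode the sandwich through $m_n, M_n$ from the isometry proof and prove pointwise convergence before specializing to $x=x_i$, whereas the paper does the admissibility step first.
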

\begin{proof}
Let $0 < \varepsilon < 1$. Since $\alpha_i^n\to \alpha_i$ for each $i= 1 , \dots , k$, there exists $N\in\mathbb{N}$ such that for all $n\geq N$

\begin{equation}
(1-\varepsilon)\alpha_i \leq \alpha_i^n \leq (1+\varepsilon)\alpha_i , \qquad  i = 1,\dots,k.
\end{equation}

\noindent Hence

\begin{equation}
[0 , (1 + \varepsilon)^{-1}\alpha^{-1}_i x_i] \subset [0 , (\alpha^n_i)^{-1} x_i] \subset [0 , (1 - \varepsilon)^{-1} \alpha^{-1}_i x_i]
\end{equation}

\noindent for each $n \geq N$ and $i = 1 , \dots, k$, Then

\begin{equation}
\frac{1}{1+\varepsilon}\operatorname{conv}(\alpha E) \subset \operatorname{conv}(\alpha_nE) \subset \frac{1}{1-\varepsilon}\operatorname{conv}(\alpha E).
\end{equation}

\noindent Passing to Minkowski functionals, we obtain

\begin{equation}\label{technical:alpha inq}
(1-\varepsilon)\rho(\,\cdot\,;\alpha E) \leq \rho(\,\cdot\,;\alpha_nE) \leq (1+\varepsilon)\rho(\,\cdot\,;\alpha E),
\end{equation}

\noindent for each $n \geq N$. Evaluating at $x_i$, and using Lemma \ref{lemma properties of E-admissible multiindex} and the $E$-admissibility of $\alpha_n$ for each $n \geq N$, we get

\begin{equation}
(1-\varepsilon)\rho(x_i;\alpha E) \leq \rho(x_i;\alpha_nE) =\alpha_i^n\leq(1+\varepsilon)\rho(x_i;\alpha E).
\end{equation}

\noindent Letting $n \to \infty$, we obtain

\begin{equation}
(1-\varepsilon)\rho(x_i;\alpha E)\leq\alpha_i\leq(1+\varepsilon)\rho(x_i;\alpha E).
\end{equation}

\noindent Since $\varepsilon\in(0,1)$ is arbitrary, it follows that

\begin{equation}
\rho(x_i;\alpha E)=\alpha_i,  \qquad i =1,\dots,k.
\end{equation}

\noindent Hence, by Lemma \ref{lemma properties of E-admissible multiindex}, the multiindex $\alpha$ is $E$-admissible. Moreover, for each fixed $x\in X$, from (\ref{technical:alpha inq}) we have

\begin{equation}
(1-\varepsilon)\rho(x;\alpha E)\leq\rho(x;\alpha_nE)\leq(1+\varepsilon)\rho(x;\alpha E)
\end{equation}

\noindent for all $n \geq N$. Then

\begin{equation}
\rho(x;\alpha_nE)\to \rho(x;\alpha E).
\end{equation}
\end{proof}

The following result shows that each finite-dimensional model is complete with respect to the metric $d$.

\begin{theorem}\label{theorema: R'(E) is d_H-complete}
Let $X$ be a finite-dimensional Banach space and $E$ be a direction set of cardinality $2k$. Then $(\mathcal{R}'(E), d_H)$ and $(\mathcal{N}'(E),d)$ are complete metric spaces.
\end{theorem}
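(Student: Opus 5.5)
The proof will show that $(\mathcal{R}'(E),d_H)$ is complete. Completeness of $(\mathcal{N}'(E),d)$ then follows at once from Theorem \ref{theorem isometric models}, since $\phi'_E$ is a bijective isometry and completeness is preserved by surjective isometries.

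The first step is to embed $\mathcal{R}'(E)$ isometrically into $(\mathbb{P}(\mathbb{R}^k_{>0}),d_H)$. The metric $d_H$ on $\mathcal{R}'(E)$ is the restriction of the Hilbert metric, and $\mathcal{R}(E)\subset\mathbb{R}^k_{>0}$ is invariant under positive scalings by Lemma \ref{lemma parameter sets are cones}. Thus $\mathcal{R}'(E)$ is a subset of $\mathbb{P}(\mathbb{R}^k_{>0})$. By Proposition \ref{proposition completeness of projective positive cone} the ambient space is complete, so it suffices to show that $\mathcal{R}'(E)$ is closed in it.

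For closedness, take a $d_H$-Cauchy sequence $(\overline{\alpha}_n)$ in $\mathcal{R}'(E)$. It converges in $\mathbb{P}(\mathbb{R}^k_{>0})$ to some $\overline{\alpha}$. Choose representatives normalized by $\sum_i\alpha_i^n=1$ and $\sum_i\alpha_i=1$. Each $\alpha_n$ is still $E$-admissible, since scaling preserves admissibility by Lemma \ref{lemma parameter sets are cones}. Proposition \ref{proposition normalized representatives converge} gives $\alpha_i^n\to\alpha_i$ for every $i$, and the limit satisfies $\alpha_i>0$ because $\overline{\alpha}$ lies in the open positive cone. Lemma \ref{lemma:pointwise convergence of alpha implies admissibility} then applies and shows that $\alpha$ is $E$-admissible. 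Hence $\overline{\alpha}\in\mathcal{R}'(E)$, and $\overline{\alpha}_n\to\overline{\alpha}$ in $d_H$ within $\mathcal{R}'(E)$.

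I do not expect a serious obstacle. The one point that needs care is positivity of the limit coordinates: the completeness of the ambient projective cone, not mere coordinatewise convergence, is what guarantees $\alpha>0$, and that positivity is exactly the hypothesis Lemma \ref{lemma:pointwise convergence of alpha implies admissibility} requires. It is also worth stating explicitly that $d$ on $\mathcal{N}'(E)$ is the metric restricted from $\mathcal{N}'(X)$, so that the isometry transfers completeness directly.
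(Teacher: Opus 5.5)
Your proposal is correct and follows essentially the same route as the paper: it transfers completeness through the bijective isometry $\phi'_E$, then shows $\mathcal{R}'(E)$ is closed in the complete space $(\mathbb{P}(\mathbb{R}^k_{>0}),d_H)$. For closedness it passes to sum-normalized representatives, uses the cone property of $\mathcal{R}(E)$ to keep them admissible, and invokes the pointwise-convergence lemma with the positivity of the limit $\alpha$, which you correctly attribute to $\overline{\alpha}$ lying in $\mathbb{P}(\mathbb{R}^k_{>0})$.
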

\begin{proof}
By Theorem \ref{theorem isometric models} the families $(\mathcal{R}'(E), d_H)$ and $(\mathcal{N}'(E) , d)$ are isometric, thus it is enough to prove the completeness of $(\mathcal{R}'(E), d_H)$.

Since $(\mathcal{R}'(E), d_H) \subset (\mathbb{P}(\mathbb{R}^k_{>0}) , d_H)$ and $(\mathbb{P}(\mathbb{R}^k_{>0}) , d_H)$ is complete by Proposition \ref{proposition completeness of projective positive cone}, it is enough to prove that $\mathcal{R}'(E)$ is closed in $\mathbb{P}(\mathbb{R}^k_{>0})$.

Let $(\overline{\alpha}_n)$ be a sequence in $(\mathcal{R}'(E),d_H)$ converging to $\overline{\alpha}$. By Proposition \ref{proposition normalized representatives converge} we can choose normalized representatives $\alpha_n=(\alpha_1^n,\dots,\alpha_k^n)\in \overline{\alpha}_n$ and $\alpha=(\alpha_1,\dots,\alpha_k)$ such that 

\begin{equation}
\lim_{n \to \infty} \alpha_i^n = \alpha_i , \qquad i = 1 , \dots, k.
\end{equation}

\noindent By Lemma \ref{lemma parameter sets are cones}, $\mathcal{R}(E)$ is a cone, then every representative $\alpha_n$ is $E$-admissible. Since $\alpha \in \mathbb{P}(\mathbb{R}^k_{>0})$, we have $\alpha_i > 0$ for each $i = 1 , \dots , k$. By Lemma \ref{lemma:pointwise convergence of alpha implies admissibility} the multiindex $\alpha$ is $E$-admissible. That is, $\overline{\alpha} \in \mathcal{R}'(E)$. Therefore $(\mathcal{R}'(E),d_H)$ is complete.
\end{proof}

\begin{lemma}\label{lemma coordinate convergence preserves fan admissibility}
Let $X$ be a $d$-dimensional Banach space, $E$ be a direction set and $\mathcal{F}$ be a complete, symmetric, simplicial $E$-fan. Let $(\alpha_n)$ be a sequence in $\mathcal{R}(\mathcal{F})$ such that

\begin{equation}
\alpha_n(x_i)\to \alpha(x_i)>0, \qquad x_i\in E.
\end{equation}

\noindent Then $\alpha\in\mathcal{R}(\mathcal{F})$.
\end{lemma}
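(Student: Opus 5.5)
The plan is to check the defining inequalities of $E$-admissibility of the pair $(\mathcal{F},\alpha)$ directly, by passing to the limit in the corresponding inequalities for $(\mathcal{F},\alpha_n)$. Symmetry of the limit weight is immediate: $\alpha_n(-x)=\alpha_n(x)$ for every $n$, so $\alpha(-x)=\alpha(x)$. Positivity is part of the hypothesis. It remains to produce, for every maximal cone $C=\operatorname{cone}(x_{n_1},\dots,x_{n_d})\in\mathcal{F}$, the functional $\ell_C^\alpha$ and verify $\ell_C^\alpha(x_i)\leq\alpha(x_i)$ for all $x_i\in E$.

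Fix a maximal cone $C$. By Lemma \ref{lema full dimension of generators of maximal cones on fan}, its generators $x_{n_1},\dots,x_{n_d}$ form a basis of $X$. Let $x^*_{1},\dots,x^*_{d}\in X^*$ be the dual basis. The unique functional with prescribed values on the generators is
\begin{equation}
\ell_C^{\beta}=\sum_{k=1}^d \beta(x_{n_k})\,x^*_{k}
\end{equation}
for any weight $\beta$. This depends linearly on the finitely many values $\beta(x_{n_k})$. Hence the coordinate convergence $\alpha_n(x_{n_k})\to\alpha(x_{n_k})$ gives $\ell_C^{\alpha_n}\to\ell_C^{\alpha}$ in $X^*$, and in particular $\ell_C^{\alpha_n}(x_i)\to\ell_C^{\alpha}(x_i)$ for every $x_i\in E$.

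Since each $(\mathcal{F},\alpha_n)$ is $E$-admissible, we have $\ell_C^{\alpha_n}(x_i)\leq\alpha_n(x_i)$ for all $n$ and all $x_i\in E$. Letting $n\to\infty$ on both sides yields $\ell_C^{\alpha}(x_i)\leq\alpha(x_i)$, since non-strict inequalities are preserved in the limit. As $C$ was an arbitrary maximal cone and $E$ is finite, $(\mathcal{F},\alpha)$ is $E$-admissible, so $\alpha\in\mathcal{R}(\mathcal{F})$.

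There is no real obstacle; the only point needing care is the continuity of $\beta\mapsto\ell_C^\beta$. This holds because it is a fixed linear map determined by the fan alone, independent of $n$. The hypothesis $\alpha(x_i)>0$ is used only to ensure the limit is a genuine positive weight. As an optional remark, one could also deduce via Corollary \ref{corollary fan compatibility admissibility equivalence} and Lemma \ref{lemma:pointwise convergence of alpha implies admissibility} that $\rho(\,\cdot\,;\alpha E)$ is compatible with $\mathcal{F}$, but the direct argument above suffices.
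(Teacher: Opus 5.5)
Your proposal is correct and follows essentially the same route as the paper. The paper also fixes a maximal cone and uses that its generators form a basis, so that $\ell_C^{\alpha_n}(x_i)$ is a fixed linear combination of the values $\alpha_n(x_{n_k})$ (your dual-basis formula is its expansion $x_i=\sum_k a_{i,k}x_{n_k}$); it then passes to the limit in $\ell_C^{\alpha_n}(x_i)\leq\alpha_n(x_i)$, and it handles symmetry of the limit weight the same way you do.
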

\begin{proof}
Since each $\alpha_n$ is symmetric, the pointwise limit $\alpha$ is also symmetric. 

Let $C=\operatorname{cone}(x_{n_1},\dots,x_{n_d})$ be a maximal cone of $\mathcal{F}$. Since $\alpha_n\in\mathcal{R}(\mathcal{F})$, for each $n \in \mathbb{N}$ there exists a unique functional $\ell_C^{\alpha_n}$ such that

\begin{equation}
\ell_C^{\alpha_n}(x_{n_k})=\alpha_n(x_{n_k}),\qquad k=1,\dots,d,
\end{equation}

\noindent and

\begin{equation}\label{technical inequality of functionals}
\ell_C^{\alpha_n}(x_i)\leq \alpha_n(x_i), \qquad x_i\in E.
\end{equation}

\noindent Let $\ell_C^\alpha$ be the unique functional satisfying

\begin{equation}
\ell_C^\alpha(x_{n_k})=\alpha(x_{n_k}), \qquad k=1,\dots,d.
\end{equation}

\noindent Since $C$ is maximal and $\mathcal{F}$ is simplicial, by Lemma \ref{lema full dimension of generators of maximal cones on fan} $x_{n_1},\dots,x_{n_d}$ form a basis of $X$. For each $x_i\in E$ we can write $x_i=\sum_{k=1}^d a_{i,k}x_{n_k}$. Hence

\begin{equation}
\ell_C^{\alpha_n}(x_i) = \sum_{k=1}^d a_{i,k}\alpha_n(x_{n_k})\to\sum_{k=1}^d a_{i,k}\alpha(x_{n_k})=\ell_C^\alpha(x_i).
\end{equation}

\noindent Passing to the limit in (\ref{technical inequality of functionals}) we obtain

\begin{equation}
\ell_C^\alpha(x_i)\leq \alpha(x_i), \qquad x_i\in E.
\end{equation}

\noindent Since $C$ was arbitrary, $(\mathcal{F},\alpha)$ is $E$-admissible and $\alpha\in\mathcal{R}(\mathcal{F})$.
\end{proof}

\begin{corollary}
Let $X$ be a finite-dimensional Banach space, $E$ be a direction set of cardinality $2k$ and $\mathcal{F}$ be a complete, symmetric, simplicial $E$-fan. Then $(\mathcal{R}'(\mathcal{F}), d_H)$ and $(\mathcal{N}'(\mathcal{F}),d)$ are complete metric spaces.
\end{corollary}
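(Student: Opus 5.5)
The plan follows the proof of Theorem \ref{theorema: R'(E) is d_H-complete}, with Lemma \ref{lemma coordinate convergence preserves fan admissibility} in place of Lemma \ref{lemma:pointwise convergence of alpha implies admissibility}.

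First, we reduce to the coordinate model. By Theorem \ref{theorem isometric models}, $\phi'_{\mathcal F}:(\mathcal{R}'(\mathcal{F}),d_H)\to(\mathcal{N}'(\mathcal{F}),d)$ is a bijective isometry, and completeness is preserved under isometric bijections. So it suffices to prove that $(\mathcal{R}'(\mathcal{F}),d_H)$ is complete. Via Remark \ref{remark:weight multiindex identification}, $\mathcal{R}'(\mathcal{F})$ is a subset of $\mathbb{P}(\mathbb{R}^k_{>0})$, which is complete by Proposition \ref{proposition completeness of projective positive cone}. It therefore suffices to show that $\mathcal{R}'(\mathcal{F})$ is closed in $\mathbb{P}(\mathbb{R}^k_{>0})$. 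Then any $d_H$-Cauchy sequence in $\mathcal{R}'(\mathcal{F})$ converges in the ambient space, and its limit lies in $\mathcal{R}'(\mathcal{F})$.

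For closedness, take $\overline{\alpha}_n\in\mathcal{R}'(\mathcal{F})$ with $\overline{\alpha}_n\to\overline{\alpha}$ in $\mathbb{P}(\mathbb{R}^k_{>0})$. By Proposition \ref{proposition normalized representatives converge}, choose representatives normalized by $\sum_i\alpha_i^n=1$ and $\sum_i\alpha_i=1$. These satisfy $\alpha_i^n\to\alpha_i$ coordinatewise, and $\alpha_i>0$ because $\overline{\alpha}\in\mathbb{P}(\mathbb{R}^k_{>0})$. By Lemma \ref{lemma parameter sets are cones}, $\mathcal{R}(\mathcal{F})$ is closed under positive scalar multiplication, so the normalized representative $\alpha_n$ lies in $\mathcal{R}(\mathcal{F})$ and not merely in its class.

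Next, extend the weights symmetrically to $E$ by $\alpha_n(-x_i)=\alpha_n(x_i)$. Then $\alpha_n(x)\to\alpha(x)>0$ for every $x\in E$. Lemma \ref{lemma coordinate convergence preserves fan admissibility} now gives $\alpha\in\mathcal{R}(\mathcal{F})$, hence $\overline{\alpha}\in\mathcal{R}'(\mathcal{F})$, which proves closedness.

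There is no serious obstacle. The only points needing care are that normalized representatives remain in $\mathcal{R}(\mathcal{F})$, which is exactly the cone property, and that the limit has strictly positive coordinates, which is automatic in $\mathbb{P}(\mathbb{R}^k_{>0})$.
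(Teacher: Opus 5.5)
Your proof is correct and follows essentially the same argument as the paper. You use normalized representatives, the cone property of $\mathcal{R}(\mathcal{F})$, and the lemma on coordinate convergence preserving fan admissibility, then transfer completeness through the isometry $\phi'_{\mathcal{F}}$; the only difference is that you prove closedness directly in $\mathbb{P}(\mathbb{R}^k_{>0})$ rather than in $\mathcal{R}'(E)$, which spares you the appeal to the completeness theorem for $\mathcal{R}'(E)$.
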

\begin{proof}
By Theorem \ref{theorema: R'(E) is d_H-complete}, $(\mathcal{R}'(E), d_H)$ is complete. Since $(\mathcal{R}'(\mathcal{F}), d_H) \subset (\mathcal{R}'(E), d_H)$, then it is enough to prove the closedness of $(\mathcal{R}'(\mathcal{F}), d_H)$.

Let $\overline{\alpha}_n$ be a sequence in $(\mathcal{R}'(\mathcal{F}), d_H)$ converging to $\overline{\alpha} \in (\mathcal{R}'(E) , d_H)$. By Proposition \ref{proposition normalized representatives converge} we can choose normalized representatives $\alpha_n \in \overline{\alpha}_n$ and $\alpha \in \overline{\alpha}$ such that 

\begin{equation}
\lim_{n\to \infty} \alpha_i^n = \alpha_i , \qquad i =1, \dots, k.
\end{equation}

\noindent $\mathcal{R}(\mathcal{F})$ is a cone by Lemma \ref{lemma parameter sets are cones}, thus we may assume that every $\alpha_n \in \mathcal{R}(\mathcal{F})$. By Lemma \ref{lemma coordinate convergence preserves fan admissibility} we have that $\alpha \in \mathcal{R}(\mathcal{F})$. Hence $(\mathcal{R}'(\mathcal{F}), d_H)$ is complete.

Since $(\mathcal{R}'(\mathcal{F}), d_H)$ and $(\mathcal{N}'(\mathcal{F}),d)$ are isometric by Theorem \ref{theorem isometric models}, we have that $(\mathcal{N}'(\mathcal{F}),d)$ is complete.
\end{proof}

The next theorem shows that the finite polyhedral models are dense in the global space of equivalent norms.

\begin{theorem}
Let $X$ be a finite-dimensional Banach space. Then the union of the spaces $\mathcal{N}'(E)$, where $E$ ranges over all direction sets, is $d$-dense in $\mathcal{N}'(X)$.
\end{theorem}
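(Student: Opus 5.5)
The plan is to approximate an arbitrary norm by the Minkowski functional of the convex hull of finitely many points taken on its unit sphere. Fix $\overline{p}\in\mathcal{N}'(X)$ with representative $p\in\mathcal{N}(X)$, and fix $\varepsilon\in(0,1)$. The unit sphere $S_p$ is compact in the finite-dimensional space $X$, so it has a finite $\varepsilon$-net with respect to $p$. Choose such a net $\{v_1,\dots,v_n\}\subset S_p$ and symmetrize it, so that it is closed under $v\mapsto -v$. Put $P=\operatorname{conv}\{\pm v_1,\dots,\pm v_n\}$. Then $P\subset B_p$ by convexity, and $P$ is a symmetric polytope.

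The key step is the reverse inclusion $(1-\varepsilon)B_p\subset P$, which I would prove by the standard polar argument. Suppose, for contradiction, that there is $y\in S_p$ with $(1-\varepsilon)y\notin P$. By Hahn--Banach there is a functional $f$ with $f\le c$ on $P$ and $f((1-\varepsilon)y)>c$. Normalize so that $\max_{B_p}f=1$; this is attained at some $z\in S_p$. Since $P\subset B_p$, we have $c\le 1$. Take $v_j$ with $p(z-v_j)\le\varepsilon$. Then
\[
f(v_j)\ge f(z)-\varepsilon=1-\varepsilon,
\]
so $c\ge 1-\varepsilon$. On the other hand $f((1-\varepsilon)y)\le 1-\varepsilon\le c$, which is a contradiction. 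Hence $(1-\varepsilon)B_p\subset P\subset B_p$.

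Two consequences follow. First, $P$ has nonempty interior, so $q=\rho(\cdot\,;P)$ is a polyhedral norm. Second, passing to Minkowski functionals gives
\[
p\le q\le (1-\varepsilon)^{-1}p,
\]
and therefore
\[
d(\overline{p},\overline{q})\le \log\frac{1}{1-\varepsilon}\to 0 .
\]

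It remains to place $\overline{q}$ in some $\mathcal{N}'(E)$, which I would do exactly as in the proof of Corollary \ref{corollary representation of polyhedral norms}. With respect to the ambient norm $\|\cdot\|$ of $X$, set $x_i=\|v_i\|^{-1}v_i$ and $E=\{\pm x_1,\dots,\pm x_n\}$. Then $E\subset S_X$, $E$ is finite and symmetric, and $\operatorname{conv}(E)$ has nonempty interior because $\operatorname{cone}$-wise it spans the same directions as $P$. Thus $E$ is a direction set. Moreover $\operatorname{Ext}(P)\subset\{\pm v_i\}\subset\operatorname{Dir}(E)$, so $q\in\mathcal{N}(E)$ and hence $\overline{q}\in\mathcal{N}'(E)$.

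The main obstacle is the inclusion $(1-\varepsilon)B_p\subset P$, since this is the only non-formal step. If the separation argument turns out to be awkward, I would instead first approximate $B_p$ by a polytope $P$ satisfying $(1-\varepsilon)B_p\subset P\subset B_p$, using the density of polytopes in the Hausdorff metric together with the compactness of $S_p$. The remaining steps are then direct applications of Theorem \ref{theorem fixed E representation}.
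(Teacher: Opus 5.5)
Your proof is correct. Its skeleton matches the paper's: approximate the given norm by a polyhedral norm, then place that norm in some $\mathcal{N}(E)$ by normalizing its vertex directions with respect to the ambient norm of $X$, exactly as in Corollary~\ref{corollary representation of polyhedral norms}.

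The real difference is the approximation step. The paper invokes, without proof or reference, the uniform density of polyhedral norms on the unit sphere. From this it gets the two-sided estimate $(1-\varepsilon)\|x\|\le \rho(x;\alpha E)\le (1+\varepsilon)\|x\|$ and the bound $\log\frac{1+\varepsilon}{1-\varepsilon}$. Your fallback via Hausdorff density of polytopes is essentially that route.

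Your main argument instead proves the density. The polytope spanned by a symmetric $\varepsilon$-net of $S_p$ lies inside $B_p$, and your separation argument shows it contains $(1-\varepsilon)B_p$. This gives the one-sided estimate $p\le q\le (1-\varepsilon)^{-1}p$ and the slightly sharper bound $\log\frac{1}{1-\varepsilon}$. It makes the theorem self-contained. It also has a small bonus: since $v_i\in P\subset B_p$ and $p(v_i)=1$, you get $q(v_i)=1$. So by Lemma~\ref{lemma properties of E-admissible multiindex} the admissible coordinates are explicit, $\alpha_i=\|v_i\|^{-1}$, and you know exactly which point of $\mathcal{R}(E)$ you land on.

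Two points of polish. First, in the separation step take $c=\max_P f$, and note that $f\neq 0$ because $0\in P$ forces $f((1-\varepsilon)y)>c\ge 0$; this is what permits the normalization $\max_{B_p}f=1$. The claim $c\le 1$ is not needed, since $c\ge f(v_j)\ge 1-\varepsilon\ge f((1-\varepsilon)y)>c$ is already the contradiction. Second, justify the nonempty interior of $\operatorname{conv}(E)$ as follows: $P$ has nonempty interior, so the $v_i$, and hence the $x_i$, span $X$. The symmetric convex hull of a spanning set contains the cross-polytope on a basis.
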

\begin{proof}
Let $\overline{\| \cdot \|} \in \mathcal{N}'(X)$, $\| \cdot \| \in \overline{\| \cdot \|}$, $\varepsilon_0> 0$ and $0 < \varepsilon < 1$ such that

\begin{equation}
\log\left(\frac{1 + \varepsilon}{1 - \varepsilon}\right)< \varepsilon_0
\end{equation}

\noindent Since polyhedral norms are uniformly dense in the class of equivalent norms on finite-dimensional spaces, there exists a polyhedral norm $p$ such that 

\begin{equation}\label{uniform distance bound for polyhedral approximation}
\sup \{ | p( x) - \| x \| \mid x \in S_{\| \cdot \|} \} < \varepsilon.
\end{equation}

\noindent By Corollary \ref{corollary representation of polyhedral norms} there exists a direction set $E$ and an $E$-admissible multiindex $\alpha$ such that
 
\begin{equation}
p(x) = \rho(x ; \alpha E)
\end{equation} 

\noindent for each $x \in X$. Then by (\ref{uniform distance bound for polyhedral approximation}) we have

\begin{equation}
1 - \varepsilon \leq \rho(x ; \alpha E) \leq 1 + \varepsilon
\end{equation}

\noindent for each $x \in S_{\| \cdot \|}$. Hence by the homogeneity of the norm

\begin{equation}
(1 - \varepsilon) \| x \| \leq \rho(x ;\alpha E) \leq (1 + \varepsilon) \| x \|.
\end{equation}

\noindent Finally

\begin{equation}
d\bigl(\overline{\rho(\, \cdot \, ;\alpha E)},\overline{\| \cdot \|}\bigr) \leq \log\left(\frac{1 + \varepsilon}{1 - \varepsilon}\right) < \varepsilon_0.
\end{equation}
\end{proof}

\subsection{Final comments}
The preceding results show that the finite polyhedral models introduced in this paper play two complementary roles. On the one hand, for each fixed direction set $E$, the space $\mathcal{N}'(E)$ is a complete metric submodel of $\mathcal{N}'(X)$, and its metric structure is exactly described by the Hilbert projective metric on the admissible coordinate space $\mathcal{R}'(E)$. On the other hand, as $E$ ranges over all direction sets, these complete finite models form a dense family in $\mathcal{N}'(X)$.

Thus the global space of equivalent norms can be approached through finite-dimensional polyhedral data: directions, admissible radial weights, and compatible fans. The direction set records the possible locations of the extreme points of the unit ball, the admissible weights provide non-redundant coordinates, and the fan describes the local regions of linearity of the corresponding polyhedral norm. In this sense, the models $\mathcal{N}'(E)$ and $\mathcal{N}'(\mathcal{F})$ provide finite metric charts for the study of $\mathcal{N}'(X)$.

\end{document}